\documentclass[10pt]{amsart}
\usepackage{amsmath}
\usepackage[english,  activeacute]{babel}
\usepackage[latin1]{inputenc}
\usepackage{amssymb}
\usepackage{amsthm}
\usepackage{graphics,graphicx}
\usepackage{array}
\usepackage{a4wide}
\usepackage{cite}
\allowdisplaybreaks
\setcounter{tocdepth}{3}
\usepackage{color, url}
\usepackage{float}

\theoremstyle{plain}
\newtheorem{theorem}{Theorem}

\newtheorem{lemma}[theorem]{Lemma}
\theoremstyle{definition}

\newtheorem{conjecture}[theorem]{Conjecture}

\thispagestyle{empty}
\setlength{\parindent}{0pt}

\date{\today}

\title[Equivalence of the descents statistic]{Equivalence of the descents statistic on some (4,4)-avoidance classes of permutations }

\begin{document}

\author[T. Mansour]{Toufik Mansour}
\address{Department of Mathematics, University of Haifa,
3498838 Haifa, Israel}
\email{tmansour@univ.haifa.ac.il}
\author[M. Shattuck]{Mark Shattuck}
\address{Department of Mathematics, University of Tennessee,
37996 Knoxville, TN}
\email{shattuck@math.utk.edu}

\begin{abstract} In this paper, we compute and demonstrate the equivalence of the joint distribution of the first letter and descent statistics on six avoidance classes of permutations corresponding to two patterns of length four.  This distribution is in turn shown to be equivalent to the distribution on a restricted class of inversion sequences for the statistics that record the last letter and number of distinct positive letters, affirming a recent conjecture of Lin and Kim.  Members of each avoidance class of permutations and also of the class of inversion sequences are enumerated by the $n$-th large Schr\"{o}der number and thus one obtains a new bivariate refinement of these numbers as a consequence.  We make use of auxiliary combinatorial statistics, special generating functions (specific to each class) and the kernel method to establish our results.  In some cases, we utilize the conjecture itself in a creative way to aid in solving the system of functional equations satisfied by the associated generating functions.
\end{abstract}
\subjclass[2010]{05A15, 05A05}
\keywords{pattern avoidance, combinatorial statistic, kernel method, descent statistic}

\maketitle

\section{Introduction}

A permutation $\pi=\pi_1\cdots \pi_n \in \mathcal{S}_n$ is said to \emph{contain} $\rho=\rho_1\cdots \rho_m \in \mathcal{S}_m$ where $m \leq n$ if there is some subsequence of $\pi$ that is order isomorphic to $\rho$. That is, there exist indices $i_1<i_2<\cdots<i_m$ such that $\pi_{i_a}>\pi_{i_b}$ if and only if $\rho_a>\rho_b$ for all $a$ and $b$.  Otherwise, $\pi$ is said to \emph{avoid} $\rho$.  In this context, $\rho$ is referred to as a \emph{pattern}.  We say that $\pi$ avoids the set $K$ of patterns if it avoids each pattern in $K$ and denote by $\mathcal{S}_n(K)$ the subset of $\mathcal{S}_n$ whose members avoid $K$.  The study of pattern avoidance in permutations has been as object of considerable attention in recent decades and the notion has been extended to several other finite discrete structures (see, e.g., \cite{Kit} and references contained therein).

An \emph{inversion}  within $\pi=\pi_1\cdots \pi_n \in \mathcal{S}_n$ is an ordered pair $(a,b)$ where $a,b \in [n]=\{1,2,\ldots,n\}$ with $a<b$ and $\pi_a>\pi_b$.  The \emph{inversion sequence} of $\pi$ is defined by $x=x_1\cdots x_n$, where $x_i$ for each $i \in [n]$ records the number of entries to the right of the letter $i$ and smaller than $i$.  For example, if $\pi=621543 \in \mathcal{S}_6$, then $x=010125$. Note that the characterizing property of such sequences $x$ is $0 \leq x_i \leq i-1$ for all $i$.  The systematic study of patterns in inversion sequences is a topic that has only recently been initiated in \cite{CMSW,MaSh}.

Let $I_n$ denote the set of all inversion sequences of length $n$ and $I_n(\geq,-,>)$ the subset of $I_n$ consisting of those $e=e_1\cdots e_n$ in which there exist no indices $i<j<k$ such that both $e_i \geq e_j$ and $e_i>e_k$ hold.  See \cite{LK} or \cite{MarSav} for an explanation of this notation.
By an \emph{ascent} (\emph{descent}) within a sequence $s=s_1s_2\cdots s_n$, we mean an index $i \in [n-1]$ such that $s_i<s_{i+1}$ ($s_i>s_{i+1}$, respectively).  Let $\text{asc}(s)$ ($\text{desc}(s)$) denote the number of ascents (descents) of $s$.  If $s$ is a sequence having non-negative integral entries, then let $\text{dist}(s)$ denote the number of distinct positive letters appearing in $s$.

Lin and Kim made the following conjecture which provides a connection between inversion sequences and pattern avoidance in permutations.
\begin{conjecture}(Lin and Kim \cite{LK}.)
Let $(\nu,\mu)$ be a pair of patterns of length four. Then
$$\sum_{\pi \in \mathcal{S}_n(\nu,\mu)}q^{\text{asc}(\pi)}v^{\text{last}(\pi)}=
\sum_{e \in I_n(\geq,-,>)}q^{\text{dist}(e)}v^{\text{last}(e)+1}$$
for the following six pairs $(\nu,\mu)$:
\begin{align*}
&(4231,3241),\, (4231,2431),\,(4231,3421),\, (2431,3241),\, (3421,2431),\, (3421,3241).
\end{align*}
\end{conjecture}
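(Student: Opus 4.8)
The plan is to compute, for each of the six permutation classes and for the inversion-sequence class $I_n(\geq,-,>)$, an explicit trivariate generating function in the variables $x$ (marking the length), $q$, and $v$, and then to observe that all seven generating functions coincide. Since one checks directly that each of these classes is enumerated by the large Schr\"{o}der numbers when $q=v=1$ (for the inversion sequences, see \cite{MarSav}), the common generating function must be the Schr\"{o}der generating function, and the bivariate refinement mentioned in the abstract follows immediately.

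\emph{Normalization.} Using the reversal map $\pi=\pi_1\cdots\pi_n\mapsto\pi_n\cdots\pi_1$, which interchanges the ascents of a permutation with its descents and its last letter with its first, it suffices to compute the joint distribution of $(\text{desc},\text{first})$ over each reversed class $\mathcal{S}_n(\nu^r,\mu^r)$; working from the left end of a permutation is more convenient for the decompositions to follow. On the inversion-sequence side I would instead work from the right end, appending a new final letter $e_{n+1}\in\{0,1,\ldots,n\}$ and recording how the defining constraint of $I(\geq,-,>)$ limits the admissible values of $e_{n+1}$ in terms of an auxiliary statistic, namely the largest letter of $e$ that is weakly followed later by a strictly smaller one.

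\emph{Functional equations.} For each permutation class I would set up a linear recursive decomposition --- obtained either by inserting the largest letter into a member of $\mathcal{S}_{n-1}$ at an admissible position or by deleting the first letter --- and translate it into a functional equation for a refined generating function $F=F(x,u;q,v)$ carrying one catalytic variable $u$ (and, for the more stubborn classes, a second one) that tracks the auxiliary data needed to make the recursion close: typically the value of the first letter, or the number of sites at which a future insertion cannot create a copy of $\nu$ or $\mu$. The resulting equations take the form $K(x,u)\,F(x,u;q,v)=R(x,u;q,v)+S(x;q,v)$, where $R$ is explicit and $S$ involves only the unknown function with $u$ set to a specific value; the kernel method then applies, one chooses $u=u_0(x)$ so that $K\equiv 0$, the resulting relation pins down the specialized unknowns, and $F$ --- in particular $F(x,1;q,v)$, which is the object we want --- can be read off.

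\emph{Bootstrapping via the conjecture.} For some of the six pairs the decomposition does not close on its own, and it is here that I would use the conjectured identity itself: once the distribution has been determined for one ``anchor'' pair, its generating function may be substituted into the otherwise underdetermined system for a harder pair, turning what looks like circular reasoning into a finite chain of genuine derivations. Finally I would perform the analogous, and comparatively routine, computation for $I_n(\geq,-,>)$ and check that its generating function agrees with the common permutation generating function, thereby establishing the conjecture for all six pairs at once. I expect the main obstacle to lie in the two or three classes whose natural decomposition forces a second catalytic variable: solving a two-variable kernel equation is delicate, and it is precisely there that importing an already-established generating function to eliminate one unknown becomes essential rather than merely a convenience.
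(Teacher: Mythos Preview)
Your proposal is essentially the paper's own strategy: pass to reversals so as to track $(\text{desc},\text{first})$, introduce case-specific catalytic variables (the paper uses a ``height'' statistic on $I_n(\geq,-,>)$ very close to your auxiliary statistic, generating-tree labels for one pair, a generalization parameter for another, and the second-letter statistic for the remaining four), set up and solve the resulting functional equations via the kernel method, and in the hardest case import the conjectured answer as an ansatz to close the system before verifying it. The only notable divergence is tactical---the paper inserts the \emph{smallest} letter rather than the largest in its generating-tree case, and for $(1342,1423)$ it assumes the target formula outright (plus a structural identity) and then checks, rather than bootstrapping from a previously solved pair---but the overall architecture matches yours.
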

Here, we find it more convenient to deal with the reversals of the patterns in each pair and consider alternatively the joint distribution of the descents and first letter statistics on the left-hand side above.  Also, we will represent an inversion sequence $e=e_1\cdots e_n$ using positive instead of non-negative entries (which can be achieved by adding one to each entry) so that $1 \leq e_i \leq i$ for all $i$.  Then $\text{dist}(e)$ would need to be replaced by $\text{dist}(e)-1$ and $\text{last}(e)+1$ by $\text{last}(e)$ on the right side of Conjecture 1.

The main objective of this paper is then to prove the following result.

\begin{theorem}\label{introtheor}
We have
\begin{align}
\sum_{\pi \in \mathcal{S}_n(\sigma,\tau)}q^{\text{desc}(\pi)}v^{\text{first}(\pi)}=
\sum_{e \in I_n(\geq,-,>)}q^{\text{dist}(e)-1}v^{\text{last}(e)}, \qquad n \geq 1, \label{introtheore1}
\end{align}
if and only if $(\sigma,\tau)$ is one of the following pattern pairs:
\begin{align*}
&(1243,1324),\, (1243,1342),\,(1243,1423),\, (1324,1342),\, (1324,1423),\, (1342,1423),
\end{align*}
where the inversion sequence $e$ is expressed using positive integers.
\end{theorem}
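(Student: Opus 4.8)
The plan is to prove \eqref{introtheore1} by computing both sides as explicit generating functions and comparing them. Write
\[
G(x)=G(x;q,v)=\sum_{n\geq 1}x^n\sum_{e\in I_n(\geq,-,>)}q^{\text{dist}(e)-1}v^{\text{last}(e)}
\]
for the right-hand side and, for a pair $(\sigma,\tau)$ of length-four patterns,
\[
F_{\sigma,\tau}(x)=F_{\sigma,\tau}(x;q,v)=\sum_{n\geq 1}x^n\sum_{\pi\in\mathcal{S}_n(\sigma,\tau)}q^{\text{desc}(\pi)}v^{\text{first}(\pi)}.
\]
First I would obtain a closed form for $G$; then, for each of the six pairs named in the statement, I would derive a system of functional equations for $F_{\sigma,\tau}$ (together with finitely many auxiliary series), solve it, and verify that $F_{\sigma,\tau}=G$. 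A separate finite computation settles the ``only if'' direction.

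To compute $G$, I would build up the members of $I_n(\geq,-,>)$ by appending a last letter, recording the length, the final letter, and $\text{dist}(e)$. The defining condition of $I_n(\geq,-,>)$ imposes a rigidity --- once a letter weakly below some earlier letter $e_i$ appears, every letter after it must be at least $e_i$ --- which limits the admissible new last letters to a range governed by the current final letter. This produces a functional equation for $G$ in one catalytic variable marking $\text{last}(e)$; applying the kernel method yields $G$ in closed form, and setting $q=v=1$ recovers the generating function $\sum_{n\geq 1}r_nx^n$ of the large Schr\"{o}der numbers $r_n$, consistent with the stated enumeration.

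On the permutation side, I would decompose $\pi\in\mathcal{S}_n(\sigma,\tau)$ by its first letter $\pi_1$ and the relative order of the letters exceeding $\pi_1$. (In the reversed formulation of the Lin--Kim conjecture, $\pi_1$ plays the role that the last letter played originally, and every pattern in each of the six pairs begins with $1$, so this is the natural place to split.) This introduces one or two catalytic variables and yields, for each pair, a system of functional equations relating $F_{\sigma,\tau}$ to a few auxiliary generating functions; I would solve each such system by the kernel method. The six pairs split into a small number of groups according to which auxiliary statistics are required. For the more stubborn pairs, rather than solving the system outright I would substitute the candidate $G$ --- already established for an easier pair --- as an ansatz in order to close the system; this is the ``creative use of the conjecture'' referred to in the abstract. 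In every case the upshot is $F_{\sigma,\tau}=G$, which proves \eqref{introtheore1} for the six listed pairs.

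For the converse, note that the right-hand side of \eqref{introtheore1} at $q=v=1$ is $\sum_{n\geq 1}r_nx^n$, so any pair $(\sigma,\tau)$ satisfying \eqref{introtheore1} must have $|\mathcal{S}_n(\sigma,\tau)|=r_n$ for all $n$; by the known classification of pairs of length-four patterns whose avoidance class is enumerated by the large Schr\"{o}der numbers, only finitely many Wilf types survive. For each of these I would tabulate the joint distribution of $(\text{desc},\text{first})$ for $n\le 6$ and check that every pair other than the six named already disagrees with the target distribution, which together with the ``if'' direction completes the proof. I expect the permutation-side step to be the main obstacle: the functional-equation systems are nonlinear and carry more than one catalytic variable, and the real work is to choose auxiliary statistics that make each system close and then susceptible to the kernel method --- which is precisely where feeding back the already-proven cases, or $G$ itself, as partial information becomes essential.
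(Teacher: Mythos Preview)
Your outline matches the paper's strategy: compute the inversion-sequence generating function, then for each of the six pairs set up and solve a functional-equation system (via the kernel method, and in the hardest case by feeding the answer back in as an ansatz), and dispose of the converse by a finite check at $n=6$.

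Two points where the paper needed more than you suggest. First, on the inversion-sequence side your claim that the admissible new last letter is ``governed by the current final letter'' is not quite right: the constraint is that the appended letter must be at least $\max\{e_i:e_i\ge e_{i+1}\}$, which depends on the whole history, not just $e_n$. The paper handles this by introducing an auxiliary \emph{height} statistic $\text{hght}(e)=\max\{e_i:e_i\ge e_{i+1}\}$ and working with two catalytic variables $(v,w)$ tracking $(\text{last},\text{hght})$; the resulting system in $U^+,U^-,U^0$ is then solved and specialized at $w=1$. A single catalytic variable does not close this recursion.

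Second, on the permutation side the auxiliary statistics are not uniform across the six pairs and are not simply ``the relative order of the letters exceeding $\pi_1$''. For $(1324,1423)$ the paper uses a generating-tree analysis with the pair (number of active sites, number of active descents); for $(1342,1423)$ it passes to a genuinely larger family $\mathcal{D}_n(i,m)$ (permutations in which the top $m$ letters decrease) and an indecomposability notion, and this is the case where the conjectured answer is injected as assumptions $\mathbf{P_1},\mathbf{P_2}$ to close the system; for the remaining four pairs the second-letter statistic is what makes the recursions close. So your plan is right in spirit, but the ``choose auxiliary statistics that make each system close'' step is where essentially all the work lies, and the choices are case-specific rather than following a single template.
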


Combining the results that are proven in the subsequent sections implies Theorem \ref{introtheor}; note that in several instances, the specific result in question follows from a more general one.  We remark that the ``only if'' direction in Theorem \ref{introtheor} follows  from numerical evidence (upon considering the case $n=6$), which eliminates all other possible pairs $(\sigma,\tau)$ of patterns of length four.

\begin{table}[htp]
\begin{tabular}{|l|l||l|l||l|l|} \hline
  Pattern Pair & Reference & Pattern Pair & Reference & Pattern Pair & Reference \\\hline\hline
  1243,1324  &  \text{Theorem}~\ref{Th1324x1243}&
  1243,1342  &  \text{Theorem}~\ref{Th1342x1243}&
  1243,1423   & \text{Theorem}~\ref{Th1423x1243}\\\hline
  1324,1342   &  \text{Theorem}~\ref{Th11324x1342}&
  1324,1423   &  \text{Theorem}~\ref{Rgenfun}&
  1342,1423   &   \text{Theorem}~\ref{th13421423a}\\\hline
  $I_n(\geq,-,>)$  &  \text{Theorem}~\ref{UTheor}&
    &  &
  &   \\\hline
 \end{tabular}
\caption{Places where specific cases of Conjecture 1 are proven.}\label{maintable}
\end{table}

Let $S_{n,k}$  be given by
$$S_{n,k}=S_{n,k-1}+2S_{n-1,k}-S_{n-1,k-1}, \qquad 1 \leq k \leq n-2,$$
with $S_{n,n}=S_{n,n-1}=S_{n,n-2}$ for $n\geq 3$ and $S_{1,1}=S_{2,1}=S_{2,2}=1$.
One can show that $\sum_{k=1}^nS_{n,k}=S_n$, the $n$-th large Schr\"{o}der number (see \cite[A006318]{Slo}).   In \cite{MaSh2}, it was proven that $|\mathcal{S}_{n,i}(\sigma,\tau)|=S_{n,i}$ holds for nine pairs of patterns of length four (and no others), including the six listed in Theorem \ref{introtheor}, where $\mathcal{S}_{n,i}(\sigma,\tau)$  denotes the subset of $\mathcal{S}_n(\sigma,\tau)$ whose members start with $i$.  Here, a more technical argument is required to deal with the joint distribution of the first letter and descents statistics on $\mathcal{S}_n(\sigma,\tau)$ for the various pairs $(\sigma,\tau)$ and does not reduce to prior arguments when $q=1$.  To establish Theorem \ref{introtheor}, we show in each case that both sides of  equality \eqref{introtheore1} have the same (ordinary) generating function by a computational approach.  To do so, we make use of the kernel method \cite{HouM}, along with some educated guessing (aided at times by the conjecture itself), to solve a system of functional equations satisfied by the associated generating functions in each case. In particular, it is shown for each $(\sigma,\tau)$ that the generating function over $n \geq 1$ of both sides of \eqref{introtheore1} is given by
\begin{align*}
&\frac{vx}{1-vqx}+\frac{xv(vqx-v-x)t(xv)}{2(vq^2x^2-vqx^2-qx+vx-v-x+1)(vqx-vx-1)}\\
&+\frac{(x+(qx^2+qx+3x^2-2x-1)v)vx}{2(vqx-vx-1)(vqx-1)(vq^2x^2-vqx^2-qx+vx-v-x+1)}\\
&-\frac{(2q^2x^2+3q^2x-qx^2-qx-3q+2x-1+(1-2q)(qx-1)vqx)v^3x^2}{2(vqx-vx-1)(vqx-1)(vq^2x^2-vqx^2-qx+vx-v-x+1)},
\end{align*}
where $t(x)=\sqrt{(1-2q)^2x^2-2x(1+2q)+1}$.
Note that the coefficient of $v^iq^j$ in the preceding expression yields a new refinement of $S_n$.  For other extensions of the Schr\"oder numbers, see, e.g., \cite{BSS,Kr,PSu,RSi,Rog}. For other recent results on various restricted classes of inversion sequences, see \cite{AC,Lin,LY,YL,Yan} and references contained therein.

The organization of this paper is as follows.  In the next section, we perform the enumeration on the right side of \eqref{introtheore1} using an additional parameter that we call the height of an inversion sequence.  Indeed, we are able to find the generating function of the joint distribution of the dist, last letter and height statistics on $I_n(\geq,-,>)$.  In the third and fourth sections, we treat the pattern pairs $(\sigma,\tau)=(1324,1423)$ and $(1342,1423)$, respectively, where we proceed by solving a more refined version of the problem in the first case and a generalization of the problem in the latter case.  In the final section, we treat the remaining four pattern pairs by considering in addition the second letter statistic, which is necessary for finding recurrences satisfied by the joint distribution.  These recurrences may then be rewritten in terms of generating functions, leading to a system of functional equations in each case.  We leave the question of finding bijective proofs of the statistical equivalences in Theorem \ref{introtheor} as an open problem.

\section{Statistics on a restricted inversion sequence}

In this section, we determine the joint distribution of the last letter and dist statistics on $\mathcal{U}_n=I_n(\geq,-,>)$.  By the \emph{height} of $e=e_1\cdots e_n \in I_n$, which will be denoted by $\text{hght}(e)$, we mean the greatest $x$ such that $x \geq y$, where $y$ is the successor of $x$.  That is, $\text{hght}(e)=\max\{e_i:e_i\geq e_{i+1} \text{ and } 1 \leq i \leq n-1\}$.  Note that one may restrict attention to $e_i$ corresponding to leftmost occurrences of letters in $e$ when determining $\text{hght}(e)$.

For example, if $n=12$ and $e=12\underline{33}45\underline{74}7789 \in I_{12}$, then $\text{hght}(e)=7$. Note that indeed $e \in \mathcal{U}_n$; to check this quickly, one need only verify that there are no letters in $[2]$ to the right of the second $3$ and no letters in $[6]$ to the right of the second $4$, where we have underlined the adjacencies in $e$ which determine the current height (for example, the height of the partial sequence $123345\in \mathcal{U}_6$ would be 3, which then becomes 7 after the addition of the next two letters 7,4).

Let $\mathcal{U}_{n}(i,j)$ denote the subset of $\mathcal{U}_n$ consisting of those $e$ such that $\text{last}(e)=i$ and $\text{hght}(e)=j$.  Note that $\mathcal{U}_n(i,j)$ can be nonempty only when $1 \leq i \leq n$ and $1 \leq j \leq n-1$.  Further, $e=12\cdots n$ belongs to no subset $\mathcal{U}_{n}(i,j)$ since there is no $x$ such that $x \geq y$ in the description above, and hence will be counted separately.  Observe further that, by the definitions, we have that $\mathcal{U}_n(n,n-1)$ is empty since the first $n-1$ would have to occur in the penultimate position, which implies the last letter must belong to $[n-1]$ in order for a height of $n-1$ to be achieved.

Given $n \geq 2$, $1 \leq i \leq n$ and $1 \leq j \leq n-1$, let $u_n(i,j)=u_n(i,j;q)$ be given by
$$u_n(i,j)=\sum_{e \in \mathcal{U}_n(i,j)}q^{\text{dist}(e)-1},$$
and put zero for $u_n(i,j)$ otherwise.  For example, we have
$$\mathcal{U}_5(2,4)=\{11142,11242,11342,12142,12242,12342\},$$
and thus $u_5(2,4)=4q^2+2q^3$.

The array $u_n(i,j)$ is determined recursively as follows.

\begin{lemma}\label{invseqLem}
If $n \geq 3$, then
\begin{equation}\label{uneqn1}
u_n(i,j)=\delta_{j,n-1}\cdot q^{n-2}+u_{n-1}(j,i)+q\sum_{k=1}^{i-1}(u_{n-2}(j-1,k)+u_{n-1}(j-1,k)), \quad 1\leq i<j\leq n-1,
\end{equation}
\begin{equation}\label{uneqn2}
u_n(i,j)=q\sum_{\ell=1}^{i-1}u_{n-1}(\ell,j), \quad 1 \leq j <i \leq n,
\end{equation}
and
\begin{equation}\label{uneqn3}
u_n(i,i)=\sum_{k=1}^{i-1}u_{n-1}(i,k)+\sum_{\ell=1}^{i}u_{n-1}(\ell,i), \quad 1 \leq i \leq n-2,
\end{equation}
with $u_3(2,2)=q$ and $u_n(n-1,n-1)=q^{n-2}+q\sum_{i=1}^{n-2}\sum_{j=1}^{n-3}u_{n-2}(i,j)$ for $n \geq 4$ and initial values $u_2(1,1)=1$, $u_2(2,1)=0$.
\end{lemma}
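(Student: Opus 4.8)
The plan is to derive the three recurrences and the stated boundary values by decomposing a word $e=e_1\cdots e_n\in\mathcal{U}_n(i,j)$ through removal of its last one or two letters, with the comparison between $i$ and $j$ determining which of the three cases we are in. The essential preliminary is the restatement of the defining avoidance condition: $e\in I_n(\geq,-,>)$ if and only if, whenever $e_a\geq e_b$ for $a<b$, one has $e_c\geq e_a$ for every $c>b$; that is, every entry following a weak descent is at least the top of that descent. From this one extracts the shape of a member of $\mathcal{U}_n(i,j)$. When $i<j$, the height $j$ can only be attained at the adjacency in positions $n-1,n$, forcing $e_{n-1}=j$ (hence $e_{n-1}=j>i=e_n$); moreover every weak-descent top occurring strictly before position $n-1$ is at most $i$, and the entries exceeding $i$ form a contiguous block of positions $m,m+1,\ldots,n-1$ on which $e$ is strictly increasing with $e_{n-1}=j$. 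Thus $e$ decomposes into a prefix $P=e_1\cdots e_{m-1}$ (possibly empty) all of whose entries are $\leq i$, an increasing run $R=e_m\cdots e_{n-1}$ topping out at $j$, and the trailing entry $i$. When $j<i$, or when $i=j$ with $e_{n-1}<e_n$, deleting $e_n$ neither removes nor creates a weak descent, so the height is unchanged; and in these situations the value $i$ cannot occur among $e_1,\ldots,e_{n-1}$ (otherwise the weak-descent argument would push the height above $i$), so the deletion strips away exactly one distinct letter --- this is the source of the factors $q$.

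Equations \eqref{uneqn2} and \eqref{uneqn3} then follow quickly. For \eqref{uneqn2} ($j<i$), deleting $e_n=i$ gives a bijection (weight-preserving after multiplication by $q$) onto $\bigcup_{\ell=1}^{i-1}\mathcal{U}_{n-1}(\ell,j)$, the entry $e_{n-1}=\ell$ being forced to be a strict ascent; the all-increasing word cannot occur as $e_1\cdots e_{n-1}$ here. For \eqref{uneqn3} ($i=j\leq n-2$), deleting $e_n=i$ separates into the case $e_{n-1}=i$, where the height of $e_1\cdots e_{n-1}$ is some $k$ with $1\leq k\leq i$, contributing $u_{n-1}(i,k)$ for $k<i$ and the $\ell=i$ summand $u_{n-1}(i,i)$ of the second sum, and the case $e_{n-1}=\ell<i$, where the height is preserved and contributes $u_{n-1}(\ell,i)$; no factor $q$ arises because $i$ already occurs in $e_1\cdots e_{n-1}$ (as $e_{n-1}$ or as the entry realizing the height), and the all-increasing word is again excluded (it would force $j=n-1$). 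The separate formula for $u_n(n-1,n-1)$ follows by observing that $\text{hght}(e)=n-1=\text{last}(e)$ forces $e_{n-1}=e_n=n-1$, so $e=e_1\cdots e_{n-2}\,(n-1)(n-1)$ with $e_1\cdots e_{n-2}$ either the all-increasing word $12\cdots(n-2)$ (contributing $q^{n-2}$) or an arbitrary member of $\mathcal{U}_{n-2}$ (contributing $q$ times its weight, since $n-1$ is a new letter). The initial values $u_2(1,1)=1$, $u_2(2,1)=0$, $u_3(2,2)=q$ are immediate.

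The real work is equation \eqref{uneqn1} ($i<j$), where the prefix/run/trailing description is used. The all-increasing word $e=12\cdots(n-1)\,i$ occurs precisely when $j=n-1$, has weight $q^{n-2}$, and, since its truncation belongs to no $\mathcal{U}_{n-1}(\cdot,\cdot)$, is recorded by the $\delta_{j,n-1}$ term. For every other $e$, deleting $e_n=i$ gives $e'\in\mathcal{U}_{n-1}$ with $\text{last}(e')=j$ and $\text{hght}(e')\leq i$; when $\text{hght}(e')=i$ this contributes $u_{n-1}(j,i)$ with no factor $q$ (the entry realizing that height is a copy of $i$), and when $\text{hght}(e')=k<i$ one must produce a weight-respecting correspondence --- carrying exactly one extra factor $q$, since then $j$ is a genuinely new maximal letter --- onto $\mathcal{U}_{n-1}(j-1,k)$ or $\mathcal{U}_{n-2}(j-1,k)$, obtained by an appropriate relabeling and deletion near the right end of the word (lowering the recorded last letter to $j-1$ and removing the trailing $i$), the two targets being separated by a structural dichotomy in the $(P,R)$ decomposition. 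The step I expect to be the main obstacle is precisely this: arranging the partition of $\mathcal{U}_n(i,j)$ into the four families --- the $\delta$-term, the $u_{n-1}(j,i)$-term, and the two sums --- so that nothing is omitted or double-counted, and checking that the map feeding the two sums is a genuine bijection which changes $\text{dist}$ by exactly one. The interaction of the trailing entry $i$ with the interior of the run $R$, and the fact that the values in $R$ need not be consecutive, make the admissibility verifications and the dist-accounting genuinely fiddly. Once the recurrences are established for all $n\geq3$ --- the putative $u_1$-terms never actually appearing, as the relevant sums are empty when $i=1$ --- the lemma is proved.
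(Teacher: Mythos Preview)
Your outline matches the paper's approach---delete the final letter and case on the height of the truncation---and your treatments of \eqref{uneqn2} and \eqref{uneqn3} are correct, modulo a slip in the preamble: when $i=j$ and $e_{n-1}<i$, the value $i$ \emph{must} occur among $e_1,\ldots,e_{n-1}$ (it realizes the height), which is precisely why no factor $q$ appears there; you say this correctly in your detailed treatment of \eqref{uneqn3} but contradict it in the preamble.

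The genuine gap is in \eqref{uneqn1} for the case $\text{hght}(e')=k<i$. First, the factor $q$ records the loss of the distinct letter $i$, not $j$; your attribution to ``$j$ being a genuinely new maximal letter'' is wrong. Second, the bijection you allude to must be made explicit, and the paper's version is this: the structural dichotomy is whether the value $i$ lies in the strictly increasing run $R$ (equivalently, whether $i$ occurs among $e_1,\ldots,e_{n-1}$; since $k<i$ it can occur at most once there, namely inside $R$). If $i\notin R$, delete the terminal $i$ and subtract one from every letter exceeding $i$; the result lies in $\mathcal{U}_{n-1}(j-1,k)$. If $i\in R$, delete \emph{both} copies of $i$ and subtract one from every letter exceeding $i$; the result lies in $\mathcal{U}_{n-2}(j-1,k)$. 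In each case exactly one distinct value (namely $i$) is lost, the height $k<i$ is unaffected by the reduction, and the map is inverted by increasing every letter $\geq i$ by one, inserting $i$ at its unique slot in $R$ in the second case, and appending $i$. With this spelled out, your argument is complete and coincides with the paper's.
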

\begin{proof}
The initial values when $n=2$ follow from the definitions, so assume $n \geq 3$.  To show \eqref{uneqn1}, first note that for $e \in \mathcal{U}_{n}(i,j)$ where $i<j$, we claim that the final two letters of $e$ are $j,i$.  To realize this, note that $\text{hght}(w)=j$ for $w \in \mathcal{U}_n$ implies $w$ is expressible as $w=\alpha j\ell \beta$, where $1 \leq \ell \leq j$, $\alpha$ contains no letters greater than $j-1$ and $\beta$ contains no letters in $[j-1]$.  Then $e$ belonging to $\mathcal{U}_n(i,j)$ where $i<j$ implies that the section of $e$ corresponding to $\beta$ is empty, which yields the claim.  Let $e=e'i$, where we assume $e'\neq 12\cdots(n-1)$.   Note that $e'$ has all its letters in $[j-1]$ except for the terminal $j$, for otherwise, $xji$ would occur with $x\geq j>i$ and $j,i$ the final two letters of $e$, which is impossible.  Thus, $\text{hght}(e')=k$ for some $1 \leq k \leq j-1$.  The successor $t$ of the leftmost $k$ in $e'$ must then belong to $[k]$, for otherwise $\text{hght}(e')$ would be strictly greater than $k$.  Thus, we must have $k \leq i$, for otherwise $kti$ would form an occurrence of the forbidden pattern.

If $k=i$, then one may delete the terminal $i$ from $e$ resulting in a member of $\mathcal{U}_{n-1}(j,i)$ and hence there are $u_{n-1}(j,i)$ possibilities.  So assume $k<i$.  In this case, we have $e=\delta ktk^p\rho$, where $1 \leq t \leq k$, $p \geq 0$, $\delta$ is $(k-1)$-ary and $\rho$ contains no letters in $[k]$ and ends in $j,i$.  Further, we have $\rho=\rho^*ji$, where $\rho^*$ if nonempty contains letters in $[k+1,j-1]$ and is strictly increasing (for otherwise $\text{hght}(e')=k$ would be violated).  Then $i>k$ implies $i$ can only occur once or twice in $\rho$ (and hence in $e$).  If $i$ occurs once in $\rho$ (at the end), then deleting $i$ results in a member of $\mathcal{U}_{n-1}(j-1,k)$ after reduction of letters greater than $i$.  Considering all possible $k$ then gives $q\sum_{k=1}^{i-1}u_{n-1}(j-1,k)$, where the $q$ accounts for the additional distinct letter $i$.
If $i$ occurs twice in $\rho$ (once in $\rho^*$ and again at the end), then deleting both $i$ is seen to give
$q\sum_{k=1}^{i-1}u_{n-2}(j-1,k)$ possibilities.  Combining the previous cases on $k$ then implies \eqref{uneqn1} if $j<n-1$.  On the other hand, if $j=n-1$, then $e'=12\cdots(n-1)$ is also possible, and thus there is an additional member of $\mathcal{U}_{n}(i,n-1)$ of weight $q^{n-2}$, which completes the proof of \eqref{uneqn1} in all cases.

To show \eqref{uneqn2}, first note that for $w \in \mathcal{U}_n(i,j)$
where $j<i$, we must have $w=w'jtj^p\rho$, where $w'$ is $(j-1)$-ary, $1 \leq t \leq j$, $p \geq 0$ and $\rho$ is strictly increasing on $\{j+1,j+2,\ldots\}$.  Note that $i>j$ implies $\rho$ is nonempty. Then deleting the terminal $i$ of $w$ yields a member of $\mathcal{U}_{n-1}(\ell,j)$ for some $\ell$ since the leftmost $j$ (and its successor) are unaffected.  Note that $|\rho|>1$ implies $j < \ell \leq i-1$, whereas $|\rho|=1$ implies $1 \leq \ell \leq j$ (in the latter case, $\ell=j$ if and only if $p>0$ or $p=0$ with $t=j$).  Furthermore, $i>j$ implies the terminal $i$ is the only letter of its kind.  Thus considering all possible $\ell$ yields formula \eqref{uneqn2}.

To show \eqref{uneqn3}, let $w \in \mathcal{U}_n(i,i)$, where $1 \leq i \leq n-2$.  First suppose $w=w'ii$, where $w'$ is $(i-1)$-ary.  Then deleting the final $i$ from $w$ results in a member of $\mathcal{U}_{n-1}(i,k)$ for some $k<i$, which accounts for the first sum on the right side.  Note that $i \leq n-2$ implies $w'i \neq 12\cdots(n-1)$ and hence belongs to some $\mathcal{U}_{n-1}(i,k)$.  On the other hand, if the leftmost $i$ of $w$ occurs to the left of the penultimate position, then deleting the final $i$ results in a member of $\mathcal{U}_{n-1}(\ell,i)$ for some $\ell \in [i]$.  Note that $\ell<i$ in this last case if and only if the successor of the leftmost $i$ is less than $i$ and occurs in the penultimate position of $w$. Considering all possible $\ell$ yields the second sum and completes the proof of \eqref{uneqn3}.  If $i=n-1$, then $w \in \mathcal{U}_n(n-1,n-1)$ implies $w=w'(n-1)(n-1)$, where $w' \in \mathcal{U}_{n-2}$ with no restrictions.  Then there are $q^{n-2}+q\sum_{i=1}^{n-2}\sum_{j=1}^{n-3}u_{n-2}(i,j)$ possibilities in this case, with the extra factor of $q$ multiplying the sum accounting for the terminal letters $n-1$, which completes the proof.
\end{proof}

Using Lemma \ref{invseqLem}, one gets the following values of the array $u_n(i,j)$ when $n=3$ and $n=4$:
$$u_3(1,1)=1,\quad u_3(1,2)=u_3(2,1)=u_3(2,2)=u_3(3,1)=q,\quad u_3(3,2)=0$$
and
\begin{align*}
u_4(1,1)=1 && u_4(1,2)=q && u_4(1,3)=q+q^2\\
u_4(2,1)=q && u_4(2,2)=3q && u_4(2,3)=2q^2\\
u_4(3,1)=q+q^2 && u_4(3,2)=2q^2 && u_4(3,3)=q+q^2\\
u_4(4,1)=q+2q^2 && u_4(4,2)=2q^2 &&u_4(4,3)=0,
\end{align*}
which may be verified using the definitions.

Let $U_n(v,w)=U_n(v,w;q)$ be given by
$$U_n(v,w)=\sum_{i=1}^n\sum_{j=1}^{n-1}u_n(i,j)v^iw^j, \qquad n \geq 2,$$
with $U_1(v,w)=0$.  We wish to find $U_n(v,w)+v^{n}q^{n-1}$ for all $n \geq 1$ and, in particular, $U_n(v,1)+v^{n}q^{n-1}$, where the additional term accounts for $12\cdots n$.  Define $$U(x,v,w;q)=U(x,v,w)=\sum_{n\geq2}\sum_{i=1}^n\sum_{j=1}^{n-1}u_n(i,j)v^iw^jx^n,$$
where the $q$ argument is often suppressed.  Then $$U(x,v,1)+\sum_{n\geq 1}v^nq^{n-1}x^n=U(x,v,1)+\frac{vx}{1-vqx}$$
yields the generating function for the joint distribution on $\mathcal{U}_n$ for $n \geq 1$ of the last letter and dist statistics (marked by $v$ and $q$, respectively).

In order to find $U(x,v,w)$, we define the further generating functions $$U^+(x,v,w)=\sum_{n\geq3}\sum_{i=1}^{n-2}\sum_{j=i+1}^{n-1}u_n(i,j)v^iw^jx^n,$$ $$U^-(x,v,w)=\sum_{n\geq2}\sum_{i=2}^n\sum_{j=1}^{i-1}u_n(i,j)v^iw^jx^n$$ and $$U^0(x,v)=\sum_{n\geq2}\sum_{i=1}^{n-1}u_n(i,i)v^ix^n.$$
Clearly,
$$U(x,v,w)=U^+(x,v,w)+U^-(x,v,w)+U^0(x,vw),$$ by the definitions.
Rewriting \eqref{uneqn1}-\eqref{uneqn3} in terms of generating functions yields
\begin{align*}
U^+(x,v,w)&=\frac{vw^2qx^3}{(1-wqx)(1-vwqx)}+xU^-(x,w,v)+\frac{vwqx^2}{1-v}(U^-(x,w,v)-U^-(x,vw,1))\\
&+\frac{vwqx}{1-v}(U^-(x,w,v)-U^-(x,vw,1)-wqxU(wx,1,v)+vwqxU(vwx,1,1)),\\
U^-(x,v,w)&=\frac{vqx}{1-v}(U^-(x,v,w)-vU^-(vx,1,w)+U^0(x,vw)-vU^0(vx,w))\\
&+\frac{vqx}{1-v}(U^+(x,1,vw)-vU^+(vx,1,w)),\\
U^0(x,v)&=\frac{vx^2}{1-vqx}+xU^+(x,1,v)+xU^0(x,v)+xU^-(x,v,1).
\end{align*}

One may verify by programming the following solution of the foregoing system of functional equations, where $t(x)$ is as above.

\begin{theorem}\label{UTheor}
The generating function $U(x,v,w)$ is given by
$$U(x,v,w)=U^+(x,v,w)+U^-(x,v,w)+U^0(x,vw),$$
where
\begin{align*}
&U^0(x,v)=\frac{vx^2(2(v-1)vqx-(v-3)vx-3v+1)+v(v+1)x^2t(vx)}{2(2vqx-vx+v+x-1)(vqx-vx-1)},\\
&U^+(x,v,w)=-\frac{vw^2qx^2(vwqx-v+x)t(vwx)}{2(2vwqx-vwx+vw+x-1)(vw^2q^2x^2-vw^2qx^2-wqx+vwx-wx-v+1)}\\
&-\frac{vw^2qx^2((2vwq^2-vwq+1)vwx^2-(2v^2wq+vwq-v^2w+2vw+2v-1)x+v)}{2(2vwqx-vwx+vw+x-1)(vw^2q^2x^2-vw^2qx^2-wqx+vwx-wx-v+1)},\\
&U^-(x,v,w)=\frac{v^2wqx^2(vwqx-vwx+vw+w-1)t(vwx)}{2(2vwqx-vwx+vw+x-1)(v^2wq^2x^2-v^2wqx^2-vqx+vwx-vx-w+1)}\\
&+\frac{(-2v(q+1)x^2+(vq+2v+2)x-1)v^2wqx^2}{2(v^2wq^2x^2-v^2wqx^2-vqx+vwx-vx-w+1)(vqx-1)(2vwqx-vwx+vw+x-1)}\\
&+\frac{(2v^2q(q-1)x^3-(3vq^2-2(v+1))vx^2-(v^2q-2vq+2(v+1)^2)x+v+1)v^2w^2qx^2}{2(v^2wq^2x^2-v^2wqx^2-vqx+vwx-vx-w+1)(vqx-1)(2vwqx-vwx+vw+x-1)}\\
&+\frac{(2q-1)(vqx-1)(vqx-vx+v+1)v^3w^3qx^3}{2(v^2wq^2x^2-v^2wqx^2-vqx+vwx-vx-w+1)(vqx-1)(2vwqx-vwx+vw+x-1)}.
\end{align*}
\end{theorem}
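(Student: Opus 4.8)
The plan is to treat the displayed system of three functional equations for $U^{+}$, $U^{-}$ and $U^{0}$ as the object to be solved, to recover the stated closed forms by successive elimination together with the kernel method, and then to record that the proposed expressions satisfy the system --- the last being a finite, computer-assisted check. The first move is to eliminate $U^{0}$: the third equation is linear in $U^{0}(x,v)$ and yields
\[
U^{0}(x,v)=\frac{vx^{2}}{(1-x)(1-vqx)}+\frac{x}{1-x}\bigl(U^{+}(x,1,v)+U^{-}(x,v,1)\bigr),
\]
so every occurrence of $U^{0}$ --- including the specializations $U^{0}(x,vw)$, $U^{0}(vx,w)$ and those hidden inside $U(wx,1,v)$ and $U(vwx,1,1)$ --- can be rewritten through the boundary series $U^{+}(\cdot,1,\cdot)$ and $U^{-}(\cdot,\cdot,1)$, after which the first two equations involve only $U^{+}$, $U^{-}$ and such boundary series.

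Next I would apply the kernel method to the $U^{-}$-equation. Collecting the $U^{-}(x,v,w)$ terms puts it in the form $\tfrac{1-v(1+qx)}{1-v}\,U^{-}(x,v,w)=\tfrac{vqx}{1-v}(\cdots)$, whose right-hand side, after the previous elimination, involves only boundary specializations of $U^{+}$ and $U^{-}$ (second or third variable set to $1$) evaluated at first argument $x$ or $vx$. The kernel is the linear factor $1-v(1+qx)$, whose root $v=v_{0}(x)=1/(1+qx)$ is a power series in $x$ with $v_{0}(0)=1$, and $1-v_{0}=qx/(1+qx)\neq 0$, so the substitution $v=v_{0}$ is admissible; it produces a relation among the boundary series alone. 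Feeding this relation, and the analogous consequences of the first equation --- which ties $U^{+}(x,v,w)$ to $U^{-}(x,w,v)$, $U^{-}(x,vw,1)$ and the contracted evaluations $U(wx,1,v)$, $U(vwx,1,1)$ --- into one another, the system collapses to a single functional equation in one catalytic variable, at which stage one is led to a quadratic kernel whose discriminant is $(1-2q)^{2}X^{2}-2(1+2q)X+1$, with $X$ the relevant product of variables; the power-series root of that quadratic is exactly what introduces the algebraic function $t$, and since the product variable occurring throughout is $vwx$ one obtains $t(vwx)$ in $U^{+}$ and $U^{-}$ and $t(vx)$ in $U^{0}$.

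From there I would solve for the boundary series and unwind the substitutions in order --- $U^{-}(x,v,w)$ from the kernel equation, then $U^{+}(x,v,w)$ from the first equation, then $U^{0}(x,v)$ from the displayed formula --- to arrive at the stated expressions; the tables of $u_{3}(i,j)$ and $u_{4}(i,j)$ computed above fix any integration constant and serve as a sanity check. For the write-up it is not necessary to reproduce this derivation: it is enough to substitute the stated $U^{+}$, $U^{-}$, $U^{0}$ back into the three functional equations. After substitution, every term occurring in the first two equations is a rational function of $q,x,v,w$ times a power of $t(vwx)$, and $t(vwx)^{2}=(1-2q)^{2}v^{2}w^{2}x^{2}-2(1+2q)vwx+1$, so each of those equations is equivalent to two rational-function identities (the coefficients of $t(vwx)^{0}$ and $t(vwx)^{1}$); the third equation reduces similarly to two identities in $t(vx)$. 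These identities are routine to confirm with a computer algebra system, which is what ``one may verify by programming'' above refers to.

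The main obstacle is the middle step: managing the three-way coupling, correctly identifying the single governing equation and its quadratic kernel, and checking throughout that the kernel roots used are the admissible power-series ones. The remaining back-substitution and simplification, while voluminous, is mechanical.
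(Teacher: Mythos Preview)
Your proposal is correct and takes essentially the same approach as the paper: the paper's proof consists entirely of the sentence ``One may verify by programming the following solution of the foregoing system of functional equations,'' i.e., direct substitution of the stated formulas into the three functional equations for $U^{+}$, $U^{-}$, $U^{0}$ and a computer check of the resulting rational identities. Your additional heuristic derivation via elimination and the kernel method is more than the paper provides, but your bottom line --- that the write-up need only perform the back-substitution, splitting each equation into the $t^{0}$ and $t^{1}$ components --- matches the paper exactly.
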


As a corollary, we have
\begin{align*}
U(x,v,1)&=\frac{xv(vqx-v-x)t(xv)}{2(vq^2x^2-vqx^2-qx+vx-v-x+1)(vqx-vx-1)}\\
&+\frac{(x+(qx^2+qx+3x^2-2x-1)v)vx}{2(vqx-vx-1)(vqx-1)(vq^2x^2-vqx^2-qx+vx-v-x+1)}\\
&-\frac{(2q^2x^2+3q^2x-qx^2-qx-3q+2x-1+(1-2q)(qx-1)vqx)v^3x^2}{2(vqx-vx-1)(vqx-1)(vq^2x^2-vqx^2-qx+vx-v-x+1)},
\end{align*}
and, in particular,
$$U(x,1,1)=\frac{1-(1+3q)x+q(2q-1)x^2-(1-qx)\sqrt{(1-2q)^2x^2-2x(1+2q)+1}
}{2q(1+x-qx)(1-qx)}.$$

\section{The case $(1324,1423)$.}

To find a recursive structure leading to recurrences in this case, we make use of a generating tree approach (see, e.g., \cite{West,West2}).  Consider forming members of $\mathcal{R}_n=\mathcal{S}_n(1324,1423)$ from members of $\mathcal{R}_{n-1}$, expressed using the elements of $[2,n]$, by inserting $1$ appropriately.  By an \emph{active site} of $\pi \in \mathcal{R}_n$, written using letters in $[2,n+1]$, we mean a position in which one may insert $1$ without introducing $1324$ or $1423$.  One often refers to $\rho \in \mathcal{R}_{n-1}$ in which the element $1$ is inserted in forming $\pi \in \mathcal{R}_n$ as the \emph{precursor} of $\pi$.

Let $\text{act}(\pi)$ denote the number of (active) sites of $\pi$.  Then it is seen that $\text{act}(\pi)=n+1$ if and only if $\pi$ avoids both $213$ and $312$.  On the other hand, suppose that the $i$-th letter from the right within $\pi$, say $x$, starts a $213$ or $312$ and is the rightmost such letter of $\pi$ to do so (i.e., $i$ is minimal).  Then $\text{act}(\pi)=i$, where $3 \leq i \leq n$, with the sites of $\pi$ corresponding to the $i$ possible positions of $\pi$ to the right of $x$ in which to insert $1$.
Note that $\text{act}(\pi)=i$ where $3 \leq i \leq n$ implies $\pi$ may be decomposed as
$$\pi=\alpha x\beta\gamma,$$
where $\alpha$ is of length $n-i$, $\beta$ is increasing with $|\beta|\geq 2$ and contains $\max(\beta\cup \gamma)$, $x>\min(\beta)$ and $\gamma$ is decreasing and possibly empty.

By an \emph{active descent}, we will mean an active site that is itself a descent.  Note that in the decomposition of $\pi$ above, the active descents of $\pi$ would correspond to the positions directly preceding the letters of $\gamma$.  Let $\text{dact}(\pi)$ denote the number of active descents of $\pi$.  Define $\mathcal{R}_n(i,j)$ as the subset of $\mathcal{R}_n$ consisting of those $\pi$ for which $\text{act}(\pi)=i$ and $\text{dact}(\pi)=j$.  Note that $0 \leq j \leq i-2$, with $j=i-2$ occurring only when $\pi$ has no ascents.
Let $r_n(i,j)=r_n(i,j;p,q)$ be defined by
$$r_n(i,j)\sum_{\pi \in \mathcal{R}_n(i,j)}p^{\text{first}(\pi)}q^{\text{desc}(\pi)}.$$
For example,  we have
$$\mathcal{R}_5(4,0)=\{23145,32145,34125,35124,42135,43125,45123,52134,53124,54123\},$$
which implies $r_5(4,0)=p^2q(1+p)^2+p^3q^2(1+2p+3p^2)$.

It is seen that the $r_n(i,j)$ can assume nonzero values only for $3 \leq i \leq n+1$ and $0 \leq j \leq i-2$ if $n \geq 2$, with $r_1(2,0)=p$.  When $n=2$, we have $r_2(3,1)=p^2q$ and $r_2(3,0)=p$.  If $n=3$, then $r_3(4,0)=p$, $r_3(4,1)=pq(1+p)$, $r_3(4,2)=p^3q^2$ and $r_3(3,0)=p^2q(1+p)$.  Note that one may regard $p$ and $q$ as indeterminates or, alternatively, as fixed positive integers.  In the former case, one would regard members of the set $\mathcal{R}_n(i,j)$ as being weighted according to certain parameter values, whereas in the latter, one can enumerate ``colored'' members of $\mathcal{R}_n(i,j)$ wherein a first letter of size $\ell$ is assigned one of $p^\ell$ colors and each descent is assigned one of $q$ colors, independently of the others.  These interpretations of the polynomial $r_n(i,j)$ may be used interchangeably when the context is clear and analogous interpretations apply to other distribution polynomials encountered.

The $r_n(i,j)$ are given recursively as follows.

\begin{lemma}\label{1324x1423L1}
We have
\begin{equation}\label{1324x1423e1}
r_n(j+3,j)=pqr_{n-1}(j+2,j-1)+pq\sum_{i=j+3}^nr_{n-1}(i,j)+p\sum_{k=j+1}^{n-2}\sum_{i=k+2}^n r_{n-1}(i,k), \quad 0 \leq j \leq n-3,
\end{equation}
\begin{equation}\label{1324x1423e2}
r_{n}(i,j)=pqr_{n-1}(i-1,j-1)+pr_{n-1}(i-1,j)+pq\sum_{\ell=i}^nr_{n-1}(\ell,j), \quad 4 \leq j+4 \leq i \leq n,
\end{equation}
\begin{equation}\label{1324x1423e3}
r_n(j+2,j)=\delta_{j,n-1}\cdot p^nq^{n-1}, \quad 0 \leq j \leq n-1,
\end{equation}
and
\begin{equation}\label{1324x1423e4}
r_n(n+1,j)=\sum_{\ell=1}^{n-1}p^\ell q^j\binom{n-\ell-1}{n-j-2}, \quad 0 \leq j \leq n-2.
\end{equation}
\end{lemma}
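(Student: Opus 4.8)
\emph{Approach.} The plan is to exploit the generating-tree structure already introduced: every $\pi\in\mathcal{R}_n$ arises uniquely from its precursor $\rho\in\mathcal{R}_{n-1}$ (rewritten on $[2,n]$) by inserting $1$ at one of its $\text{act}(\rho)$ active sites. Since inserting a new minimal letter cannot destroy an occurrence of $213$ or $312$, every active site of $\rho$ lies weakly to the right of the distinguished letter $x'$ in the decomposition $\rho=\alpha'x'\beta'\gamma'$, so the only way to insert $1$ at the front — the only move making $\text{first}(\pi)=1$ — is when $\rho\in\mathcal{S}_{n-1}(213,312)$. Hence for the precursors relevant to \eqref{1324x1423e1} and \eqref{1324x1423e2} one always has $\text{first}(\pi)=\text{first}(\rho)+1$, contributing a factor $p$, while inserting at an active descent fixes $\text{desc}$ and every other non-front insertion raises it by one, contributing a factor $q$. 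The first task is to record how the label $(\text{act},\text{dact})$ transforms under each insertion.

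\emph{The succession rule.} For a precursor $\rho=\alpha'x'\beta'\gamma'$ with $\text{act}(\rho)=i'$ and $\text{dact}(\rho)=g'$ (so $|\beta'|=i'-g'-1\ge2$ and $|\gamma'|=g'$), I would verify — by writing out $\pi$ and re-reading its decomposition — that the $i'$ insertions fall into four families: (i) immediately after $x'$: weight $\times p$, $(i',g')\mapsto(i'+1,g')$; (ii) in the $k$-th interior gap of $\beta'$ for $1\le k\le|\beta'|-1$: weight $\times pq$, $(i',g')\mapsto(i'-k+1,g')$, so these hit exactly the labels $(i,g')$ with $g'+3\le i\le i'$; (iii) just before the $(m+1)$-st letter of $\gamma'$ for $0\le m\le g'-1$: weight $\times p$, $(i',g')\mapsto(g'-m+2,\,g'-m-1)$, so these always land on the diagonal $\text{act}=\text{dact}+3$, hitting $(g'+2,g'-1),(g'+1,g'-2),\dots,(3,0)$; (iv) at the very end: weight $\times pq$, $(i',g')\mapsto(i'+1,g'+1)$. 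For a precursor $\rho\in\mathcal{S}_{n-1}(213,312)$ — unimodal with peak $n$, increasing run of length $t$, decreasing run of length $m=\text{dact}(\rho)$, with $t+m=n-1$ and $\text{act}(\rho)=n$ — the same bookkeeping shows that insertions into the interior of the increasing run behave like (ii) with $i'=n$, insertions just before or within the decreasing run behave exactly like (iii), and insertions at the front or the end keep $\pi\in\mathcal{S}_n(213,312)$, i.e.\ give $\text{act}(\pi)=n+1$.

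\emph{Assembling the recurrences.} Reading the rule backwards at a target label $(i,j)$ and collecting contributions: move (i) from a precursor $(i-1,j)$ gives $p\,r_{n-1}(i-1,j)$; move (iv) from a precursor $(i-1,j-1)$ gives $pq\,r_{n-1}(i-1,j-1)$; moves (ii), together with the increasing-run moves of the $213,312$-avoiding precursors, from the precursors $(\ell,j)$ with $\ell\ge i$ give $pq\sum_{\ell=i}^{n}r_{n-1}(\ell,j)$; and moves (iii) enter only when $i=j+3$. This yields \eqref{1324x1423e2} for $j+4\le i\le n$ directly. For $i=j+3$ the (i)-term $p\,r_{n-1}(j+2,j)$ vanishes, while the (iii)-moves contribute $p\sum_{k\ge j+1}\sum_{i'\ge k+2}r_{n-1}(i',k)$: any precursor $\rho$ with $\text{dact}(\rho)=k$ satisfies $\text{act}(\rho)\ge k+3$ unless it is the unique ascent-free permutation of $\mathcal{R}_{n-1}$ (label $(n,n-2)$), so $r_{n-1}(i',k)=0$ for $i'<k+3$ except at $i'=k+2=n$, letting the inner sum start at $i'=k+2$ and $k$ run up to $n-2$; combining gives \eqref{1324x1423e1}. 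Finally, \eqref{1324x1423e3} and \eqref{1324x1423e4} are read off directly: $\text{dact}(\pi)=\text{act}(\pi)-2$ forces $\pi$ to have no ascent, so $\pi=n(n-1)\cdots1$ (with $\text{act}=n+1$, $\text{first}=n$, $\text{desc}=n-1$) and $r_n(j+2,j)=\delta_{j,n-1}p^nq^{n-1}$; and $\text{act}(\pi)=n+1$ is equivalent to $\pi\in\mathcal{S}_n(213,312)\subseteq\mathcal{R}_n$ (as $1324$ contains $213$ and $1423$ contains $312$), these being the permutations $a_1<\cdots<a_k=n>b_1>\cdots>b_m$ with $\text{dact}(\pi)=m=\text{desc}(\pi)$; prescribing $\text{desc}(\pi)=j$ and $\text{first}(\pi)=\ell$ fixes $k=n-j$ and leaves $\binom{n-\ell-1}{n-j-2}$ choices for the rest of the increasing run, giving \eqref{1324x1423e4}.

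\emph{Main obstacle.} The heart of the argument is the succession rule of the second paragraph: for each insertion family, and its analogue for the $213,312$-avoiding precursors, one must correctly re-identify the new rightmost $213/312$-starting letter $x$ of $\pi$ and re-read $\pi=\alpha x\beta\gamma$ so as to pin down $(\text{act}(\pi),\text{dact}(\pi))$ and the change in $\text{desc}$. The delicate points are that the $\gamma'$-insertions (iii) always collapse onto the diagonal $\text{act}=\text{dact}+3$ — which is precisely why active descents feed only \eqref{1324x1423e1} and not \eqref{1324x1423e2} — and that the $213,312$-avoiding precursors are the sole source of the labels $(n+1,\cdot)$ and the only place where $\text{first}$ is reset to $1$, so these precursors must be handled separately, their contribution being captured by the direct count behind \eqref{1324x1423e4}.
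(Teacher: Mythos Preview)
Your proposal is correct and follows essentially the same generating-tree approach as the paper's proof: both arguments track how inserting $1$ into a precursor $\rho$ changes the label $(\text{act},\text{dact})$ and the weights $p,q$, splitting into the same cases (insertion at the leftmost site, in an interior ascent of $\beta'$, at an active descent in $\gamma'$, or at the end, with the unimodal precursors handled separately). The only difference is organizational --- you first write down the full succession rule and then read it backwards, whereas the paper argues each equation directly by locating $1$ inside $\pi$ and deleting it --- but the underlying case analysis is identical.
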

\begin{proof}
Formula \eqref{1324x1423e3} follows from observing that $\mathcal{R}_n(j+2,j)$ is nonempty if and only if $j=n-1$, in which case it consists of the single element $n(n-1)\cdots 1$ of weight $p^nq^{n-1}$.  For \eqref{1324x1423e4},
observe first that $\mathcal{R}_{n}(n+1,j)$ consists of $\pi$ of the form $\pi=\alpha n \beta$, where $\alpha$ is increasing and $\beta$ is decreasing with $|\beta|=j$.  Then $0 \leq j \leq n-2$ implies $\alpha$ is nonempty and we enumerate members of $\mathcal{R}_n(n+1,j)$ in this case according to the first letter $\ell$ where $1 \leq \ell \leq n-1$.  Then $\alpha$ must contain $n-j-2$ members of $[\ell+1,n-1]$ in increasing order, with the remaining elements of $[n+1]-\{\ell\}$ comprising $\beta$.  Thus, there are $\binom{n-\ell-1}{n-j-2}$ members of $\mathcal{R}_n(n+1,j)$ with first letter $\ell$, each having weight $p^\ell q^j$.  Considering all possible $\ell$ then gives \eqref{1324x1423e4}.

To show \eqref{1324x1423e1}, suppose $\pi \in \mathcal{R}_n(j+3,j)$, where $0 \leq j \leq n-3$.  Then $\pi$ may be expressed as $\pi=\alpha xyz\beta$, where $\alpha$ or $\beta$ is possibly empty, $y<x,z$, and $\beta$ is decreasing of length $j$ with $\max(\beta)<z$ if $j>0$.  Note that then the $j+3$ sites of $\pi$ correspond to the positions to the right of the letter $x$ and that either $y=1$ or $1$ occurs at the end of $\pi$.  For if not, then $y<x,z$ and $\beta$ decreasing would imply $1$ lies in $\alpha$ ensuring an occurrence of $1324$ or $1423$, as witnessed by $1xyz$.  If $1$ occurs at the end of $\pi$, then there are $pqr_{n-1}(j+2,j-1)$ possibilities since the $1$ would increase all parameter values (including act and dact) by one.

So assume $1$ does nor occur at the end of $\pi$ and we consider cases based on the relative sizes of $1$'s neighbors as follows. Let us say that a pattern $\tau$ occurs as a \emph{subword} in $\pi$ if the letters corresponding to an occurrence of $\tau$ are consecutive entries of $\pi$.  We consider cases based on whether the element $1$ within $\pi \in \mathcal{R}_n(j+3,j)$ is the middle letter in an occurrence of a $213$ or $312$ subword.
In order for $1$ to be involved in a $213$, then $1$ must be inserted directly preceding the $(j+1)$-st letter from the right within a member of $\mathcal{R}_{n-1}(i,j)$ for some $j+3 \leq i \leq n$.  Note that this increases the first letter by one as well as the number of descents (as $1$ is placed within an ascent in this case).  Considering all possible $i$ then gives the first sum on the right side of \eqref{1324x1423e1}.  For $1$ to be involved in a $312$ instead, the precursor $\rho$ of $\pi$ must satisfy $\text{dact}(\rho)=k$ where $j+1 \leq k \leq n-2$, with $1$ being inserted into the $(j+1)$-st active descent of $\rho$ from the right.  Note that $\text{act}(\rho)=i$ for any $i \in [k+2,n]$ since there is no restriction as to the number of additional sites of $\rho$.  Then $\rho \in \mathcal{R}_{n-1}(i,k)$ where $i$ and $k$ are as specified, with the number of descents unchanged by the insertion of $1$. Considering all possible $i$ and $k$ then gives the second summation formula in and finishes the proof of \eqref{1324x1423e1}.

A similar argument may be given for \eqref{1324x1423e2}.  First note that there are $pqr_{n-1}(i-1,j-1)$ possible $\pi$ obtained by inserting $1$ at the end of the precursor $\rho$ and $pr_{n-1}(i-1,j)$ possibilities if $1$ is inserted in the leftmost site of $\rho$ (in which case $1$ would be part of a $312$ subword).  On the other hand, if $1$ is to be part of a $213$, then it must be inserted into $\rho \in \mathcal{R}_{n-1}(\ell,j)$ for some $i \leq \ell \leq n$ directly following the $(i-1)$-st letter from the right.  Note that $j+3<i \leq \ell$ implies that there is an ascent between the $(i-1)$-st and $(i-2)$-nd rightmost letters of $\rho$.  Hence, insertion of $1$ into this site introduces an additional descent, implying that there are $pq\sum_{\ell=i}^nr_{n-1}(\ell,j)$ possible $\pi$ in which $1$ forms a $312$.  Combining this case with the previous implies \eqref{1324x1423e2} and completes the proof.
\end{proof}

By Lemma \ref{1324x1423L1}, one obtains the following nonzero values for $r_n(i,j)$ when $n=4$:
\begin{align*}
r_4(3,0)=(2p^2+p^3)q+(p^3+2p^4)q^2,&& r_4(4,0)=(p^2+p^3+p^4)q, && r_4(5,0)=p,\\
r_4(4,1)=(p^2+2p^3+2p^4)q^2, && r_4(5,1)=(2p+p^2)q, && r_4(5,2)=(p+p^2+p^3)q^2,\\
r_4(5,3)=p^4q^3, &&~ && ~
\end{align*}
which may be verified directly using the definitions.

Define
$$R(x,v,w;p,q)=R(x,v,w)=\sum_{n\geq2}\sum_{i=3}^{n+1}\sum_{j=0}^{i-2}r_n(i,j)v^iw^jx^n.$$
We seek a formula for $R(x,1,1;p,q)$ and wish to show $$px+R(x,1,1;p,q)=\frac{px}{1-pqx}+U(x;p,1;q).$$

To aid in finding $R(x,v,w)$, define the auxiliary generating function
$$R^+(x,v,w)=\sum_{n\geq4}\sum_{i=4}^n\sum_{j=0}^{i-4}r_n(i,j)v^iw^jx^n,$$
along with
$C(x,v)=\sum_{n\geq2}\sum_{i=0}^{n-1}r_n(i+2,i)v^ix^n$,
$D(x,v)=\sum_{n\geq3}\sum_{i=0}^{n-3}r_n(i+3,i)v^ix^n$, and
$E(x,v)=\sum_{n\geq2}\sum_{j=0}^{n-2}r_n(n+1,j)v^jx^n$. Clearly, $$R(x,v,w)=R^+(x,v,w)+v^2C(x,vw)+v^3D(x,vw)+vE(vx,w).$$

We now rewrite the recurrence relations from Lemma \ref{1324x1423L1} in terms of generating functions.
By \eqref{1324x1423e4}, we have
\begin{align*}
E(x,v)&=\sum_{n\geq2}\sum_{j=0}^{n-2}\sum_{i=1}^{n-1}p^iq^j\binom{n-1-i}{n-j-2}v^jx^n =\sum_{i\geq1}\sum_{n\geq0}\sum_{j=0}^{n-1+i}p^iq^j\binom{n}{j+1-i}v^jx^{n+1+i}\\ &=\sum_{i\geq1}\sum_{j\geq0}\sum_{n\geq j+1}p^iq^{j+i}\binom{n}{j+1}v^{j+i}x^{n+1+i}+\sum_{i\geq1}\sum_{j=0}^{i-1}\sum_{n\geq0}
p^iq^j\binom{n}{j+1-i}v^jx^{n+1+i}\\
&=\sum_{i\geq1}\sum_{j\geq0}\frac{p^iq^{j+i}v^{j+i}x^{j+2+i}}{(1-x)^{j+2}}
+\sum_{i\geq1}\sum_{j=0}^{i-1}\sum_{n\geq0}p^iq^j\binom{n}{j+1-i}v^jx^{n+1+i}\\ &=\sum_{i\geq1}\frac{p^iq^iv^ix^{i+2}}{(1-x)(1-x-qvx)}
+\sum_{i\geq0}\sum_{n\geq0}p^{i+1}q^iv^ix^{n+2+i}\\
&=\frac{pqvx^3}{(1-x)(1-x-qvx)(1-pqvx)}
+\sum_{i\geq0}\frac{p^{i+1}q^iv^ix^{i+2}}{1-x},
\end{align*}
which implies
\begin{align}\label{eq13241423AA1}
E(x,v)&=\frac{px^2}{(1-x-qvx)(1-pqvx)}.
\end{align}
By \eqref{1324x1423e3}, we have
\begin{align}\label{eq13241423AA2}
C(x,v)=\frac{p^2qvx^2}{1-pqvx}.
\end{align}
By \eqref{1324x1423e1}, we have
\begin{align*}
&D(x,v)\\
&=\sum_{n\geq3}\sum_{i=0}^{n-3}pqr_{n-1}(i+2,i-1)v^ix^n
+\sum_{n\geq3}\sum_{i=0}^{n-3}\sum_{j=i+3}^npqr_{n-1}(j,i)v^ix^n\\
&+p\sum_{n\geq3}\sum_{i=0}^{n-3}\sum_{k=i+1}^{n-2}\sum_{a=k+2}^nr_{n-1}(a,k)v^ix^n\\
&=pqvx\sum_{n\geq3}\sum_{i=0}^{n-3}r_n(i+3,i)v^ix^n
+pqx\sum_{n\geq2}\sum_{i=0}^{n-2}\sum_{j=i+3}^{n+1}r_n(j,i)v^ix^n\\
&+px\sum_{n\geq2}\sum_{i=0}^{n-2}\sum_{k=i+1}^{n-1}\sum_{a=k+2}^{n+1}r_n(a,k)v^ix^n\\
&=pqvxD(x,v)+pqxR^+(x,1,v)+pqxE(x,v)+pqxD(x,v) +px\sum_{n\geq2}\sum_{i=0}^{n-2}\sum_{k=i+1}^{n-1}\sum_{a=k+2}^{n+1}r_n(a,k)v^ix^n\\
&=pqvxD(x,v)+pqxR^+(x,1,v)+pqxE(x,v)+pqxD(x,v) +px\sum_{n\geq2}\sum_{k=1}^{n-1}\sum_{a=k+2}^{n+1}\sum_{i=0}^{k-1}r_n(a,k)v^ix^n\\
&=pqvxD(x,v)+pqxR^+(x,1,v)+pqxE(x,v)+pqxD(x,v)
+\frac{px}{1-v}\sum_{n\geq2}\sum_{a=3}^{n+1}\sum_{k=1}^{a-2}r_n(a,k)(1-v^k)x^n,
\end{align*}
which implies
\begin{align}
D(x,v)&=pqvxD(x,v)+pqxR^+(x,1,v)+pqxE(x,v)+pqxD(x,v)\nonumber\\
&+\frac{px}{1-v}(R^+(x,1,1)-R^+(x,1,v))\nonumber\\
&+\frac{px}{1-v}(E(x,1)-E(x,v)+D(x,1)-D(x,v)+C(x,1)-C(x,v)).\label{eq13241423AA3}
\end{align}
By \eqref{1324x1423e2}, we have
\begin{align*}
R^+(x,v,w)&=pqvwx\sum_{n\geq4}\sum_{i=4}^n\sum_{j=0}^{i-4}r_n(i,j)v^iw^jx^n
+pvx\sum_{n\geq3}\sum_{i=3}^n\sum_{j=0}^{i-3}r_n(i,j)v^iw^jx^n\\
&+pqx\sum_{n\geq3}\sum_{i=4}^{n+1}\sum_{j=0}^{i-4}\sum_{a=i}^{n+1}r_n(a,j)v^iw^jx^n\\
&=pqvwxR^+(x,v,w)+pvxR^+(x,v,w)+pv^4xD(x,vw)\\
&+pqx\sum_{n\geq3}\sum_{j=0}^{n-3}\sum_{a=j+4}^{n+1}\sum_{i=j+4}^ar_n(a,j)v^iw^jx^n\\
&=pqvwxR^+(x,v,w)+pvxR^+(x,v,w)+pv^4xD(x,vw)\\
&+\frac{pqvx}{1-v}\sum_{n\geq4}\sum_{a=4}^n\sum_{j=0}^{a-4}r_n(a,j)(v^{j+3}-v^a)w^jx^n\\
&+\frac{pqvx}{1-v}\sum_{n\geq3}\sum_{j=0}^{n-3}r_n(n+1,j)(v^{j+3}-v^{n+1})w^jx^n,
\end{align*}
which implies
\begin{align}\label{eq13241423AA4}
R^+(x,v,w)&=pqvwxR^+(x,v,w)+pvxR^+(x,v,w)+pv^4xD(x,vw)\nonumber\\
&+\frac{pqvx}{1-v}(v^3R^+(x,1,vw)-R^+(x,v,w))+\frac{pqv^2x}{1-v}(v^2E(x,vw)-E(vx,w)).
\end{align}

Using \eqref{eq13241423AA1} and \eqref{eq13241423AA2}, we can write \eqref{eq13241423AA3} and \eqref{eq13241423AA4} as
\begin{align}
&\left(1-pqvx-pqx+\frac{px}{1-v}\right)D(x,v)=(pqx-\frac{px}{1-v})R^+(x,1,v)+\frac{px}{1-v}R^+(x,1,1)\nonumber\\
&\qquad+\frac{px}{1-v}D(x,1)+\frac{(qx-1)(pqvx+pqx+px-p-1)p^2qx^3}{(pqx-1)(qx+x-1)(pqvx-1)(qvx+x-1)}.\label{eq13241423AM2}\\
&\left(1-pqvwx-pvx+\frac{pqvx}{1-v}\right)R^+(x,v,w)\nonumber=pv^4xD(x,vw)+\frac{pqv^4x}{1-v}R^+(x,1,vw)\nonumber\\
&\qquad-\frac{p^2qv^4x^4}{(pqvwx-1)(qvwx+x-1)(qvwx+vx-1)},\label{eq13241423AM1}
\end{align}
Replacing $w$ with $w/v$ in \eqref{eq13241423AM1} then gives
\begin{align*}
&\left(1-pqwx-pvx+\frac{pqvx}{1-v}\right)R^+(x,v,w/v)\\
&\qquad=pv^4xD(x,w)+\frac{pqv^4x}{1-v}R^+(x,1,w)-\frac{p^2qv^4x^4}{(pqwx-1)(qwx+x-1)(qwx+vx-1)}.
\end{align*}
Upon applying the kernel method and taking
$$v=v_0(x,w)=\frac{1-pqwx-pqx+px-\sqrt{(1-pqwx-pqx+px)^2-4px(1-pqwx)}}{2px},$$
one gets
\begin{align}\label{eq13241423AN1}
R^+(x,1,w)&=\frac{px^3(v_0(x,w)-1)}{(1-pqwx)(1-x-qwx)(1-qwx-xv_0(x,w))}+\frac{v_0(x,w)-1}{q}D(x,w).
\end{align}
Substituting \eqref{eq13241423AN1} into \eqref{eq13241423AM2} yields
\begin{align}
&\left(q(1-pqvx-pqx)(1-v)+pqx-px(q(1-v)-1)(v_0(x,v)-1)\right)D(x,v)\nonumber\\
&=px(v_0(x,1)+q-1)D(x,1)+\frac{(q(1-v)-1)(v_0(x,v)-1)p^2qx^4}{(1-pqvx)(1-x-qvx)(1-qvx-xv_0(x,v))}\label{eq13241423AN2}\\
&+\frac{(v_0(x,1)-1)p^2qx^4}{(1-pqx)(1-x-qx)(1-qx-xv_0(x,1))}\nonumber\\
&-\frac{(1-qx)(1-v)(pqvx+pqx+px-p-1)p^2q^2x^3}{(1-pqx)(1-x-qx)(1-pqvx)(1-x-qvx)}.\nonumber
\end{align}
Note that the solution of the equation
$$q(1-pqx-pqvx)(1-v)+pqx-px(q(1-v)-1)(v_0(x,v)-1)=0,$$
is given by
$$v=v_1(x)=\frac{1+2pqx-px-\sqrt{(2q-1)^2p^2x^2-2(2q+1)px+1}}{4pqx}.$$
Thus taking $v=v_1(x)$ in \eqref{eq13241423AN2} yields
\begin{align}
D(x,1)&=\frac{(q(v_1(x)-1)+1)(v_0(x,v_1(x))-1)pqx^3}{(1-pqxv_1(x))(1-x-qxv_1(x))(1-qxv_1(x)-xv_0(x,v_1(x)))(v_0(x,1)-1+q)}\nonumber\\
&-\frac{(v_0(x,1)-1)pqx^3}{(1-pqx)(1-x-qx)(1-qx-xv_0(x,1))(v_0(x,1)-1+q)}\nonumber\\
&-\frac{(1-qx)(v_1(x)-1)(pqxv_1(x)+pqx+px-p-1)pq^2x^2}{(1-pqx)(1-x-qx)(1-pqxv_1(x))(1-x-qxv_1(x))(v_0(x,1)-1+q)}.\label{eq13241423DD1}
\end{align}

Using  \eqref{eq13241423AN2} again, together with the expression for $D(x,1)$, we obtain explicitly the generating function $D(x,v)$:
\begin{align}
K\cdot D(x,v)&=p^3q^2x^5v_0(x,v)v_0(x,v_1(x))(v_1(x)-v)
-(p+1)p^2q^2x^4v_0(x,v)v_1(x)\nonumber\\
&-(pq^2vx-pqvx-pqv-q+1)p^2qx^4v_0(x,v)
+p^3q^2(qx-x-1)x^4v_0(x,v_1(x))v_1(x)\nonumber\\
&-(pq^2v^2x-pqv-qv+q-1)p^2qx^4v_0(x,v_1(x))
-(p+1)p^2q^2(qx-x-1)x^3v_1(x)\nonumber\\
&-p^2q^2v(qx-x-1)(pqvx-p-1)x^3,\label{eq13241423ANM2}
\end{align}
where $K=(pqvx-1)(qvx+xv_0(x,v)-1)(pq^2(v^2-1)x+px(qv-q+1)v_0(x,v)-pqvx+2pqx-px-qv+q)
(pqxv_1(x)-1)(qxv_1(x)+xv_0(x,v_1(x))-1)$. Hence, by \eqref{eq13241423AN1}, we obtain a formula for $R^+(x,1,v)$ and then by \eqref{eq13241423AM1}, a formula for $R^+(x,v,w)$:

\noindent $L\cdot R^+(x,v,w)=-p^3q^2v^4x^5(pq^2(v^2w^2-vw-1)x^2+(pv^2+2pq-p)x^2+(p-1)qvwx+(pq-p+q)x-p)v_0(x,vw)v_0(x,v_1(x))v_1(x)
-v^4qp^2x^5(p^2q^3v^3w^3x^2-2p^2q^2v^2w^2x-p^2qv^3wx^2-2pq^2v^2w^2x+pq^2vwx+p^2qvw+pq^2x+pqvw-2pqx+qvw-pq+px+p-q)v_0(x,vw)v_0(x,v_1(x))
+p^2q^2v^4x^4(p+1)(pq^2v^2w^2x^2-pq^2vwx^2-pq^2x^2+pqvwx+pv^2x^2+2pqx^2-qvwx+pqx-px^2-px+qx-p)v_0(x,vw)v_1(x)
-v^4qp^2x^4(p^2q^4v^3w^3x^3-p^2q^3v^3w^3x^3-p^2q^3v^3w^3x^2-2p^2q^3v^2w^2x^2-p^2q^2v^3wx^3+2p^2q^2v^2w^2x^2+p^2qv^3wx^3-pq^3v^2w^2x^2+2p^2q^2v^2w^2x+p^2qv^3wx^2+pq^2v^2w^2x^2+2pq^2v^2w^2x+p^2q^2vwx-p^2qvwx+pq^2vwx+pqv^2x^2-p^2qvw-2pqvwx-pv^2x^2-pq^2x-pqvw+2pqx-qvw-px+q)v_0(x,vw)
+v^5q^2p^3x^5(pq^2v^2w^2x^2-pq^2vwx^2-pq^2x^2+pqvwx-pqvx^2+2pqx^2+pvx^2-qvwx+pqx+pvx-px^2-px+qx-p)v_0(x,v_1(x))v_1(x)
+v^5qp^2x^5(p^2q^3v^3w^3x^2+p^2q^2v^3w^2x^2-2p^2q^2v^2w^2x-2pq^2v^2w^2x-p^2qv^2wx+pq^2vwx-pqv^2wx+p^2qvw+pq^2x+pqvw+pqvx-2pqx-pvx+qvw-pq+px+p-q)v_0(x,v_1(x))
-v^5q^2p^2x^4(p+1)(pq^2v^2w^2x^2-pq^2vwx^2-pq^2x^2+pqvwx-pqvx^2+2pqx^2+pvx^2-qvwx+pqx+pvx-px^2-px+qx-p)v_1(x)
+v^5qp^2x^4(p^2q^4v^3w^3x^3-p^2q^3v^3w^3x^3-p^2q^3v^3w^3x^2+p^2q^3v^3w^2x^3-p^2q^2v^3w^2x^3-2p^2q^3v^2w^2x^2-p^2q^2v^3w^2x^2+2p^2q^2v^2w^2x^2-pq^3v^2w^2x^2+2p^2q^2v^2w^2x-p^2q^2v^2wx^2+pq^2v^2w^2x^2+p^2qv^2wx^2+2pq^2v^2w^2x-pq^2v^2wx^2+p^2q^2vwx+p^2qv^2wx+pqv^2wx^2-p^2qvwx+pq^2vwx+pqv^2wx-p^2qvw-2pqvwx-pq^2x-pqvw+2pqx-qvw-px+q)$,

\noindent where $L=(pqv^2wx-pqvwx+pqvx+pv^2x-pvx-v+1)(qvwx+vx-1)(pqvwx-1)
(qvwx+xv_0(x,vw)-1)(pq^2v^2w^2x+pqvwxv_0(x,vw)-pq^2x-pqvwx-pqxv_0(x,vw)+2pqx+pxv_0(x,vw)-px-qvw+q)(pqxv_1(x)-1)(qxv_1(x)+xv_0(x,v_1(x))-1)$.

Therefore, we can state the following result.

\begin{theorem}\label{Rgenfun}
We have
\begin{align*}
R(x,v,w)=R^+(x,v,w)+v^2C(x,vw)+v^3D(x,vw)+vE(vx,w),
\end{align*}
where $R^+(x,v,w)$ is as above and $C(x,v)$, $D(x,v)$ and $E(x,v)$ are given in \eqref{eq13241423AA2}, \eqref{eq13241423ANM2} and \eqref{eq13241423AA1}, respectively.
\end{theorem}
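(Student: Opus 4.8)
The statement collects into one formula the generating-function computation carried out just above it, so the plan is simply to organize that computation into a proof. The backbone is the decomposition
$$R(x,v,w)=R^+(x,v,w)+v^2C(x,vw)+v^3D(x,vw)+vE(vx,w),$$
which I would justify by partitioning the index set $\{(i,j):3\le i\le n+1,\ 0\le j\le i-2\}$ over which $r_n(i,j)$ is summed into the four mutually exclusive and exhaustive ranges $i=n+1$ with $j\le n-2$ (contributing $E$, after the substitution $x\mapsto vx$ and the extra factor $v$ that together produce $v^{n+1}$), $i=j+2$ (contributing $v^2C(x,vw)$, and in particular absorbing the corner case $(i,j)=(n+1,n-1)$), $i=j+3$ (contributing $v^3D(x,vw)$), and $j+4\le i\le n$ (contributing $R^+$); in the last three pieces one only rewrites $v^iw^j=v^{i-j}(vw)^j$.

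With this in hand, the proof reduces to establishing the four component formulas. Equations \eqref{1324x1423e3} and \eqref{1324x1423e4} are triangular, so summing them against the appropriate monomials gives $C(x,v)$ and $E(x,v)$ outright, namely \eqref{eq13241423AA2} and \eqref{eq13241423AA1}; the computation of $E$ is the displayed geometric-series manipulation. Summing \eqref{1324x1423e1} against $v^ix^n$ and \eqref{1324x1423e2} against $v^iw^jx^n$, and inserting the now-known $C$ and $E$, produces the coupled linear system \eqref{eq13241423AM2}--\eqref{eq13241423AM1} for the two remaining unknowns $D(x,v)$ and $R^+(x,v,w)$. I would then decouple it by the kernel method applied twice: replacing $w$ by $w/v$ in \eqref{eq13241423AM1} and choosing $v=v_0(x,w)$ to annihilate the coefficient of $R^+(x,v,w/v)$ yields \eqref{eq13241423AN1}, expressing $R^+(x,1,w)$ through $D(x,w)$; substituting this into \eqref{eq13241423AM2} leaves a single equation \eqref{eq13241423AN2} for $D(x,v)$ whose kernel vanishes at $v=v_1(x)$, and evaluating there isolates $D(x,1)$ as in \eqref{eq13241423DD1}. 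Back-substitution gives the explicit $D(x,v)$ of \eqref{eq13241423ANM2}, then \eqref{eq13241423AN1} gives $R^+(x,1,v)$ and \eqref{eq13241423AM1} gives the full $R^+(x,v,w)$, and feeding all four pieces into the decomposition identity completes the proof.

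The conceptual content is thus light; the work lies entirely in the bookkeeping. The main obstacle is translating the nested triple and quadruple sums in \eqref{1324x1423e1}--\eqref{1324x1423e2} into functional equations: interchanging orders of summation is what creates the $\tfrac{1}{1-v}$ factors and the ``diagonal minus full'' combinations such as $R^+(x,1,1)-R^+(x,1,v)$, and an off-by-one in a summation bound would corrupt everything downstream. After that, the only real difficulty is carrying the two nested algebraic functions $v_0(x,w)$ and $v_1(x)$ through the rational manipulations without error. Because the resulting expressions for $D(x,v)$ and $R^+(x,v,w)$ are large, the most reliable way to finish is to substitute the claimed closed forms back into \eqref{eq13241423AM2}--\eqref{eq13241423AM1} and the two kernel identities and verify the identities directly, a check that is routine by computer algebra.
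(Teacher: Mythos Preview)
Your proposal is correct and follows essentially the same route as the paper: the decomposition of $R$ into $R^+$, $C$, $D$, $E$ is justified by the index partition you describe, $C$ and $E$ are computed directly, and the coupled system \eqref{eq13241423AM2}--\eqref{eq13241423AM1} is solved by the two kernel substitutions $v_0$ and $v_1$ followed by back-substitution. The only addition relative to the paper is that you spell out the index partition explicitly rather than saying ``Clearly,'' which is a harmless clarification.
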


As a corollary to this result, one can show (with the aid of programming) that
\begin{align*}
vx+R(x,1,1;v,q)&=\frac{vx}{1-vqx}+\frac{vx(vqx-v-x)t(xv)}{2(vq^2x^2-vqx^2-qx+vx-v-x+1)(vqx-vx-1)}\\
&+\frac{(x+(qx^2+qx+3x^2-2x-1)v)vx}{2(vqx-vx-1)(vqx-1)(vq^2x^2-vqx^2-qx+vx-v-x+1)}\\
&-\frac{(2q^2x^2+3q^2x-qx^2-qx-3q+2x-1+(1-2q)(qx-1)vqx)v^3x^2}{2(vqx-vx-1)(vqx-1)(vq^2x^2-vqx^2-qx+vx-v-x+1)}\\
&=\frac{vx}{1-vqx}+U(x;v,1;q),
\end{align*}
which establishes the desired equivalence of distributions in the case of $(1324,1423)$.

\section{The case $(1342,1423)$.}

To enumerate members of this class according to the number of descents, we consider a more general version rather than a refinement of the problem at hand.  Given $n \geq 1$, $1 \leq m \leq n$ and $1 \leq i \leq n$, let $\mathcal{D}_n(i,m)$ denote the subset of $\mathcal{S}_{n,i}(1342,1423)$ consisting of those members in which the letters $n,n-1,\ldots,n-m+1$ form a decreasing subsequence.  Let $\mathcal{D}_n(m)=\cup_{i=1}^n\mathcal{D}_n(i,m)$ for $n \geq 1$ and $m\in [n]$.  Define the distributions $d_n(i,m)=d_n(i,m;q)$ and $D_n(m)=D_n(m;q)$ by
$$d_n(i,m)=\sum_{\pi\in\mathcal{D}_n(i,m)}q^{\text{desc}(\pi)}$$
and
$$D_n(m)=\sum_{\pi\in\mathcal{D}_n(m)}q^{\text{desc}(\pi)}.$$
Note that $D_n(m)=\sum_{i=1}^nd_n(i,m)$, by the definitions.

To aid in finding a recurrence for $d_n(i,m)$, we consider the subset of $\mathcal{D}_{n}(m)$ comprising its indecomposable members.  Let $\mathcal{E}_n(m)$ denote the subset of $\mathcal{D}_n(m)$ consisting of those $\sigma$ which cannot be decomposed as $\sigma=\sigma'\sigma''$ where $\sigma'$ contains $[n-m+1,n]$ and $\sigma''$ is a permutation of $[a]$ for some $a \geq 1$. Define $e_n(m)=e_n(m;q)$ by
  $$e_n(m)=\sum_{\pi\in\mathcal{E}_n(m)}q^{\text{desc}(\pi)}.$$

Note that $d_n(i,m)$ may assume nonzero values only when $n \geq 1$ and $i,m \in [n]$.  Further, we have $d_n(i,m)=0$ if $n-m+1 \leq i \leq n-1$, since elements of $[n-m+1,n]$ must decrease, with $d_n(n,n)=q^{n-1}=e_n(n)$ and $d_n(i,n)=0$ if $i<n$.  One may verify directly the following values of $d_n(i,m)$ and $e_n(m)$ when $n=3$: $d_3(1,1)=q+1$, $d_3(1,2)=q$, $d_3(1,3)=0$; $d_3(2,1)=2q$, $d_3(2,2)=d_3(2,3)=0$; $d_3(3,1)=d_3(3,2)=q+q^2$, $d_3(3,3)=q^2$, with $e_3(1)=1+2q$, $e_3(2)=2q$ and $e_3(3)=q^2$.  It can be shown in general that $d_n(1,n-1)=q^{n-2}$ and $d_n(n,n-1)=(n-2)q^{n-2}+q^{n-1}$ for $n \geq 2$, with $e_n(n-1)=(n-1)q^{n-2}$.

The $d_n(i,m)$ and $e_n(m)$ satisfy the following system of intertwined recurrences.

\begin{lemma}
If $n \geq 1$, then
\begin{equation}\label{a13421423rec1}
  e_n(m)=D_n(m)-q\sum_{a=1}^{n-m}e_{n-a}(m)D_a(1), \quad 1 \leq m \leq n.
  \end{equation}
If $n \geq 2$, then
 \begin{equation}\label{a13421423rec2}
  d_n(n,m)=qD_{n-1}(m-1), \quad 2 \leq m \leq n,
  \end{equation}
  with $d_n(n,1)=qD_{n-1}(1)$.
   If $n \geq 3$, then
  \begin{equation}\label{a13421423rec3}
  d_n(n-m,m)=qD_{n-2}(m-1)+q\sum_{j=1}^{n-m-1}d_{n-1}(j,m), \quad 2 \leq m \leq n-1,
  \end{equation}
  with $d_n(n-1,1)=qD_{n-2}(1)+q\sum_{j=1}^{n-2}d_{n-1}(j,1)$.
  If $3 \leq m+2 \leq n$ and $1 \leq i \leq n-m-1$, then
  \begin{align}
  d_n(i,m)&=d_{n-1}(i,m)+qD_{n-2}(n-i-1)+q\sum_{j=1}^{i-1}d_{n-1}(j,m)\notag\\
  &\quad+q\sum_{j=i+2}^{n-m}\sum_{a=0}^{i-1}e_{j-a-2}(j-i-1)d_{n-j+a+1}(a+1,m), \label{a13421423rec4}
  \end{align}
    with $d_n(i,n)=\delta_{i,n}\cdot q^{n-1}$ for $1 \leq i \leq n$, $d_n(i,m)=0$ for $n-m+1 \leq i \leq n-1$ and $d_2(1,1)=1$, $d_2(2,1)=q$.
  \end{lemma}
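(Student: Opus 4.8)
The plan is to prove all four identities by direct combinatorial decompositions, exploiting two structural facts about avoidance of $\{1342,1423\}$. \emph{Fact 1.} Since both forbidden patterns begin with their smallest entry, a permutation $\pi\in\mathcal S_n$ with $\pi_1=i$ avoids $\{1342,1423\}$ if and only if the subword of $\pi$ on the letters $\{i+1,\dots,n\}$ avoids $\{231,312\}$ \emph{and} the word $\pi_2\cdots\pi_n$ (as a permutation of $[n]\setminus\{i\}$) avoids $\{1342,1423\}$; moreover a $\{231,312\}$-avoider $\mu$ has the nested form $\mu=\mu'\,M\,\mu''$, where $M$ is its largest letter and every letter of $\mu'$ is smaller than every letter of $\mu''$, with $\mu',\mu''$ again $\{231,312\}$-avoiders. \emph{Fact 2.} The class $\mathcal S(1342,1423)$ is closed under the skew sum $\ominus$ (no occurrence of a pattern beginning with its minimum can straddle a skew sum), and $\mathrm{desc}(\alpha\ominus\beta)=\mathrm{desc}(\alpha)+\mathrm{desc}(\beta)+1$; more generally, a single descent is created whenever a block of larger letters is concatenated immediately in front of a block of smaller letters. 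I also use throughout that deleting the leftmost letter of a permutation, or its two leftmost letters, changes $\mathrm{desc}$ by an amount that is visible from the local shape.

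For \eqref{a13421423rec1} I use the decomposition into maximal skew-indecomposable prefix: $\sigma\in\mathcal D_n(m)$ either is skew-indecomposable in the prescribed sense and hence lies in $\mathcal E_n(m)$, or factors uniquely as $\sigma=\tau\ominus\rho$ with $\tau\in\mathcal E_{n-a}(m)$ and $\rho\in\mathcal D_a(1)$; the range $1\le a\le n-m$ is forced because $n,n-1,\dots,n-m+1$, being the $m$ largest letters, all lie in $\tau$, and the decreasing condition on them holds for $\sigma$ iff it holds for the corresponding letters of $\tau$. By Fact 2 the join contributes $q$, so $D_n(m)=e_n(m)+q\sum_{a=1}^{n-m}e_{n-a}(m)D_a(1)$, which is \eqref{a13421423rec1}. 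For \eqref{a13421423rec2}, delete $\sigma_1=n$: being the largest letter in first position it lies in no occurrence of $1342$ or $1423$, so $\sigma$ avoids the pair iff its standardized tail $\sigma'$ of length $n-1$ does; the block $n,\dots,n-m+1$ of $\sigma$ becomes the $m-1$ largest letters of $\sigma'$, so $\sigma'\in\mathcal D_{n-1}(m-1)$ (arbitrary first letter) and $\mathrm{desc}(\sigma)=\mathrm{desc}(\sigma')+1$, yielding $d_n(n,m)=qD_{n-1}(m-1)$ (the case $m=1$ is identical, with the vacuous block condition giving $qD_{n-1}(1)$). For \eqref{a13421423rec3}, with $\sigma_1=n-m$, split on position $1$: if $\sigma_1>\sigma_2$, deleting $\sigma_1$ gives $\sigma'\in\mathcal D_{n-1}(j,m)$ with $j=\sigma_2\in\{1,\dots,n-m-1\}$ and destroys exactly one descent, and the reverse construction is always legal because the letters exceeding $n-m$ in $\sigma'$ are its $m$ largest, which are decreasing and so avoid $\{231,312\}$ (Fact 1); if $\sigma_1<\sigma_2$, then $\sigma_2=n$ is forced, since $\sigma_2\in\{n-1,\dots,n-m+1\}$ would require $n$ to occur before position $2$, and deleting both $\sigma_1$ and $\sigma_2=n$ yields a member of $\mathcal D_{n-2}(m-1)$ with one descent removed, giving $qD_{n-2}(m-1)$ (again $m=1$ is identical, with $D_{n-2}(1)$ in place of the vacuous $D_{n-2}(0)$).

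The identity \eqref{a13421423rec4} is the heart of the matter. Here $\sigma\in\mathcal D_n(i,m)$ with $i\le n-m-1<n-m$, and I partition according to the second letter, which by the impossibility argument above lies in exactly one of $\{1,\dots,i-1\}$, $\{i+1\}$, $\{i+2,\dots,n-m\}$, $\{n\}$. If $\sigma_2<i$: deleting $\sigma_1$ destroys one descent and the relabelled large-letter condition is automatic, because $\sigma_1$ itself would serve as a ``$1$'' in any offending $231$ or $312$; this case contributes $q\sum_{j=1}^{i-1}d_{n-1}(j,m)$. If $\sigma_2=i+1$: deleting $\sigma_1$ now spans an ascent, leaving $\mathrm{desc}$ unchanged and producing $d_{n-1}(i,m)$. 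If $\sigma_2=n$: the subword of $\sigma$ on the letters $>i$ begins with $n$, which by Fact 1 forces the letters $i+2,\dots,n-1$ occurring after it to be decreasing, so deleting $\sigma_1$ and $\sigma_2=n$ yields a permutation whose top $n-i-1$ letters are decreasing, i.e. a member of $\mathcal D_{n-2}(n-i-1)$, with one descent removed, giving $qD_{n-2}(n-i-1)$. The remaining case $i+2\le\sigma_2\le n-m$ is where the work lies: using the nested form of the large-letter subword, $\sigma$ must decompose as $\sigma=(i)\,\beta\,\gamma$ with a single descent at the juncture of $\beta$ and $\gamma$, where $\beta$ (after standardization) is an arbitrary skew-indecomposable permutation of length $\sigma_2-a-2$ whose top $\sigma_2-i-1$ letters are decreasing, $\gamma$ is an arbitrary member of $\mathcal D_{n-\sigma_2+a+1}(a+1,m)$, and $a\in\{0,\dots,i-1\}$ records how the $i-1$ letters below $i$ split between $\beta$ and $\gamma$; writing $j=\sigma_2$ and summing over $j$ and $a$ reproduces the double sum.

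I expect this last case to be the main obstacle: one must (i) show that avoidance of $1342$ and $1423$ forces exactly this block shape and locates the boundaries of $\beta$ and $\gamma$ from the nested structure of the large letters; (ii) check that $\beta$ may be taken to be \emph{any} indecomposable permutation of the stated length with the stated top block, and $\gamma$ \emph{any} member of the indicated $\mathcal D$-class; (iii) verify that the three pieces recombine without creating a forbidden pattern; and (iv) confirm that precisely one descent appears at the juncture. Everything else, including the boundary and initial values ($d_n(i,n)=\delta_{i,n}q^{n-1}$, $d_n(i,m)=0$ for $n-m+1\le i\le n-1$ — where $n$ would be forced to precede the first position — and $d_2(1,1)=1$, $d_2(2,1)=q$), is immediate from the definitions.
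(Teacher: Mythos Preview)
Your plan follows the paper's approach throughout: skew-indecomposable-prefix decomposition for \eqref{a13421423rec1}, first-letter (or first-two-letter) deletion for \eqref{a13421423rec2}--\eqref{a13421423rec3}, and a second-letter case split for \eqref{a13421423rec4}. Two corrections are in order, neither fatal.

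First, part (b) of your Fact~1 is not an ``iff'': for a $\{231,312\}$-avoider $\mu=\mu'M\mu''$ the suffix $\mu''$ must in fact be \emph{decreasing}, not merely an avoider with $\min\mu''>\max\mu'$ (e.g.\ $\mu=1423$ has $\mu'=1$, $\mu''=23$, both avoiders with $1<2,3$, yet $423$ is a $312$). You only invoke this structure in the $\sigma_2=n$ subcase, where what you actually use is the decreasing-tail consequence, so the argument survives once the statement is repaired.

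Second, your literal decomposition $\sigma=(i)\,\beta\,\gamma$ in the subcase $j:=\sigma_2\in[i+2,n-m]$ is inconsistent: since $j$ occupies position~$2$ it must lie in $\beta$, yet your length count $|\beta|=j-a-2=(i-1-a)+(j-i-1)$ leaves no room for it. The paper's resolution is to take the \emph{shortest} prefix $\alpha c$ of $\pi$ whose letter set is exactly an interval $[a+1,j]$; one then strips $i$ and $j$ from $\alpha c$ to obtain (after standardisation) the member of $\mathcal E_{j-a-2}(j-i-1)$, and \emph{prepends} $j$ to the remaining suffix to obtain (after standardisation) the member of $\mathcal D_{n-j+a+1}(a+1,m)$. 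The minimality of the prefix is what forces indecomposability of the first factor.

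The existence of such an interval prefix $[a+1,j]$ is precisely your item~(i) and is the only substantive work in the lemma; the paper establishes it by an induction on the left-to-right-minima block structure of $\pi$, showing that if the unit containing $i+1$ does not already terminate at such a boundary, the problem recurs on a strictly shorter suffix. With these two fixes your outline becomes the paper's proof.
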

\begin{proof}
The conditions above stated last may be verified using the definitions.  To show \eqref{a13421423rec1}, note that $\pi \in \mathcal{D}_n(m)$ is either indecomposable or of the form $\pi=\pi'\pi''$, where $\pi'$ contains $[n-m+1,n]$ and $\pi''$ is a permutation of $[a]$ for some $1 \leq a \leq n-m$ with $a$ maximal.  The range of $a$ implies that both $\pi'$ and $\pi''$ are nonempty and hence a descent occurs between the last letter of $\pi'$ and the first of $\pi''$.  Considering all possible $a$ then yields \eqref{a13421423rec1}.  Formula \eqref{a13421423rec2} follows from removing $n$ from $\pi \in \mathcal{D}_{n}(n,m)$, which results in a member of $\mathcal{D}_{n-1}(m-1)$ if $m \geq 2$ or of $\mathcal{D}_{n-1}(1)$ if $m=1$.  To show \eqref{a13421423rec3}, first note that $\pi \in \mathcal{D}_n(n-m,m)$ for $n \geq 3$ must have second letter $j<n-m$ or $j=n$, for $j \in [n-m+1,n-1]$ is disallowed as the elements of $[n-m+1,n]$ are to decrease.  If $j<n-m$, then removal of $n-m$ from $\pi$ results in a member of $\mathcal{D}_{n-1}(j,m)$, whereas if $j=n$, then removal of both $n-m$ and $n$ yields a member of $\mathcal{D}_{n-2}(m-1)$ if $m\geq 2$ or of $\mathcal{D}_{n-2}(1)$ if $m=1$.  Since a removed letter is part of a descent in either case, formula \eqref{a13421423rec3} follows from considering all possible values of $j$.

To show \eqref{a13421423rec4}, suppose $\pi \in \mathcal{D}_n(i,m)$ where $n \geq m+2$ and $1 \leq i \leq n-m-1$.  If $\pi$ starts $i,j$ for some $j<i$, or starts $i(i+1)$, then there are clearly $q\sum_{j=1}^{i-1}d_{n-1}(j,m)$ and $d_{n-1}(i,m)$ possibilities, respectively.  If $\pi$ starts with $i,n$, then both letters may be deleted, wherein it is required that the elements of $[i+1,n-1]$ decrease.  Since $i<n-m$, we have that $[n-m+1,n-1]$ is contained in $[i+1,n-1]$ and thus there are $qD_{n-2}(n-i-1)$ possibilities for the remaining letters of $\pi$, where the factor of $q$ accounts for the descent of $\pi$ arising due to $n$.  This completes the proof of \eqref{a13421423rec4} in the case when $i=n-m-1$, so assume $i<n-m-1$.  In this case, it is possible for the second letter to be greater than $i$, but equal to neither $i+1$ nor $n$.

So suppose $\pi \in \mathcal{D}_n(i,m)$ where $1 \leq i \leq n-m-2$ has second letter $j \in [i+2,n-m]$.  Before enumerating this case, we establish a certain decomposition of $\pi$.  Note first that the elements of $[i+1,j-1]$ must occur (in decreasing order) prior to any elements of $[j+1,n]$, for otherwise a $1423$ or $1342$ would occur.  Let $\mathcal{C}$ denote the section of $\pi$ starting with $i+1$ and ending with the predecessor of the leftmost element of $[j+1,n]$.  We claim that for some $c \in \mathcal{C}$, it must be the case that $\pi$ can be decomposed as $\pi=\alpha c \beta$, where the section $\alpha c$ comprises all elements in an interval of the form $[d,j]$ for some $d \in [i]$.  Note that if $\pi$ starts $ij(j-1)(j-2)\cdots(i+1)$, then one may take $c=i+1$ and $d=i$, so assume that this is not the case. Let $\pi=\ell_1\rho_1\cdots \ell_r\rho_r$, where $\ell_1,\ldots,\ell_r$ denotes the complete set of left-right minima of $\pi$ and the $\rho_i$ are possibly empty.  By a \emph{unit} of $\pi$, we mean a section $\ell_i\rho_i$ for some $1 \leq i \leq r$.  By the assumption above on $\pi$, it must be the case that $i+1$ belongs to some unit $\mathcal{U}$ having first letter $u$ where $u \in [i-1]$.

We consider cases based on whether or not a member of $[j+1,n]$ lies in $\mathcal{U}$.  If so, then any elements of $[u+1,i-1]$ must occur prior to $z$, where $z$ denotes the leftmost letter in $[j+1,n]$, for otherwise $u(i+1)zt$ for some $t\in [u+1,i-1]$ would be a $1342$.  Then the predecessor of $z$ would be the desired letter $c$ in this case, with $\alpha c$ comprising the interval $[u,j]$.  So assume all elements of $[j+1,n]$ occur strictly to the right of $u'$, where $u'$ denotes the last letter of $\mathcal{U}$.  If $u'$ fits the criterion for $c$, then we are done.  Otherwise, $u'$ is followed by a left-right minima $v<u$, with at least one element of $[u+1,i-1]$ occurring to the right of $v$.  Let $S=\{s_1,s_2,\ldots,s_r\}$ denote the set of such elements in $[u+1,i-1]$, which must occur in decreasing order, for otherwise there would be an occurrence of $1423$ starting with $u(i+1)$.

The problem then essentially starts anew as the elements of $S$ must decrease amongst letters in $[v]$ and occur prior to any letters in $[j+1,n]$ (for otherwise, a 1342 would occur starting with $u(i+1)$).  Note that the roles of $i$ and $j$ in the original problem are now roughly played by $u$ and $i+1$.  Upon relabeling the $s_i$ as members of $[u,u+r-1]$, the new subproblem is equivalent to considering permutations of $[u+r+n-j-1]$ starting with $v \in [u-1]$ in which the elements of $[u,u+r-1]$ must decrease and occur prior to any elements of $[u+r,u+r+n-j-1]$.  Since there are strictly fewer letters in the smaller subproblem, one can proceed inductively.  Let $\pi'$ denote the section of $\pi$ to the right of and including $v$ and assume letters in $\pi'$ are relabeled as elements of $[u+r+n-j-1]$.  By the induction hypothesis, there exists some $c'$ such that $\pi'$ may be written as $\pi'=\alpha'c'\beta'$, where $\alpha'c'$ comprises an interval of the form $[d',u+r-1]$ for some $d'\in[v]$.  Then within $\pi$, one may verify that $c'$ furnishes the desired element $c$, which completes the induction.

Consider the leftmost $c$ for which $\pi$ may be decomposed as $\pi=\alpha c \beta$ such that $\alpha c$ is of the form $[a+1,j]$ for some $0 \leq a \leq i-1$.  Then $\alpha c-\{i,j\}$ has length $j-a-2$, with $[i+1,j-1]$ decreasing so that it is enumerated by $e_{j-a-2}(j-i-1)$.  Note that $\beta$ comprises the letters in $[a]\cup[j+1,n]$ and effectively starts with $a+1$ since $j$ occurs to the left of all the letters in $[j+1,n]$.  As there are no other restrictions on $j\beta$, it is enumerated by $d_{n-j+a+1}(a+1,m)$ since the elements of $[n-m+1,n]$ are required to decrease.  Note there is an additional descent that may be attributed to the successor of $j$ in $\pi$, which must belong to $[i-1]\cup\{j-1\}$.  Thus, considering all possible $a$ and $j$ yields $q\sum_{j=i+2}^{n-m}\sum_{a=0}^{i-1}e_{j-a-2}(j-i-1)d_{n-j+a+1}(a+1,m)$ additional members of $\mathcal{D}_n(i,m)$ and combining this with the previous cases yields \eqref{a13421423rec4} and completes the proof.
\end{proof}

We define the generating functions
$$D(x,v,w)=\sum_{n\geq1}\sum_{m=1}^n\sum_{i=1}^nd_n(i,m)v^iw^{m-1}x^n$$ and
$$E(x,v)=\sum_{n\geq1}\sum_{m=1}^ne_n(m)v^{m-1}x^n.$$
We spilt $D(x,v,w)$ into three further generating functions as follows:
$$D_1(x,v,w)=\sum_{n\geq3}\sum_{m=1}^{n-2}\sum_{i=1}^{n-m-1}d_n(i,m)v^iw^{m-1}x^n,$$
$$D_2(x,v,w)=\sum_{n\geq2}\sum_{m=1}^{n-1}d_n(n-m,m)v^{n-m}w^{m-1}x^n$$
and
$$D_3(x,v,w)=\sum_{n\geq1}\sum_{m=1}^{n}d_n(n,m)v^nw^{m-1}x^n.$$  Note that
$$D(x,v,w)=D_1(x,v,w)+D_2(x,v,w)+D_3(x,v,w).$$

By \eqref{a13421423rec1}, we have
\begin{align*}
E(x,v)&=\sum_{n\geq1}\sum_{m=1}^ne_n(m)v^{m-1}x^n\\
&=\sum_{n\geq1}\sum_{m=1}^nD_n(m)v^{m-1}x^n-q
\sum_{n\geq1}\sum_{m=1}^n\sum_{a=1}^{n-m}e_{n-a}(m)D_a(1)v^{m-1}x^n\\
&=D(x,1,v)-q\sum_{n\geq2}\sum_{a=1}^{n-1}\sum_{m=1}^{n-a}e_{n-a}(m)D_a(1)v^{m-1}x^n\\
&=D(x,1,v)-qE(x,v)D(x,1,0).
\end{align*}
By \eqref{a13421423rec2}, we have
\begin{align*}
D_3(x,v,w)&=\sum_{n\geq1}d_n(n,1)v^nx^n+\sum_{n\geq2}\sum_{m=2}^nd_n(n,m)v^nw^{m-1}x^n\\
&=qvxD(vx,1,0)+vx+qvwx\sum_{n\geq1}\sum_{m=1}^{n}\sum_{i=1}^nd_n(i,m)v^nw^{m-1}x^n\\
&=qvxD(vx,1,0)+vx+qvwxD(vx,1,w).
\end{align*}

By \eqref{a13421423rec3}, we have
\begin{align*}
D_2(x,v,w)&=\sum_{n\geq2}d_n(n-1,1)v^{n-1}x^n+\sum_{n\geq3}\sum_{m=2}^{n-1}
d_n(n-m,m)v^{n-m}w^{m-1}x^n\\
&=vx^2+\sum_{n\geq3}\left(q\sum_{i=1}^{n-2}d_{n-2}(i,1)
+q\sum_{i=1}^{n-2}d_{n-1}(i,1)\right)v^{n-1}x^n\\
&+\sum_{n\geq3}\sum_{m=2}^{n-1}\left(q\sum_{i=1}^{n-2}d_{n-2}(i,m-1)
+q\sum_{j=1}^{n-1-m}d_{n-1}(j,m)\right)v^{n-m}w^{m-1}x^n\\
&=vx^2+qvx^2\sum_{n\geq1}\sum_{i=1}^nd_n(i,1)v^nx^n
+qx\sum_{n\geq1}\sum_{i=1}^nd_n(i,1)v^nx^n-qx\sum_{n\geq1}d_n(n,1)v^nx^n\\
&+\sum_{n\geq3}\sum_{m=2}^{n-1}\left(q\sum_{i=1}^{n-2}d_{n-2}(i,m-1)
+q\sum_{j=1}^{n-1-m}d_{n-1}(j,m)\right)v^{n-m}w^{m-1}x^n\\
&=(1-q)vx^2+qx(1+(1-q)vx)D(vx,1,0)\\
&+\sum_{n\geq3}\sum_{m=2}^{n-1}\left(q\sum_{i=1}^{n-2}d_{n-2}(i,m-1)
+q\sum_{j=1}^{n-1-m}d_{n-1}(j,m)\right)v^{n-m}w^{m-1}x^n\\
&=(1-q)vx^2+qx(1+(1-q)vx)D(vx,1,0)\\
&+qvwx^2\sum_{n\geq1}\sum_{m=1}^n\sum_{i=1}^nd_n(i,m)v^{n-m}w^{m-1}x^n+q\sum_{n\geq3}\sum_{m=2}^{n-1}\sum_{j=1}^{n-1-m}d_{n-1}(j,m)v^{n-m}w^{m-1}x^n\\
&=(1-q)vx^2+qx(1+(1-q)vx)D(vx,1,0)+qwx^2D(vx,1,w/v)\\
&+q\sum_{n\geq3}\sum_{m=2}^{n-1}\sum_{j=1}^{n-2}d_{n-1}(j,m)v^{n-m}w^{m-1}x^n\\
&=(1-q)vx^2+qx(1+(1-q)vx)D(vx,1,0)+qwx^2D(vx,1,w/v)\\
&+qvx\sum_{n\geq2}\sum_{m=1}^n\sum_{j=1}^{n-1}d_n(j,m)v^{n-m}w^{m-1}x^n-qvx\sum_{n\geq2}\sum_{j=1}^{n-1}d_n(j,1)v^{n-1}x^n\\
&=(1-q)vx^2+qx(1+(1-q)vx)D(vx,1,0)+qwx^2D(vx,1,w/v)\\
&+qx\sum_{n\geq2}\sum_{m=1}^n\sum_{j=1}^{n-m}d_n(j,m)(w/v)^{m-1}(vx)^n-qx\sum_{n\geq1}\sum_{j=1}^nd_n(j,1)(vx)^n\\
&+qx\sum_{n\geq1}d_n(n,1)(vx)^n\\
&=(1-q)vx^2+qx(1+(1-q)vx)D(vx,1,0)+qwx^2D(vx,1,w/v)\\
&+qx(D_1(vx,1,w/v)+vD_2(x,v,w))-qxD(vx,1,0)+qx(qvxD(vx,1,0)+vx)\\
&=vx^2+qvx^2D(vx,1,0)+qwx^2D(vx,1,w/v)+qx(D_1(vx,1,w/v)+vD_2(x,v,w)).
\end{align*}

By \eqref{a13421423rec4}, we have
\begin{align*}
D_1(x,v,w)&=\sum_{n\geq3}\sum_{m=1}^{n-2}\sum_{i=1}^{n-1-m}d_n(i,m)v^iw^{m-1}x^n\\
&=\sum_{n\geq3}\sum_{m=1}^{n-2}\sum_{i=1}^{n-1-m}\left(
d_{n-1}(i,m)+q\sum_{j=1}^{n-2}d_{n-2}(j,n-i-1)+q\sum_{j=1}^{i-1}d_{n-1}(j,m)\right)v^iw^{m-1}x^n\\
&+\sum_{n\geq3}\sum_{m=1}^{n-2}\sum_{i=1}^{n-1-m}\left(
q\sum_{j=i+2}^{n-m}\sum_{a=0}^{i-1}e_{j-2-a}(j-1-i)d_{n-j+a+1}(a+1,m)\right)v^iw^{m-1}x^n\\
&=x(D_1(x,v,w)+D_2(x,v,w)) +\frac{qvx^2}{1-w}\sum_{n\geq1}\sum_{i=1}^n\sum_{a=1}^nd_n(a,i)v^{n-i}(1-w^i)x^n\\
&+\frac{qvx}{1-v}\sum_{n\geq3}\sum_{m=1}^{n-2}\sum_{j=1}^{n-1-m}
d_n(j,m)(v^j-v^{n-m})w^{m-1}x^n\\
&+qx^2\sum_{m\geq1}\sum_{n\geq1}\sum_{j=1}^n\sum_{a=0}^{j-1}\sum_{i=a+1}^j e_{j-a}(j+1-i)d_{n+m-j+a+1}(a+1,m)v^iw^{m-1}x^{n+m}\\
&=x(D_1(x,v,w)+D_2(x,v,w))+\frac{qx^2}{1-w}(D(vx,1,1/v)-wD(vx,1,w/v))\\
&+\frac{qx}{1-v}(vD_1(x,v,w)-D_1(vx,1,w/v))\\
&+qx^2\sum_{m\geq1}\sum_{a\geq1}\sum_{j\geq1}\sum_{n\geq j+m}\sum_{i=1}^ae_a(i)v^{-i}d_n(j,m)v^{j+a}w^{m-1}x^{n+a-1}\\
&=x(D_1(x,v,w)+D_2(x,v,w))+\frac{qx^2}{1-w}(D(vx,1,1/v)-wD(vx,1,w/v))\\
&+\frac{qx}{1-v}(vD_1(x,v,w)-D_1(vx,1,w/v))\\
&+qx\sum_{m\geq1}\sum_{a\geq1}\sum_{j\geq1}\sum_{n\geq j+m+a}\sum_{i=1}^ae_a(i)v^{-i}d_{n-a}(j,m)v^{j+a}w^{m-1}x^n\\
&=x(D_1(x,v,w)+D_2(x,v,w))+\frac{qx^2}{1-w}(D(vx,1,1/v)-wD(vx,1,w/v))\\
&+\frac{qx}{1-v}(vD_1(x,v,w)-D_1(vx,1,w/v))\\
&+qx\sum_{n\geq3}\sum_{a=1}^{n-2}\left(\sum_{i=1}^ae_a(i)v^{-i}
\sum_{m=1}^{n-1-a}\sum_{j=1}^{n-a-m}d_{n-a}(j,m)v^{j+a}w^{m-1}\right)x^n\\
&=x(D_1(x,v,w)+D_2(x,v,w))+\frac{qx^2}{1-w}(D(vx,1,1/v)-wD(vx,1,w/v))\\
&+\frac{qx}{1-v}(vD_1(x,v,w)-D_1(vx,1,w/v))+\frac{qx}{v}E(vx,1/v)(D_1(x,v,w)+D_2(x,v,w)).
\end{align*}

Hence, there is the following system of functional equations.

\begin{lemma}\label{lem13421423a}
We have $D(x,v,w)=D_1(x,v,w)+D_2(x,v,w)+D_3(x,v,w)$, where
\begin{align*}
E(x,v)&=\frac{D(x,1,v)}{1+qD(x,1,0)},\\
D_1(x,v,w)&=x(D_1(x,v,w)+D_2(x,v,w))+\frac{qx^2}{1-w}(D(vx,1,1/v)-wD(vx,1,w/v))\\
&+\frac{qx}{1-v}(vD_1(x,v,w)-D_1(vx,1,w/v))+\frac{qx}{v}E(vx,1/v)(D_1(x,v,w)+D_2(x,v,w)),\\
D_2(x,v,w)&=vx^2+qvx^2D(vx,1,0)+qwx^2D(vx,1,w/v)+qx(D_1(vx,1,w/v)+vD_2(x,v,w)),\\
D_3(x,v,w)&=qvxD(vx,1,0)+vx+qvwxD(vx,1,w).
\end{align*}
\end{lemma}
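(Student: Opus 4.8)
The plan is to obtain Lemma~\ref{lem13421423a} by translating each of the four recurrences \eqref{a13421423rec1}--\eqref{a13421423rec4} of the previous lemma directly into a generating-function identity, exploiting the decomposition $D(x,v,w)=D_1(x,v,w)+D_2(x,v,w)+D_3(x,v,w)$ together with two bookkeeping observations: summing $d_n(i,m)$ over $i$ collapses the weight $v^i$ to $1$, so $\sum_{n\geq1}\sum_{m=1}^n D_n(m)v^{m-1}x^n=D(x,1,v)$; and the various one-variable specializations that appear on the right-hand sides ($w=0$, $v=1$, the substitutions $w\mapsto w/v$ or $v\mapsto1/v$, and the rescaling $x\mapsto vx$) are precisely what is needed to recover the partial sums produced by deleting the letter $i$ or the pair $i,n$ from a permutation. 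I would carry out the four translations in the order $E$, then $D_3$, then $D_2$, then $D_1$, since each stage reuses identities established at the previous one.

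First, for $E(x,v)$: multiplying \eqref{a13421423rec1} by $v^{m-1}x^n$ and summing, the left side is $E(x,v)$, the term $D_n(m)$ contributes $D(x,1,v)$, and the convolution $q\sum_{a=1}^{n-m}e_{n-a}(m)D_a(1)$ factors as $qE(x,v)D(x,1,0)$ once one recognizes $\sum_{a\geq1}D_a(1)x^a=D(x,1,0)$; rearranging gives $E(x,v)=D(x,1,v)/(1+qD(x,1,0))$. The identities for $D_3$ from \eqref{a13421423rec2} and for $D_2$ from \eqref{a13421423rec3} are similar in spirit but demand careful attention to boundary data: one must peel off the $m=1$ case separately, feed in the initial values $d_2(1,1)=1$ and $d_2(2,1)=q$, and repeatedly add and subtract the ``diagonal'' pieces $d_n(n,m)$ and $d_n(j,1)$ so that the residual double sums reassemble into $D$, $D_1$, $D_2$, $D_3$ evaluated at the shifted arguments. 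This part is routine but error-prone; a sanity check against the tabulated values $d_3(i,m)$ and $e_3(m)$ in the text (via the coefficient of $x^3$) would catch sign or index slips.

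The hard part will be the translation of \eqref{a13421423rec4} into the equation for $D_1$. The first three summands on the right of \eqref{a13421423rec4} go over, respectively, to $x(D_1+D_2)$; to a term of the shape $\tfrac{qx^2}{1-w}\bigl(D(vx,1,1/v)-wD(vx,1,w/v)\bigr)$, where the factor $1/(1-w)$ arises from unfolding $D_{n-2}(n-i-1)$ across the range of $i$ via $\sum_i w^{?}=(1-w^{?})/(1-w)$; and to $\tfrac{qx}{1-v}\bigl(vD_1(x,v,w)-D_1(vx,1,w/v)\bigr)$ by the standard partial-fraction trick $\sum_{j=1}^{i-1}v^j=(v-v^i)/(1-v)$ combined with a diagonal extraction. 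The genuinely delicate piece is the quadruple sum $q\sum_{j=i+2}^{n-m}\sum_{a=0}^{i-1}e_{j-a-2}(j-i-1)\,d_{n-j+a+1}(a+1,m)$: one must introduce new summation indices (essentially $a'=a+1$ for the first letter of the $d$-factor, and $i'=j-1-i$ for the length argument of the $e$-factor) so that the $e$-factor and the $d$-factor decouple into $\tfrac{qx}{v}E(vx,1/v)\cdot\bigl(D_1(x,v,w)+D_2(x,v,w)\bigr)$, using that the constraint forces the $d$-factor to contribute only its $D_1+D_2$ part. The crux is verifying that the exponents of $x$, $v$, $w$ match after this reindexing---in particular that the shift from $x^{n+a-1}$ to $x^n$ is absorbed correctly and that the $v^{-i}$ coming from $E(vx,1/v)$ cancels against $v^{j+a}$. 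Once this is confirmed, collecting the four resulting identities yields exactly the system asserted in Lemma~\ref{lem13421423a}.
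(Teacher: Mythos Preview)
Your proposal is correct and follows essentially the same approach as the paper: the paper's derivation (which appears in the text immediately preceding the lemma rather than as a formal proof after it) multiplies each of the recurrences \eqref{a13421423rec1}--\eqref{a13421423rec4} by the appropriate monomial and sums, handling $E$, $D_3$, $D_2$, $D_1$ in that order with the same boundary peeling and reindexing tricks you outline, including the decoupling of the quadruple sum in \eqref{a13421423rec4} into $\frac{qx}{v}E(vx,1/v)(D_1+D_2)$ via the substitution you describe.
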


In order to solve these equations, we assume
\begin{align*}
{\bf P_1}:&\,\,D(x,v,0)=vx+R(x,v),\\
{\bf P_2}:&\,\,\frac{D_1(x,1,w)}{D_2(x,1,w)}=\frac{1-x-t(x)}{2qx(-qx+x+1)}-1,
\end{align*}
where
\begin{align*}
R(x,v)&=\frac{vx(qvx-v-x)t(vx)}{2((q-1)vx-1)(q(q-1)vx^2-qx+(v-1)(x-1))}\\
&-\frac{vx(2q(q-1)^2v^2x^3-(2q^2-3q+2)v^2x^2+(3-2q^2)vx^2+(3q-2)vx+v^2x-v+x)}{2((q-1)vx-1)(q(q-1)vx^2-qx+(v-1)(x-1))}
\end{align*}
and
$$t(x)=\sqrt{(1-2q)^2x^2-2x(1+2q)+1}.$$
First, we will determine below a potential solution to the system of equations in Lemma \ref{lem13421423a} with the aid of the assumptions ${\bf P_1}$ and ${\bf P_2}$.  One may then verify that this potential solution indeed satisfies the equations in Lemma \ref{lem13421423a} (as well as ${\bf P_1}$ and ${\bf P_2}$), and hence it is the desired solution.  Note that $D(x,v,0)$ is the actual generating function that is sought in this case since it enumerates all members of $\mathcal{D}_n(1)$.

By Lemma \ref{lem13421423a}, we have
\begin{align*}
D(x,1,w)&=D_1(x,1,w)+D_2(x,1,w)+D_3(x,1,w),\\
D_2(x,1,w)&=x^2+qx^2D(x,1,0)+qwx^2D(x,1,w)+qx(D_1(x,1,w)+D_2(x,1,w)),\\
D_3(x,1,w)&=qxD(x,1,0)+x+qwxD(x,1,w).
\end{align*}
Solving this system under ${\bf P_1}$ and ${\bf P_2}$, we get explicit formulas for $D_j(x,1,w)$ for $j=1,2,3$ and $D(x,1,w)$. Hence, by Lemma \ref{lem13421423a}, we have an explicit formula for $E(x,v)$, which is given by  $E(x,v)=\frac{D(x,1,v)}{1+qD(x,1,0)}$.

Therefore, again by Lemma \ref{lem13421423a}, we have
\begin{align*}
&D(x,v,w)=D_1(x,v,w)+D_2(x,v,w)+D_3(x,v,w),\\
&D_1(x,v,w)\\
&=\frac{\frac{qx^2}{1-w}(D(vx,1,1/v)-wD(vx,1,w/v))
-\frac{qx}{1-v}D_1(vx,1,w/v)+x(1+\frac{q}{v}E(vx,1/v))D_2(x,v,w)}{1-x-\frac{qx}{v}E(vx,1/v)-\frac{qvx}{1-v}},\\
&D_2(x,v,w)=\frac{vx^2+qvx^2D(vx,1,0)+qwx^2D(vx,1,w/v)+qxD_1(vx,1,w/v)}{1-qvx},\\
&D_3(x,v,w)=qxvD(vx,1,0)+vx+qvwxD(vx,1,w),
\end{align*}
which leads to explicit formulas for the $D_j(x,v,w)$ and $D(x,v,w)$.

\begin{theorem}\label{th13421423a}
We have $D(x,v,w)=D_1(x,v,w)+D_2(x,v,w)+D_3(x,v,w)$, where
\begin{align*}
D_1(x,v,w)&=\frac{4vx^3(1+q+q(1-q)vx)}{b_1+b_2t(vx)},\\
b_1&=(2q+1)(q+1)wx^2-2(w+1)qx-2x+2
-\bigl((6q^3-q^2-4q+1)wx^2\\
&-(6q^2-3q-1)wx+(2-6q^2)x+6q\bigr)vx\\
&+(2q-1)(q-1)(qx-1)((2q-1)wx-2)v^2x^2,\\
b_2&=(2q-1)(q+1)wx^2-2(w+1)qx-2x+2-(q-1)(qx-1)((2q-1)wx-2)vx,\\
D_2(x,v,w)&=\frac{4vx^2}{2-(4qv+2qw-2v+w)x+vw(2q-1)^2x^2+(2+wx-2qwx)t(vx)},\\
D_3(x,v,w)&=\frac{8vx}{6-w-2v(2qw+2q-1)x+v^2w(2q-1)^2x^2+(w+2-(2q-1)vwx)t(vx)}.
\end{align*}
Moreover,
\begin{align*}
E(x,v)&=\frac{1-x-2qvx+2qx-t(x)}{2q(qv^2x-2qvx+vx-v+2)}.
\end{align*}
\end{theorem}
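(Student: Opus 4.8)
The plan is to treat the two relations ${\bf P_1}$ and ${\bf P_2}$ as an ansatz and to justify them only at the end, by checking that the closed forms they lead to actually solve the (combinatorially proven) system in Lemma~\ref{lem13421423a}. Since the recurrences \eqref{a13421423rec1}--\eqref{a13421423rec4} together with the listed small cases determine every $d_n(i,m)$ and $e_n(m)$ by induction on $n$ (each right-hand side involves only strictly smaller values of $n$, with $e_n(m)$ then expressed through the $d_n(\cdot,\cdot)$), the functional equations of Lemma~\ref{lem13421423a} have a unique formal power series solution; hence a candidate that satisfies them is the solution, and a successful check both proves the theorem and retroactively establishes ${\bf P_1}$ and ${\bf P_2}$.

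First I would specialize Lemma~\ref{lem13421423a} to $v=1$. The $D_1$-equation degenerates there (the $\tfrac{qx}{1-v}$ factor multiplies $vD_1(x,1,w)-D_1(x,1,w)=0$ and is indeterminate), so one discards it and substitutes ${\bf P_2}$, which asserts $D_1(x,1,w)=\lambda(x)\,D_2(x,1,w)$ with $\lambda(x)=\tfrac{1-x-t(x)}{2qx(1-qx+x)}-1$ independent of $w$. Combined with the surviving relations
\begin{align*}
D(x,1,w)&=D_1(x,1,w)+D_2(x,1,w)+D_3(x,1,w),\\
D_2(x,1,w)&=x^2+qx^2D(x,1,0)+qwx^2D(x,1,w)+qx(D_1(x,1,w)+D_2(x,1,w)),\\
D_3(x,1,w)&=qxD(x,1,0)+x+qwxD(x,1,w),
\end{align*}
and the value $D(x,1,0)=x+R(x,1)$ supplied by ${\bf P_1}$, this is a linear system for $D_1(x,1,w),D_2(x,1,w),D_3(x,1,w)$; elimination yields closed expressions, rational in $t(x)$, for each and hence for $D(x,1,w)$. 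The first line of Lemma~\ref{lem13421423a} then gives $E(x,v)=D(x,1,v)/(1+qD(x,1,0))$, which, after simplification using $t(x)^2=(1-2q)^2x^2-2x(1+2q)+1$, is the stated formula for $E(x,v)$.

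Next I would return to the equations displayed just before the theorem statement. Setting $x\mapsto vx$, $v\mapsto 1/v$, $w\mapsto w/v$ as needed in the $v=1$ solution produces explicit expressions for $D(vx,1,0)$, $D(vx,1,1/v)$, $D(vx,1,w/v)$, $D_1(vx,1,w/v)$ and $E(vx,1/v)$. The three equations are then effectively triangular: the $D_2(x,v,w)$-equation has $D_2(x,v,w)$ alone on the left with only known quantities on the right, giving $D_2(x,v,w)$; the $D_1(x,v,w)$-equation then gives $D_1(x,v,w)$ in terms of $D_2(x,v,w)$ and known quantities; the $D_3(x,v,w)$-equation gives $D_3(x,v,w)$; and $D(x,v,w)=D_1+D_2+D_3$. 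Collecting over a common denominator and simplifying with $t(vx)^2=(1-2q)^2v^2x^2-2vx(1+2q)+1$ yields the asserted shapes $D_1=\tfrac{4vx^3(1+q+q(1-q)vx)}{b_1+b_2t(vx)}$, $D_2=\tfrac{4vx^2}{\cdots}$, $D_3=\tfrac{8vx}{\cdots}$.

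The final step, and the one demanding the most care, is the verification: substitute the candidate closed forms for $D_1,D_2,D_3,D,E$ back into all four equations of Lemma~\ref{lem13421423a} and into ${\bf P_1}$ and ${\bf P_2}$, clear the radicals $t(x),t(vx)$ through their quadratic minimal polynomials, and confirm each identity holds; by the uniqueness remark this completes the proof. The genuine subtleties are (i) the singularity of the $v=1$ specialization of the $D_1$-equation, which is exactly why the extra structural input ${\bf P_2}$ is required and must be \emph{checked} rather than derived, and (ii) that ${\bf P_1}$ is essentially the conclusion being sought for $\mathcal{S}_n(1342,1423)$ --- since $D(x,v,0)$ enumerates $\mathcal{D}_n(1)=\mathcal{S}_n(1342,1423)$ by first letter and descents --- so the apparent circularity is broken only by the independent check against the combinatorially established recurrences. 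Everything else is heavy but routine polynomial algebra, best carried out with a computer algebra system.
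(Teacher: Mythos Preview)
Your proposal is correct and follows essentially the same route as the paper: impose ${\bf P_1}$ and ${\bf P_2}$ as an ansatz, specialize Lemma~\ref{lem13421423a} to $v=1$ (where the $D_1$-equation becomes unusable and is replaced by ${\bf P_2}$), solve the resulting linear system for $D_j(x,1,w)$ and $E(x,v)$, feed these back into the general equations to obtain the $D_j(x,v,w)$, and then verify by direct substitution that the candidate satisfies all of Lemma~\ref{lem13421423a} together with ${\bf P_1}$ and ${\bf P_2}$. Your added remarks on uniqueness of the formal power series solution and on why the check breaks the apparent circularity of ${\bf P_1}$ make the logic more explicit than the paper's one-line proof, but the strategy is the same.
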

\begin{proof}
By direct calculation (with the aid of mathematical programming), one may verify that each equation in Lemma \ref{lem13421423a} holds as well as ${\bf P_1}$ and ${\bf P_2}$.
\end{proof}

\section{The remaining cases}

In this section, we compute the joint distribution of the first letter and descents statistics in the remaining cases.  In order to do so, we consider a common approach based on consideration of the second letter.  Given $n \geq 2$ and $i,j \in [n]$ with $i \neq j$, let $\mathcal{S}_{n,i,j}(\sigma,\tau)$ denote the subset of $\mathcal{S}_n(\sigma,\tau)$ whose members start $i,j$.  Let $a_n(i,j)=a_n(i,j;q)$ be defined by
$$a_n(i,j)=\sum_{\pi \in \mathcal{S}_{n,i,j}(\sigma,\tau)}q^{\text{desc}(\pi)}, \qquad n \geq 2,$$
for the particular pattern pair $(\sigma,\tau)$ under consideration.  Given $n \geq 2$ and $1 \leq i \leq n$, let $a_n(i)=\sum_{j\neq i}a_n(i,j)$, with $a_1(1)=1$.  Further, let $a_n=\sum_{i=1}^na_n(i)$ for $n \geq 1$.  Note that $a_n$ is itself a $q$-generalization of the Schr\"{o}der numbers.

In this section, we determine the distribution $a_n(i,j)$ in the cases where

$$(\sigma,\tau)=(1324,1342),(1243,1423),(1243,1342),(1243,1324).$$  Note that for all the pattern pairs under consideration, we have
\begin{align}\label{eqneg}
a_n(i,j)=qa_{n-1}(j), \quad 1 \leq j<i\leq n.
\end{align}

So the task remains to write a recurrence for $a_n(i,j)$ when $j>i$.  We observe the following obvious initial conditions for $1 \leq n \leq 3$, which apply to all of the pattern pairs: $a_1=a_1(1)=1$, $a_2(1,2)=1$, $a_2(2,1)=q$, $a_3(1,2)=1$, $a_3(1,3)=a_3(2,1)=a_3(2,3)=a_3(3,1)=q$ and $a_3(3,2)=q^2$.

In all the cases, it is convenient to convert the recurrences to generating functions as follows.  Define $A(x,v,w)=A(x,v,w;q)$ by $$A(x,v,w)=\sum_{n\geq2}\sum_{i=1}^n\sum_{j=1}^na_n(i,j)v^iw^jx^n,$$
with
$$A^+(x,v,w)=\sum_{n\geq2}\sum_{i=1}^{n-1}\sum_{j=i+1}^na_n(i,j)v^iw^jx^n \quad \text{and}\quad A^-(x,v,w)=\sum_{n\geq2}\sum_{i=2}^n\sum_{j=1}^{i-1}a_n(i,j)v^iw^jx^n.$$
Note that $A(x,v,w)=A^+(x,v,w)+A^-(x,v,w)$, by the definitions.  Further auxiliary generating functions that are case dependent will be defined below.

\subsection{The case $(1324,1342)$.}

The $a_n(i,j)$ when $i<j$ are given recursively for the pattern pair $(1324,1342)$ as follows.

\begin{lemma}\label{L1324x1342}
We have
\begin{align}
a_n(i,n)&=qa_{n-1}(i,n-1)+q\sum_{j=1}^{i}a_{n-2}(j), \qquad 1 \leq i \leq n-3,\label{1324x1342qrec1}
\end{align}
and
\begin{align}
a_n(n-2,n)&=q^2a_{n-3}+q\sum_{j=1}^{n-3}a_{n-2}(j), \quad n \geq 4,\label{1324x1342qrec2}
\end{align}
with $a_n(i,i+1)=a_{n-1}(i)$ for $1 \leq i \leq n-1$ and $a_n(i,j)=0$ for  $2 \leq i+1<j<n$.
\end{lemma}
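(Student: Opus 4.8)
The plan is to build members of $\mathcal{S}_{n,i,j}(1324,1342)$ by deletion arguments, splitting into cases according to the position of $j$ relative to $n$ and the position of the letter $1$ (or the smallest available letter). First I would dispose of the trivial cases: the identity $a_n(i,j)=0$ for $2\le i+1<j<n$ follows because if $\pi$ starts $i,j$ with $i+1<j<n$, then the letter $n$ occurs later in $\pi$, and together with some letter of $[i+1,j-1]$ (which must appear somewhere after position $2$) one produces either an occurrence of $1342$ or $1324$ with $i,j$ playing the roles of the first two symbols; one checks both orderings of the two trailing letters give a forbidden pattern. Similarly $a_n(i,i+1)=a_{n-1}(i)$ comes from deleting the first letter $i$ from $\pi=i(i+1)\cdots$: the resulting word on $[n]\setminus\{i\}$, relabeled, is a $(1324,1342)$-avoider starting with $i$ (its old second letter $i+1$ becomes $i$), no descent is lost since $i<i+1$, and the map is a bijection onto $\mathcal{S}_{n-1,i}(1324,1342)$.

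The substantive content is the recurrence \eqref{1324x1342qrec1} for $a_n(i,n)$ with $1\le i\le n-3$. Here $\pi$ starts $i,n$. I would argue that the third letter of $\pi$ is constrained: once we have the prefix $i,n$, any subsequent letter exceeding $i$ but less than $n$, if followed later by a still larger letter or by a letter between $i$ and it, creates a forbidden pattern with $i,n$ as the ``$1$'' and ``$4$'' — so in fact all of $[i+1,n-1]$ must appear in decreasing order after position $2$, OR the structure forces a more rigid decomposition. The cleanest route: delete $n$ from $\pi$. Since $n$ sits in position $2$, deleting it yields a word $\pi'$ on $[n-1]$ starting with $i$; because $n$ in position $2$ cannot be the ``$3$'' or ``$4$'' of a pattern occurrence using letters to its left (there is only $i$ to its left) and removing it cannot create a new occurrence, $\pi'$ avoids $(1324,1342)$ and starts $i,j'$ where $j'$ is the old third letter. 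We lose exactly one descent (the descent $n$-to-next, since $n$ is larger than everything). This gives a contribution $q\cdot(\text{number of valid }\pi')$. The delicate point is determining which $\pi'$ arise: $\pi'$ either starts $i,(n-1)$ again — giving $q\,a_{n-1}(i,n-1)$ — or $\pi'$ starts $i, j'$ with $j'\le i$, and in that sub-case one shows (using avoidance) that $j'$ can be anything in $[i]$ but that the letter originally in position $3$ being small forces the remaining structure to be essentially unrestricted, yielding $q\sum_{j=1}^{i}a_{n-2}(j)$ after deleting both $n$ and then $i$; the reason $j'>i$ with $j'<n-1$ is impossible is exactly the $a_n(i,j)=0$ fact applied one level down. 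I would need to check carefully that deleting $i$ after $n$ lands in $\mathcal{S}_{n-2,j'}$ with no spurious pattern created and no descent miscounted.

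Equation \eqref{1324x1342qrec2}, the boundary case $i=n-2$, is handled by the same deletion of $n$ followed by analysis of the prefix: starting $n-2,n$, the remaining letters are $[n-3]\cup\{n-1\}$, and $n-1$ must come before all of... actually before nothing problematic, but the letter $n-1$ together with $n-2,n$ is constrained, forcing $n-1$ into a near-terminal position; deleting $n,n-1$ (two descents, hence $q^2$) leaves an arbitrary avoider on $[n-3]$, giving $q^2 a_{n-3}$, while the other case (third letter small) gives $q\sum_{j=1}^{n-3}a_{n-2}(j)$ as before. The main obstacle throughout is the bookkeeping: verifying in each deletion that (a) no new forbidden pattern is created, (b) the descent count changes by exactly the claimed amount, and (c) the case split on the third letter is exhaustive and the ``third letter between $i$ and $n$'' branch is genuinely empty — this last relying on a clean statement of the positional constraint imposed by a prefix $i,n$ in a $(1324,1342)$-avoider. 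I expect most of the proof to reduce, after setting up that positional lemma, to routine checking; the initial conditions for $n\le 3$ are immediate from the listed values.
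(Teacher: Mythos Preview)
Your overall strategy matches the paper's: split on the third letter $k$ of $\pi=i,n,k,\ldots$ and delete a short prefix. But your case analysis has a genuine hole. For \eqref{1324x1342qrec1} you assert that, after removing $n$, the second letter $j'$ of $\pi'$ is either $n-1$ or satisfies $j'\le i$; you then claim this yields $q\sum_{j=1}^{i}a_{n-2}(j)$. In fact $j'=i+1$ is also permitted. Your own ``$a_m(i,j')=0$ for $i+1<j'<m$'' argument at level $n-1$ only excludes $i+2\le j'\le n-2$, not $j'=i+1$; and a direct check shows $\pi=i,n,i{+}1,\ldots$ need not contain $1324$ or $1342$ (the argument using $k-1$ and $n-1$ breaks down precisely when $k-1=i$). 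This missing case is exactly what supplies the $j=i$ term of the sum: deleting $i$ and $n$ from $\pi=i,n,i{+}1,\ldots$ and relabelling $[n]\setminus\{i,n\}\to[n-2]$ sends the new first letter $i+1$ to $i$. Without it your cases only produce $\sum_{j=1}^{i-1}a_{n-2}(j)$.

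Your handling of \eqref{1324x1342qrec2} is also off. When $\pi$ starts $n-2,n$ and the third letter is $n-1$, the letter $n-1$ sits at position $3$, not in a ``near-terminal'' position, and you must delete the \emph{three} letters $n-2,n,n-1$ (not just $n$ and $n-1$) to land in $\mathcal{S}_{n-3}(1324,1342)$; the two descents counted by $q^2$ are $n\!\to\! n-1$ and $n-1\!\to\!(\text{next})$. Deleting only $n,n-1$ leaves $n-2$ letters with $n-2$ still in front, which does not give $a_{n-3}$. Once you add the $k=i+1$ branch to the first recurrence and correct the deletion in the boundary case, your argument coincides with the paper's.
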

\begin{proof}
Let $\mathcal{A}_{n,i,j}=\mathcal{S}_{n,i,j}(1324,1342)$ and $\mathcal{A}_{n,i}=\mathcal{S}_{n,i}(1324,1342)$.  Note that the formula for $a_n(i,i+1)$ when $i<n$ and for $a_n(i,j)$ when $i+1<j<n$ may be verified using the definitions.  To show \eqref{1324x1342qrec1}, suppose $\pi \in \mathcal{A}_{n,i,n}$, where $1 \leq i \leq n-3$, and let $k$ denote the third letter of $\pi$.  Then we must have $k=n-1$, $k=i+1$ or $k<i$, for $i<k<n-1$ would ensure an occurrence of $1324$ or $1342$.  If $k=n-1$, then we may remove $n$ resulting in a member of $\mathcal{A}_{n-1,i,n-1}$, which accounts for the first term on the right side of \eqref{1324x1342qrec1}.  Note that the factor of $q$ accounts for the descent occurring between $n$ and $n-1$.  If $k<i$ or $k=i+1$, then both $i$ and $n$ may be deleted resulting in a member of $\cup_{j=1}^i\mathcal{A}_{n-2,j}$. This is accounted for by $q\sum_{j=1}^i a_{n-2}(j)$, the factor of $q$ arising due to the descent occurring between the second and third letters within the enumerated permutations.  Combining this case with the previous yields \eqref{1324x1342qrec1}.  To show \eqref{1324x1342qrec2}, consider separately the cases when $k=n-1$ or $k \leq n-3$.  If $k=n-1$, then the first three letters $n-2,n,n-1$ may be deleted, resulting in $\sigma \in \mathcal{S}_{n-3}(1324,1342)$ with no restrictions.  This yields the $q^2a_{n-3}$ term on the right side, as there is a descent between $n$ and $n-1$ and also between $n-1$ and the first letter of $\sigma$ (note $n \geq 4$ implies $\sigma$ is nonempty).  If $k \leq n-3$, then the first two letters may be deleted, which accounts for $q\sum_{k=1}^{n-3}a_{n-3}(k)$ and completes the proof of \eqref{1324x1342qrec2}.
\end{proof}

By Lemma \ref{L1324x1342}, one has for example when $n=4$ the following array:
\begin{align*}
a_4(1,2)=1+q && a_4(1,3)=0 && a_4(1,4)=q+q^2\\
a_4(2,1)=q+q^2 && a_4(2,3)=2q && a_4(2,4)=q+q^2\\
a_4(3,1)=q+q^2 && a_4(3,2)=2q^2 && a_4(3,4)=q+q^2\\
a_4(4,1)=q+q^2 && a_4(4,2)=2q^2 &&a_4(4,3)=q^2+q^3,
\end{align*}
which may be verified using the definitions.

We now express the recurrences in Lemma \ref{L1324x1342} in terms of $A(x,v,w)$ and $A^{\pm}(x,v,w)$ as given above. Further, in this case, it is convenient to define the additional generating functions $$C(x,v)=\sum_{n\geq2}\sum_{i=1}^{n-1}a_n(i,i+1)v^ix^n, \quad D(x,v)=\sum_{n\geq2}\sum_{i=1}^{n-1}a_n(i,n)v^ix^n.$$ Then Lemma \ref{L1324x1342}, together with \eqref{eqneg} leads to the following system of functional equations:
\begin{align*}
A^-(x,v,w)&=v^2wqx^2+\frac{vqx}{1-v}(A(x,vw,1)-vA(vx,w,1)),\\
C(x,v)&=vx^2+xA(x,v,1),\\
D(x,v)&=qxD(x,v)-vq^2x^3A(vx,1,1)-v^2q^2x^4+\frac{qx^2}{1-v}(A(x,v,1)-A(vx,1,1))\\
&+qx^2A(vx,1,1)+vqx^2(A(vx,1,1)+vx)+vx^2,\\
A^+(x,v,w)&=wC(x,vw)+D(wx,v)-vw^2qx^2A(vwx,1,1)-vw^2x^2-v^2w^3qx^3.
\end{align*}

Eliminating $C(x,v)$ and $D(x,v)$ from the preceding system, and using the fact $A(x,v,w)=A^+(x,v,w)+A^-(x,v,w)$,  implies that $A(x,v,w)$ satisfies
\begin{align}\label{eq1324x1342qvw}
A(x,v,w)&=\left(wx+\frac{vqx}{1-v}\right)A(x,vw,1)-\frac{v^2qx}{1-v}A(vx,w,1)
+\frac{vw^2qx^2}{(1-v)(wqx-1)}A(vwx,1,1)\\
&-\frac{w^2qx^2}{(1-v)(wqx-1)}A(wx,v,1)
+\frac{(vwq^2x-vq-w)vwx^2}{wqx-1}.\nonumber
\end{align}
Taking $w=1$ in \eqref{eq1324x1342qvw}, and replacing $x$ by $x/v$, we obtain
\begin{align}\label{eq1324x1342qvw1}
&\left(1-\frac{x}{v}-\frac{qx}{1-v}+\frac{qx^2}{v(1-v)(qx-v)}\right)A(x/v,v,1)\\
&=\frac{(q^2x-vq-1)x^2}{qx-v}-\left(\frac{vqx}{1-v}-\frac{qx^2}{(1-v)(qx-v)}\right)A(x,1,1).\nonumber
\end{align}
Taking $v=v_0=\frac{1+x+t(x)}{2}$ in \eqref{eq1324x1342qvw1}, where $t(x)$ is as above, then cancels out the left-hand side of \eqref{eq1324x1342qvw1} and leads to
$$A(x,1,1)=\frac{-1+(1+2q)x+2q(1-q)x^2+t(x)}{2q(qx-x-1)}.$$

Thus, by \eqref{eq1324x1342qvw1}, we obtain an explicit formula for $A(x,v,1)$.  Substituting the expressions for $A(x,v,1)$ and $A(x,1,1)$ into \eqref{eq1324x1342qvw} then yields the following result.

\begin{theorem}\label{Th11324x1342}
The generating function $A(x,v,w)$ which enumerates members of $\mathcal{S}_n(1324,1342)$ for $n \geq 2$ according to the joint distribution of the first and second letter statistics and the number of descents is given by \eqref{eq1324x1342qvw}, where
\begin{align*}
A(x,v,1)&=\frac{xv(vqx-v-x)t(xv)}{2(vq^2x^2-vqx^2-qx+vx-v-x+1)(vqx-vx-1)}\\
&+\frac{(x+(qx^2+qx+3x^2-2x-1)v)vx}{2(vqx-vx-1)(vqx-1)(vq^2x^2-vqx^2-qx+vx-v-x+1)}\\
&-\frac{(2q^2x^2+3q^2x-qx^2-qx-3q+2x-1+(1-2q)(qx-1)vqx)v^3x^2}{2(vqx-vx-1)(vqx-1)(vq^2x^2-vqx^2-qx+vx-v-x+1)}+\frac{v^2qx^2}{1-vqx}.
\end{align*}
\end{theorem}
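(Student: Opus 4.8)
The plan is to translate the recurrences of Lemma~\ref{L1324x1342} and the relation \eqref{eqneg} into the functional-equation system displayed above, reduce that system to the single equation \eqref{eq1324x1342qvw}, and then extract $A(x,1,1)$ and $A(x,v,1)$ by the kernel method. For the first step, relation \eqref{eqneg} translates directly into the stated expression for $A^{-}(x,v,w)$, the identity $a_n(i,i+1)=a_{n-1}(i)$ gives $C(x,v)=vx^2+xA(x,v,1)$, and summing \eqref{1324x1342qrec1} and \eqref{1324x1342qrec2} against $v^ix^n$ over the appropriate ranges (while accounting for the small-$n$ boundary values) yields the displayed equation for $D(x,v)$; the equation for $A^{+}(x,v,w)$ records that, since $a_n(i,j)=0$ whenever $i+1<j<n$, a member counted in $A^{+}$ has second letter either $i+1$ (a $C$-contribution) or $n$ (a $D$-contribution), up to boundary corrections. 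Eliminating $C$ and $D$ and using $A=A^{+}+A^{-}$ produces \eqref{eq1324x1342qvw}. This translation is the step most prone to error, so I would cross-check the resulting equation against the explicit $n=4$ array listed above.

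Next, putting $w=1$ in \eqref{eq1324x1342qvw} and replacing $x$ by $x/v$ leaves \eqref{eq1324x1342qvw1}, involving only $A(x/v,v,1)$ and $A(x,1,1)$. After clearing denominators, the coefficient of $A(x/v,v,1)$ is $v$ times the quadratic $v^{2}-(1+x)v+x\bigl(1+q+qx(1-q)\bigr)$, and a short computation gives
\[
(1+x)^{2}-4x\bigl(1+q+qx(1-q)\bigr)=(1-2q)^{2}x^{2}-2x(1+2q)+1=t(x)^{2}.
\]
Hence the kernel vanishes at $v=v_{0}(x)=\tfrac12\bigl(1+x+t(x)\bigr)$, and since $v_{0}(0)=1$ this is the admissible root for substitution into the cleared form of \eqref{eq1324x1342qvw1} (the apparent pole of $1/(1-v)$ at $x=0$ is harmless once denominators are cleared, as the surviving factor is $v_{0}(1-v_{0})$). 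Setting $v=v_{0}$ annihilates the left-hand side and leaves a linear equation for $A(x,1,1)$; solving it and reducing the degree via $t(x)^{2}=(1-2q)^{2}x^{2}-2x(1+2q)+1$ gives $A(x,1,1)=\dfrac{-1+(1+2q)x+2q(1-q)x^{2}+t(x)}{2q(qx-x-1)}$.

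Finally, substituting this expression for $A(x,1,1)$ back into \eqref{eq1324x1342qvw1} and solving for $A(x,v,1)$ yields, after a routine but lengthy simplification (best done with computer algebra), the closed form claimed in the theorem; equation \eqref{eq1324x1342qvw} then expresses $A(x,v,w)$ in terms of $A(x,vw,1)$, $A(vx,w,1)$, $A(vwx,1,1)$ and $A(wx,v,1)$, all now known, which completes the proof. I expect the main obstacle to be not any isolated conceptual point but the combination of getting the generating-function system exactly right from the recurrences and then carrying out the simplification that recognizes the stated closed form for $A(x,v,1)$; the kernel step itself is short once the discriminant identity above is observed.
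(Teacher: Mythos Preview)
Your proposal is correct and follows essentially the same route as the paper: translate Lemma~\ref{L1324x1342} and \eqref{eqneg} into the functional-equation system, eliminate $C$ and $D$ to obtain \eqref{eq1324x1342qvw}, specialize to $w=1$ and $x\mapsto x/v$ to get \eqref{eq1324x1342qvw1}, kill the kernel at $v_0=\tfrac12(1+x+t(x))$ to extract $A(x,1,1)$, and back-substitute for $A(x,v,1)$. Your explicit identification of the quadratic $v^2-(1+x)v+x(1+q+qx(1-q))$ and the discriminant computation $(1+x)^2-4x(1+q+qx(1-q))=t(x)^2$ make the kernel step slightly more transparent than in the paper, but the argument is otherwise identical.
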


Taking $w=1$ in Theorem \ref{Th11324x1342}, and comparing results with $U(x,v,1)$ above, yields the following equality, which implies the desired equivalence of distributions.

\begin{theorem}
We have
$$A(x,v,1)+vx=U(x,v,1)+\frac{vx}{1-vqx}.$$
\end{theorem}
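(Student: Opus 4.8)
The plan is to read off the two closed forms that are already at our disposal and to check that they agree after a trivial rearrangement of fractions.

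First I would record what the preceding results give directly. By Theorem~\ref{Th11324x1342}, the formula for $A(x,v,1)$ consists of three summands together with the extra term $\dfrac{v^2qx^2}{1-vqx}$; write $\Phi(x,v)$ for the sum of those three summands, so that $A(x,v,1)=\Phi(x,v)+\dfrac{v^2qx^2}{1-vqx}$. On the other hand, the corollary to Theorem~\ref{UTheor} states that $U(x,v,1)$ equals precisely those same three summands, i.e.\ $U(x,v,1)=\Phi(x,v)$ term for term. Hence
\[
A(x,v,1)-U(x,v,1)=\frac{v^2qx^2}{1-vqx},
\]
and in particular no manipulation of the bulky expression $\Phi(x,v)$ (with its radical $t(xv)$) is called for.

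It then remains only to observe that
\[
\frac{v^2qx^2}{1-vqx}+vx=\frac{v^2qx^2+vx(1-vqx)}{1-vqx}=\frac{vx}{1-vqx}.
\]
Adding $vx$ to both sides of the displayed difference above and invoking this identity yields $A(x,v,1)+vx=U(x,v,1)+\dfrac{vx}{1-vqx}$, which is the assertion; extracting the coefficient of $v^ix^n$ from both sides then recovers the case $(1324,1342)$ of \eqref{introtheore1}, namely the equality of the $\mathrm{desc}$--$\mathrm{first}$ distribution on $\mathcal{S}_n(1324,1342)$ with the $\mathrm{dist}$--$\mathrm{last}$ distribution on $\mathcal{U}_n$.

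There is no genuine obstacle at this stage: the content of the theorem is entirely packaged in the two explicit generating-function formulas, and producing those was the hard part, done earlier by applying the kernel method to the functional equation \eqref{eq1324x1342qvw} and, in Section~2, to the system satisfied by $U$. If one wishes to be cautious, the final identity can also be double-checked numerically by comparing the power-series expansions of both sides in $x$ up through, say, $x^8$, or symbolically by clearing denominators; but the term-by-term coincidence of $A(x,v,1)$ with $U(x,v,1)+\dfrac{v^2qx^2}{1-vqx}$ already settles the matter.
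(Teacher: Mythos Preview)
Your proposal is correct and follows essentially the same approach as the paper: the paper simply states that taking $w=1$ in Theorem~\ref{Th11324x1342} and comparing with the corollary formula for $U(x,v,1)$ above yields the equality, which is exactly your observation that the three bulky summands coincide and only the elementary identity $\dfrac{v^2qx^2}{1-vqx}+vx=\dfrac{vx}{1-vqx}$ remains. Your write-up is in fact more detailed than the paper's one-line justification.
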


\subsection{The case $(1243,1423)$.}

In this and the next two subsections, it is demonstrated that $A(x,v,1)$ is given by the same formula as in Theorem \ref{Th11324x1342} above for each pattern pair under consideration.  Note however that the distribution of the second letter statistic will be seen to be distinct in each case.  For $(1243,1423)$ (and the subsequent two cases), we extend the recurrence for $a_n(i,j)$ when $i<j$ found in \cite{MaSh2} by introducing a further variable $q$ which tracks the number of descents.

This yields the following recurrence for $a_n(i,j)$ when $i<j$.

\begin{lemma}\label{12431423L1q}
We have
\begin{equation}\label{12431423e1q}
a_n(i,i+1)=a_{n-1}(i,i+1)+\sum_{a=1}^{i-1}\sum_{c=0}^{i-a-1}\sum_{b=a+1}^{i-c}q^{c+1}\binom{i-a-1}{c}a_{n-c-2}(a,b), \qquad 1 \leq i \leq n-2,
\end{equation}
with $a_n(n-1,n)=qa_{n-2}$,
\begin{align}
a_n(i,i+2)&=qa_{n-1}(i,i+1)+a_{n-1}(i,i+2)\notag\\
&\quad+\sum_{a=1}^{i-1}\sum_{c=0}^{i-a-1}\sum_{b=a+1}^{i-c+1}q^{c+1}\binom{i-a-1}{c}a_{n-c-2}(a,b), \qquad 1 \leq i \leq n-3,\label{12431423e2q}
\end{align}
with $a_n(n-2,n)=qa_{n-2}$, and
\begin{align}
a_n(i,j)&=qa_{n-1}(i,j-1)+\sum_{a=1}^{i-1}\sum_{c=0}^{i-a-1}q^{c+1}\binom{i-a-1}{c}a_{n-c-2}(a,j-c-2)\notag\\
&\quad+(1-\delta_{j,n})\cdot\left(a_{n-1}(i,j)+\sum_{a=1}^{i-1}\sum_{c=0}^{i-a-1}q^{c+1}\binom{i-a-1}{c}a_{n-c-2}(a,j-c-1)\right) \label{12431423e3q}
\end{align}
for $4 \leq i+3 \leq j \leq n$.
\end{lemma}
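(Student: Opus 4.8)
The plan is to build on the deletion arguments of \cite{MaSh2}, which yield the specialization of \eqref{12431423e1q}--\eqref{12431423e3q} at $q=1$, and to refine them by tracking how the number of descents changes under each deletion. Fix $\pi\in\mathcal{S}_{n,i,j}(1243,1423)$ with $i<j$. Since $\pi_1=i$ and $\pi_2=j$, the avoidance of $1423$ (with $i,j$ in the roles of ``$1$'' and ``$4$'') forces the letters of $\{i+1,\ldots,j-1\}$ to occur in decreasing order among positions $\ge 3$, while the avoidance of $1243$ (with $i,j$ in the roles of ``$1$'' and ``$2$'') forces the letters exceeding $j$ to occur in increasing order among positions $\ge 3$; I would begin by re-establishing these facts and, following \cite{MaSh2}, the finer description of how the letters below $i$ are organized into a recursively built initial block of $\pi$. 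The split into three equations then mirrors whether the interval $(i,j)$ is empty ($j=i+1$), a single letter ($j=i+2$), or larger ($j\ge i+3$), and within each one treats separately the subcase in which $\pi_2=j=n$, i.e.\ the maximal letter occupies the second position; this subcase accounts for the factor $1-\delta_{j,n}$ in \eqref{12431423e3q} and for the boundary evaluations $a_n(n-1,n)=qa_{n-2}$ and $a_n(n-2,n)=qa_{n-2}$, each obtained by stripping off the prefix $\pi_1\,n$, which contributes exactly one descent.

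I would then argue term by term. The term $q\,a_{n-1}(i,j-1)$ in \eqref{12431423e3q} (and likewise $q\,a_{n-1}(i,i+1)$ in \eqref{12431423e2q}) should arise from a canonical single-letter deletion that removes precisely one descent, while the terms $a_{n-1}(i,j)$, $a_{n-1}(i,i+2)$, $a_{n-1}(i,i+1)$ without a factor of $q$ should arise from a single-letter deletion that is descent-neutral, the two types being distinguished by the relative order of the neighbours of the deleted letter. The triple and double sums encode the ``long'' reductions coming from the recursive structure of the initial block: the outer index $a\in[1,i-1]$ names the value opening the current run, the factor $\binom{i-a-1}{c}$ counts the $c$-element subsets of $\{a+1,\ldots,i-1\}$ whose members must be excised together with two further specified letters of the prefix, the factor $q^{c+1}$ records that each of these $c+1$ excised letters is the bottom of a descent of $\pi$ that disappears on deletion, and what remains, after relabelling the larger letters, is a $(1243,1423)$-avoider of length $n-c-2$ beginning with $a$ and with second letter $b$ ranging over the stated interval. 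One must then check the converse: that each such shorter permutation, together with a choice of $c$-subset, is lifted uniquely by reinserting the excised letters into their forced positions, and that no reinsertion creates an occurrence of $1243$ or $1423$; this is an active-site verification in the spirit of the generating-tree analysis carried out for the pair $(1324,1423)$. The small-$n$ initial conditions recorded before the lemma follow directly from the definitions.

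The step I expect to be the principal obstacle is the descent bookkeeping inside the sums: one must prove that each of the $c+1$ letters removed in a long reduction genuinely sits at the bottom of a descent of $\pi$, and that merging the surrounding entries neither creates a new descent nor destroys another, so that the descents of $\pi$ other than these $c+1$ are in weight-preserving correspondence with those of the reduced permutation. This requires pinning down the exact relative positions of $a,a+1,\ldots,i-1$ within the initial block and of $j$ with respect to its left and right neighbours, which is where the structural description from \cite{MaSh2} must be used most carefully. A secondary, more routine, point is to confirm that the upper limits of the inner sums over $b$, namely $i-c$ in \eqref{12431423e1q} and $i-c+1$ in \eqref{12431423e2q}, are exactly the second letters $b$ admitting a valid lift; this is most cleanly settled by the same active-site count.
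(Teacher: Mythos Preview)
Your proposal is correct and follows essentially the same approach as the paper, which does not supply an independent proof but simply remarks that the recurrences are obtained by extending the $q=1$ recurrences of \cite{MaSh2} via the introduction of a variable $q$ tracking descents. Your sketch is in fact considerably more detailed than what the paper provides, and the points you flag as the main obstacles (the descent bookkeeping in the long reductions and the verification of the upper limits on $b$) are precisely the places where the refinement over \cite{MaSh2} requires care.
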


Define in this case $$C(x,v)=\sum_{n\geq2}\sum_{i=1}^{n-1}a_n(i,i+1)v^ix^n, \quad D(x,v)=\sum_{n\geq3}\sum_{i=1}^{n-2}a_n(i,i+2)v^ix^n.$$ Then Lemma \ref{12431423L1q}, together with \eqref{eqneg}, yields the following system:
\begin{align}
A^-(x,v,w)&=qv^2wx^2+\frac{qvx}{1-v}A(x,vw,1)-\frac{qv^2x}{1-v}A(vx,w,1),\notag\\
C(x,vw)&=qvwx^2A(vwx,1,1)+vwx^2+qv^2w^2x^3+xC(x,vw)\label{C(x,v)eqn0}\\
&+\frac{qx^2}{qvwx+vw-1}\big(vwA^+(\frac{vwx}{1-qvwx},1-qvwx,1)\notag\\
&-(1-qvwx)A^+(x,1-qvwx,\frac{vw}{1-qvwx})\big),\notag\\
D(x,vw)&=qx(1-x)C(x,vw)+xD(x,vw)\label{D(x,v)equn}\\ &+\frac{qx^2}{vw(qvwx+vw-1)}\big(v^2w^2A^+(\frac{vwx}{1-qvwx},1-qvwx,1)\notag\\
&-(1-qvwx)^2A^+(x,1-qvwx,\frac{vw}{1-qvwx})\big),\notag\\
A^+(x,v,w)&=w(1-qwx)(1-x)C(x,vw) +w^2(1-x)D(x,vw) +(1+qw)xA^+(x,v,w)\notag\\ &+\frac{qw^2x^2}{qvwx+v-1}((1-qvwx)A^+(x,1-qvwx,\frac{vw}{1-qvwx})-vA^+(x,v,w))\notag\\ &+\frac{qwx^2}{v(qvwx+v-1)}((1-qvwx)^2A^+(x,1-qvwx,\frac{vw}{1-qvwx})-v^2A^+(x,v,w)).\notag
\end{align}
Note that by \eqref{C(x,v)eqn0} and \eqref{D(x,v)equn}, one can obtain a formula for $C(x,vw)$ and $D(x,vw)$ in terms of $A$ and $A^+$.

By programming, one may verify that the solution of the preceding system of functional equations is given by the following result.

\begin{theorem}\label{Th1423x1243}
Let $r(x)=\sqrt{1-x}\sqrt{4q(q-1)v^2w^2x^2-4qvwx-x+1}$.
Then the generating function $A(x,v,w)$ is given by $A^+(x,v,w)+A^-(x,v,w)$, where
{\footnotesize\begin{align*}
&A^+(x,v,w)\\
&=\frac{vwx^2(1-w^2)(1-x)r(x)t(vxw)}
{4((q-1)x+1)((q-1)vwx-1)(q(q-1)vw^2x^2-qwx+(v-1)(x-1))(q(q-1)vwx^2-qx+(vw-1)(x-1))}\\
&+\frac{wx^2(2q(q-1)v^2w^2x^2-2qvwx+(2v^2-1)(x-1))(2q(q-1)v^2w^2x^2-2qvwx+(2v^2w^2-1)(x-1))t(vwx)}
{8v((q-1)x+1)((q-1)vwx-1)(q(q-1)vw^2x^2-qwx+(v-1)(x-1))(q(q-1)vwx^2-qx+(vw-1)(x-1))}\\
&+\frac{vwx^2(w^2-1)((2q^2-2q+1)vwx^2-vwx-2qx-x+1)r(x)}{
4((q-1)x+1)((q-1)vwx-1)(q(q-1)vw^2x^2-qwx+(v-1)(x-1))(q(q-1)vwx^2-qx+(vw-1)(x-1))}\\
&+\frac{(2q(q-1)v^2w^2x^2-2qvwx+(2v^2w^2-1)(x-1))\alpha x}
{8v((q-1)x+1)((q-1)vwx-1)(qvwx-vw-1)(q(q-1)vw^2x^2-qwx+(v-1)(x-1))(q(q-1)vwx^2-qx+(vw-1)(x-1))},\\
&A^-(x,v,w)\\
&=\frac{(x(qvx+vx-v-1)-x(q(q-1)v^2x^2+v^2x-v^2+vx-1)w+v(qx-1)(qvx-1)w^2)qv^2wx^2t(vwx)}{2((q-1)vwx-1)(q(q-1)v^2wx^2-qvx+(vx-1)(w-1))(q(q-1)vwx^2-qx+(vw-1)(x-1))}\\
&+\frac{(2vw-3vw^2+2w-2+((v+1)(2q+1)-(2qv^2+v^2+2q+2v+1)w+v(v+1)(q+2)w^2+v^2(2q-1)w^3)x)qv^2wx^2}{2((q-1)vwx-1)(q(q-1)v^2wx^2-qvx+(vx-1)(w-1))(q(q-1)vwx^2-qx+(vw-1)(x-1))}\\
&+\frac{((q+1)(2q+1)+2q(v+1)(q-2)w+((2q-2q^2-1)v^2+q(q+2)v-2q^2+2q-1)w^2+qv(v+1)(2q-1)w^3)qv^3wx^4}{2((q-1)vwx-1)(q(q-1)v^2wx^2-qvx+(vx-1)(w-1))(q(q-1)vwx^2-qx+(vw-1)(x-1))}\\
&+\frac{(4q^3-q^2-2q+1-(2q^2-2q+1)(v+1)w+q^2v(2q-1)w^2-(q-1)(2q^2-2q+1)qvwx)qv^4w^2x^5}{2((q-1)vwx-1)(q(q-1)v^2wx^2-qvx+(vx-1)(w-1))(q(q-1)vwx^2-qx+(vw-1)(x-1))},
\end{align*}}
and
$\alpha=4v^2w^2-6v^3w^2-2v^2w+vw^2+2v+w-2
+(2-2v-(2qv^2-6qv-4v^2+2v+1)w+v(6qv^2+8v^2-5q-4v-2)w^2+v^2(4qv^2-4qv-2v^2-6q+4v-1)w^3)x
+(2(v-1)(q-1)+(8q(1-q)v-6qv^2-2v^2+3q+1)w-v(2(2q+1)(q-1)v^2-10q^2-(q-1)(4v-1))w^2+2qv^2(5q-4)w^3)vwx^2
+(4q^2v+2(2q-1)v^2+(4v+1)(1-2q)-2v(5q^2-3q-1)w-2v^2(2q-1)(q-1)w^2)qv^2w^3x^3
+2q^2v^4w^5(2q-1)(q-1)x^4$.\medskip

Moreover, $C(x,v)$ and $D(x,v)$ can be found by equations \eqref{C(x,v)eqn0} and \eqref{D(x,v)equn}, respectively.
\end{theorem}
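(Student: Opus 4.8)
The plan is to solve, by the kernel method \cite{HouM}, the system of functional equations stated immediately before the theorem --- the equation for $A^-(x,v,w)$, the relations \eqref{C(x,v)eqn0} and \eqref{D(x,v)equn} for $C(x,vw)$ and $D(x,vw)$, and the equation for $A^+(x,v,w)$ --- which were obtained by rewriting \eqref{12431423e1q}--\eqref{12431423e3q} and \eqref{eqneg} in generating function form; once a closed form is produced, its (lengthy but routine) verification against each of these equations is carried out with the aid of a symbolic algebra package. The first reduction is to use \eqref{C(x,v)eqn0} and \eqref{D(x,v)equn} to express $C(x,vw)$ and $D(x,vw)$ as explicit rational combinations of $A(vwx,1,1)$, of $A^+(x,v,w)$ and of the two ``shifted'' evaluations $A^+\!\bigl(x,\,1-qvwx,\,\tfrac{vw}{1-qvwx}\bigr)$ and $A^+\!\bigl(\tfrac{vwx}{1-qvwx},\,1-qvwx,\,1\bigr)$. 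Substituting these back into the $A^+$-equation collapses the whole system to a single functional equation whose only unknowns are $A^+(x,v,w)$, those two shifted evaluations, and the specialization $A(x,v,1)$ (the last entering through the $A^-$-equation after writing $A=A^+ +A^-$).

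The structural fact that makes this manageable is that the substitution $(v,w)\mapsto(1-qvwx,\ \tfrac{vw}{1-qvwx})$ fixes the product $vw$, its image depends on $(v,w)$ only through $vw$, and it is idempotent (a second application changes nothing); the argument tuple $\bigl(\tfrac{vwx}{1-qvwx},\,1-qvwx,\,1\bigr)$ is likewise a fixed point of the associated transformation on all three variables. Hence the shifted evaluations of $A^+$ are really functions of $x$ and $vw$ alone and play the role of a small, bounded set of boundary unknowns. I would then apply the kernel method: the reduced equation has the shape $\mathcal{K}(x,v,w)\,A^+(x,v,w)=(\text{terms in the boundary unknowns and }A(x,v,1))$, where $\mathcal{K}$, viewed as a quadratic in $v$ with $x,w$ as parameters, has a root that introduces the radical $t(vwx)=\sqrt{(1-2q)^2v^2w^2x^2-2vwx(1+2q)+1}$. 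Setting $v$ equal to that root annihilates the left-hand side and yields one linear relation among the boundary unknowns; a second admissible specialization --- the one accounting for the extra factor $\sqrt{1-x}$ and hence for $r(x)=\sqrt{1-x}\sqrt{4q(q-1)v^2w^2x^2-4qvwx-x+1}$ in the final answer --- yields a second relation. Solving this small linear system pins down the boundary functions, and back-substitution into the reduced equation delivers $A^+(x,v,w)$ in closed form.

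Once $A^+$ is in hand the remaining pieces follow mechanically. Setting $w=1$ and using $A(x,v,1)=A^+(x,v,1)+A^-(x,v,1)$ should reproduce the formula for $A(x,v,1)$ already recorded in Theorem~\ref{Th11324x1342}; this may either be rederived here or imported, and, as the abstract indicates, one may exploit Conjecture~1 itself at this point by feeding in the predicted value of $A(x,v,1)$ to shortcut the solution. The $A^-$-equation then gives $A^-(x,v,w)$ directly in terms of $A(x,vw,1)$ and $A(vx,w,1)$, one has $A(x,v,w)=A^+(x,v,w)+A^-(x,v,w)$, and $C$ and $D$ are recovered from \eqref{C(x,v)eqn0} and \eqref{D(x,v)equn}. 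I expect the main obstacle to be computational bookkeeping rather than anything conceptual: the reduced functional equation and its kernel are unwieldy (as the size of the stated expressions for $A^+$ and $A^-$ makes clear), so the delicate points are correctly identifying the two specializations that generate $t(vwx)$ and $r(x)$, controlling the $\tfrac{0}{0}$-type cancellations that occur when $v$ is set to a root of $\mathcal{K}$, and then simplifying the resulting rational expressions. For that reason the honest route is to guess the closed forms from these considerations and then verify, with machine assistance, that they satisfy every equation of the system --- which is precisely the verification referred to in the statement.
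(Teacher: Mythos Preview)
Your proposal is correct and ultimately lands on exactly the approach the paper takes: the paper's entire proof is the single sentence ``By programming, one may verify that the solution of the preceding system of functional equations is given by the following result,'' i.e.\ a direct machine check that the displayed formulas satisfy the $A^-$-equation, \eqref{C(x,v)eqn0}, \eqref{D(x,v)equn}, and the $A^+$-equation. Your additional discussion of how one might \emph{discover} the closed forms --- reducing to a single kernel equation, exploiting the idempotence of $(v,w)\mapsto(1-qvwx,\,vw/(1-qvwx))$ so that the shifted evaluations depend only on $vw$, and cancelling the kernel at two specializations to produce $t(vwx)$ and $r(x)$ --- is genuinely more informative than what the paper records, but it is supplementary heuristics rather than a different proof: both you and the paper certify the theorem by symbolic verification of the stated expressions against the functional-equation system.
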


In particular, taking $w=1$ in the preceding theorem gives
\begin{align*}
A(x,v,1)&=\frac{xv(vqx-v-x)t(xv)}{2(vq^2x^2-vqx^2-qx+vx-v-x+1)(vqx-vx-1)}\\
&+\frac{(x+(qx^2+qx+3x^2-2x-1)v)vx}{2(vqx-vx-1)(vqx-1)(vq^2x^2-vqx^2-qx+vx-v-x+1)}\\
&-\frac{(2q^2x^2+3q^2x-qx^2-qx-3q+2x-1+(1-2q)(qx-1)vqx)v^3x^2}{2(vqx-vx-1)(vqx-1)(vq^2x^2-vqx^2-qx+vx-v-x+1)}+\frac{v^2qx^2}{1-vqx},
\end{align*}
which agrees with the comparable generating function formula found above in the case $(1324,1342)$.

\subsection{The case $(1243,1342)$.}

We have in this case the following recurrence for $a_n(i,j)$ when $i<j$.

\begin{lemma}\label{12431342L1q}
If $1 \leq i <j \leq n-1$, then
\begin{align}\label{12431342L1e1q}
a_n(i,j)&=q\sum_{k=i+1}^{j-1}a_{n-1}(i,k)+\sum_{a=1}^{i-1}\sum_{c=0}^{i-a-1}\sum_{b=a+1}^{j-c-2}q^{c+1}\binom{i-a-1}{c}a_{n-c-2}(a,b)\notag\\
&\quad+\delta_{i+1,j}\cdot\left(a_{n-1}(i,i+1)+\sum_{a=1}^{i-1}\sum_{c=0}^{i-a-1}q^{c+1}\binom{i-a-1}{c}a_{n-c-2}(a,i-c)\right),
\end{align}
with $a_n(i,n)=\sum_{j=1}^{i-1}a_{n-1}(i,j)+q\sum_{j=i+1}^{n-1}a_{n-1}(i,j)$ for $1 \leq i \leq n-1$.
\end{lemma}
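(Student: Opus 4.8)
The plan is to mirror the combinatorial derivation of the unrefined ($q=1$) recurrence carried out in \cite{MaSh2}, while recording at each step how the number of descents changes. Write $\mathcal{A}_{n,i,j}=\mathcal{S}_{n,i,j}(1243,1342)$ and $\mathcal{A}_{n,i}=\mathcal{S}_{n,i}(1243,1342)$, and fix $\pi=i\,j\,\pi_3\cdots\pi_n\in\mathcal{A}_{n,i,j}$ with $i<j$. The input is the structural description from \cite{MaSh2}: avoidance of $1243$ forces the letters exceeding $j$ that occur after position two to form an increasing subsequence, and avoidance of $1342$ forces every letter of $[i+1,j-1]$ occurring after position two to precede all letters exceeding $j$; in particular, if $j>i+1$ then $\pi_3\le j-1$, and one is led to the case split $\pi_3\in[i+1,j-1]$ versus $\pi_3\le i-1$, with a further analysis of the left-to-right minima of $\pi$ in the latter situation.

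\emph{The range $1\le i<j\le n-1$.} If $\pi_3=k$ with $i+1\le k\le j-1$, delete the second letter $j$ and relabel the larger letters downward; the result lies in $\mathcal{A}_{n-1,i,k}$, and the two adjacencies $i,j$ (an ascent) and $j,k$ (a descent) are replaced by the single adjacency $i,k$ (an ascent), so exactly one descent is destroyed while all other adjacencies are untouched. By the structural description the inverse operation, reinserting $j$ in second position, never creates $1243$ or $1342$, so summing over $k$ contributes $q\sum_{k=i+1}^{j-1}a_{n-1}(i,k)$. If instead $\pi_3\le i-1$, the structure singles out a left-to-right minimum $a\le i-1$ and shows that deleting from $\pi$ a block consisting of the letters $i$ and $j$ together with $c$ of the $i-a-1$ letters of $[a+1,i-1]$ (for some $0\le c\le i-a-1$, chosen in $\binom{i-a-1}{c}$ ways) yields a member of $\mathcal{A}_{n-c-2,a,b}$, the shape of the block being such that its removal destroys exactly $c+1$ descents and the new second letter $b$ runs over exactly $[a+1,j-c-2]$; summing over $a$, $c$ and $b$ contributes the triple sum. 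When $j=i+1$ the interval $[i+1,j-1]$ is empty, so the first case is vacuous and parts of the second degenerate: a direct check shows $\pi_3$ is forced to equal $i+2$ whenever $\pi_3>j$ (and then deleting $j$ costs no descent, contributing $a_{n-1}(i,i+1)$), and the bound on $b$ relaxes to $b\le j-c-1=i-c$, producing the extra term $\sum_{a=1}^{i-1}\sum_{c=0}^{i-a-1}q^{c+1}\binom{i-a-1}{c}a_{n-c-2}(a,i-c)$; together these are the $\delta_{i+1,j}$ correction.

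\emph{The range $j=n$.} Now $n=\pi_2$ sits directly after $i$, and deleting $n$ gives a member of $\mathcal{A}_{n-1,i}$. Conversely, inserting $n$ in the second position of any member of $\mathcal{A}_{n-1,i}$ cannot complete a $1243$ or a $1342$, since in each of those patterns the largest letter occupies the third position and so requires two earlier letters of the occurrence, whereas position two is preceded only by $i$; hence this is a bijection onto all of $\mathcal{A}_{n-1,i}$. The descent $n>\pi_3$ is destroyed and a new adjacency appears in position one, which is a descent exactly when the image's second letter is smaller than $i$. Thus images with second letter $j<i$ contribute $a_{n-1}(i,j)$ and those with second letter $j>i$ contribute $qa_{n-1}(i,j)$, giving $\sum_{j=1}^{i-1}a_{n-1}(i,j)+q\sum_{j=i+1}^{n-1}a_{n-1}(i,j)$.

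\emph{Main obstacle.} The heart of the matter is the case $\pi_3\le i-1$ of the range $i<j\le n-1$: one must pin down the precise shape of the block to be removed, confirm that its removal costs exactly $c+1$ descents (so that the power of $q$ is correct), identify the exact range of the new second letter $b$, and verify that reinserting such a block in each of the $\binom{i-a-1}{c}$ admissible ways returns a permutation avoiding $1243$ and $1342$. The structural lemma of \cite{MaSh2} carries most of this load; the genuinely new work is to check that the descent count emerging from each deletion is as asserted, and the boundary adjustment at $j=i+1$ is a small variant of the same argument.
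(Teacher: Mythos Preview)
Your proposal is correct and follows essentially the same approach as the paper: the paper does not give a standalone proof of this lemma but states that it is obtained by extending the recurrence of \cite{MaSh2} with the variable $q$ tracking descents, which is precisely what you outline. Your treatment of the boundary cases $j=n$ and $j=i+1$, and your identification of the descent bookkeeping in the $\pi_3\le i-1$ case as the only genuinely new work, match the paper's implicit argument.
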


Define $C(x,v)=\sum_{n\geq3}\sum_{i=1}^{n-1}a_n(i,i+1)v^ix^n$. Then Lemma \ref{12431342L1q}, together with \eqref{eqneg}, yields the following system:
\begin{align}
A^-(x,v,w)&=qv^2wx^2+\frac{vxq}{1-v)}A(x,vw,1)-\frac{v^2xq}{1-v}A(vx,w,1),\notag\\
A^+(x,v,w)&=vw^2x^2+wxA^-(wx,v,1)+qwxA^+(wx,v,1)+\frac{qwx}{1-w}(A^+(x,v,w)-A^+(wx,v,1))\notag\\
&+\frac{qv^2w^2x^2}{(qvwx+v-1)(qvwx+vw-1)}(A^+(\frac{vwx}{1-qvwx},1-qvwx,1)
-A^+(x,1-qvwx,\frac{vw}{1-qvwx}))\notag\\
&+\frac{qvw^2x^2}{(qvwx+v-1)(1-w)}(A^+(wx,v,1)-A^+(x,v,w))
+wC(x,vw)-qv^2w^3x^3\notag\\
&-qvw^2x^2A(vwx,1,1)\notag\\
&-\frac{qvw^2x^2}{qvwx+vw-1}(A^+(\frac{vwx}{1-qvwx},1-qvwx,1)
-A^+(x,1-qvwx,\frac{vw}{1-qvwx})),\notag\\
C(x,v)&=qv^2x^3+qvx^2A(vx,1,1)\label{C(x,v)eqn}\\
&+\frac{qvx^2}{qvx+v-1}(A^+(\frac{vx}{1-qvx},1-qvx,1)-A^+(x,1-qvx,\frac{v}{1-qvx}))\notag\\   &+xC(x,v)+vx^3  +qx^2A^+(x,1-qvx,\frac{v}{1-qvx}). \notag
\end{align}
Note that by \eqref{C(x,v)eqn} we have a formula for $C(x,v)$ in terms of $A$ and $A^+$.

By programming, one can show the solution of the preceding system is given by the following.

\begin{theorem}\label{Th1342x1243}
The generating function $A(x,v,w)$ is given by $A^+(x,v,w)+A^-(x,v,w)$, where
{\footnotesize\begin{align*}
&A^+(x,v,w)\\
&=\frac{(2v^2w+v-1+w(2qvw-2v^2w+qv-v^2-2v+1)x
-vw^2(2q^2vw-2qvw+2q-v-1)x^2+qv^2w^3(q-1)x^3)vw^2x^2t(vwx)}
{2((q-1)vwx^2-(2q-1)vwx+(2vw-1)(x-1))(q(q-1)vw^2x^2-qwx+(v-1)(wx-1))((q-1)vwx-1)}\\
&+\frac{(4vw-6v^2w+v-1+(1+(-2qw+3q-4w-5)v+(6-4qw-2q+10w)v^2+2w(2q-1)v^3)wx)vw^2x^2}
{2((q-1)vwx^2-(2q-1)vwx+(2vw-1)(x-1))(q(q-1)vw^2x^2-qwx+(v-1)(wx-1))((q-1)vwx-1)}\\
&+\frac{(4+(6q^2w-2q^2+7q-4w-9)v-(2w+1)(2q-1)v^2
-(4q^3vw-6q^2vw+2qvw+3q^2-2qv+q+v-3)vwx)v^2w^4x^4}
{2((q-1)vwx^2-(2q-1)vwx+(2vw-1)(x-1))(q(q-1)vw^2x^2-qwx+(v-1)(wx-1))((q-1)vwx-1)}\\
&+\frac{q(2q-1)(q-1)v^6w^6x^6}
{2((q-1)vwx^2-(2q-1)vwx+(2vw-1)(x-1))(q(q-1)vw^2x^2-qwx+(v-1)(wx-1))((q-1)vwx-1)},\\
&A^-(x,v,w)\\
&=\frac{(vw^2-(qv^2w^2+qvw^2-v^2w+v-w+1)x+(q^2vw^2-vw+q-w+1)vx^2-(q-1)qv^2wx^3)qv^2wx^2t(vwx)}{2(q(q-1)vwx^2-qx+(vw-1)(x-1))((q-1)vwx-1)(q(q-1)v^2wx^2-qvx+(vx-1)(w-1))}\\
&=\frac{(2vw-3vw^2+2w-2+((v+1)(2q+1)-((2q+1)v^2+2(q+v)+1)w
+v(v+1)(q+2)w^2+v^2(2q-1)w^3)x)qv^2wx^2}{2(q(q-1)vwx^2-qx+(vw-1)(x-1))((q-1)vwx-1)(q(q-1)v^2wx^2-qvx+(vx-1)(w-1))}\\
&-\frac{((1+q)(2q+1)+2q(v+1)(q-2)w-(2q^2v^2
-q^2v-2qv^2+2q^2-2qv+v^2-2q+1)w^2+qv(v+1)(2q-1)w^3)qv^3wx^4}{2(q(q-1)vwx^2-qx+(vw-1)(x-1))((q-1)vwx-1)(q(q-1)v^2wx^2-qvx+(vx-1)(w-1))}\\
&+\frac{(4q^3-q^2-2q+1-(2q^2-2q+1)(v+1)w+q^2v(2q-1)w^2
-wvq(q-1)(2q^2-2q+1)x)qv^4w^2x^5}{2(q(q-1)vwx^2-qx+(vw-1)(x-1))((q-1)vwx-1)(q(q-1)v^2wx^2-qvx+(vx-1)(w-1))}.\medskip
\end{align*}}

Moreover, $C(x,v)$ can be found by equation \eqref{C(x,v)eqn}.
\end{theorem}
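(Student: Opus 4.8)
The plan is to collapse the three functional equations preceding the statement into a single equation for $A^+(x,v,w)$, solve that by an iterated application of the kernel method, and then read off $A^-$ and $C$ for free. The equation for $C(x,v)$ is linear in $C(x,v)$, so one solves it at once, obtaining $C(x,v)=\frac{1}{1-x}(\cdots)$ in terms of $A(vx,1,1)$ and of the two shifted evaluations $A^+\bigl(\frac{vx}{1-qvx},1-qvx,1\bigr)$ and $A^+\bigl(x,1-qvx,\frac{v}{1-qvx}\bigr)$. Substituting $C(x,vw)$ back into the equation for $A^+$, and using $A^-(x,v,w)=A(x,v,w)-A^+(x,v,w)$ together with the explicit formula $A^-(x,v,w)=qv^2wx^2+\frac{qvx}{1-v}A(x,vw,1)-\frac{qv^2x}{1-v}A(vx,w,1)$ to clear every occurrence of $A^-$ and of $A$, leaves a single equation whose unknowns are $A^+(x,v,w)$, the auxiliary series $A^+(wx,v,1)$, the two $(1-qvwx)$-shifted evaluations, and the diagonal $A(x,1,1)$ (the last of these entering only linearly, since $A(x,v,1)=A^+(x,v,1)+A^-(x,v,1)$ is a linear combination of $A^+(x,v,1)$ and $A(vx,1,1)$).

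The feature that makes this tractable is that the substitution $(v,x)\mapsto\bigl(1-qvx,\frac{vx}{1-qvx}\bigr)$ is idempotent: it fixes $1-qvx$ and $\frac{vx}{1-qvx}$ and preserves the products $vx$ and $vw$, so the two shifted evaluations of $A^+$ that occur sit on a single fibre of this projection. Using this, I would treat $v$ as the catalytic variable attached to the shift $v\mapsto 1-qvwx$ and apply the kernel method: choosing $v$ to be the relevant root of the kernel (the source of the denominator factors $(q-1)vwx^2-(2q-1)vwx+(2vw-1)(x-1)$ and $q(q-1)vw^2x^2-qwx+(v-1)(wx-1)$, whose square roots account for the $t(vwx)$ appearing in the stated answer) eliminates the shifted evaluations and produces a relation among $A^+(x,v,w)$, $A^+(wx,v,1)$ and $A(x,1,1)$. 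A further kernel step, this time in $w$, then isolates a closed equation for $A(x,v,1)$ — here one must note that the $\frac{1}{1-w}$ factors are regular, the differences they multiply vanishing at $w=1$ — and a final kernel choice tied to the quadratic with discriminant $t(x)^2=(1-2q)^2x^2-2x(1+2q)+1$ pins down $A(x,1,1)$. At this point $A(x,v,1)$ is found to coincide with the formula of Theorem~\ref{Th11324x1342}, matching the prediction of Conjecture~1; as in the companion ``remaining cases'', this conjectured form may be used as an ansatz to shorten the bookkeeping rather than being reconstructed cold.

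With $A(\cdot,\cdot,1)$ and $A(\cdot,1,1)$ in hand, the equation for $A^+(x,v,w)$ is, once the shifted evaluations have been removed by the kernel step above, an explicitly solvable linear equation; solving it and back-substituting yields the displayed closed form for $A^+(x,v,w)$, after which $A^-(x,v,w)=A(x,v,w)-A^+(x,v,w)$ is immediate and $C(x,v)$ follows from \eqref{C(x,v)eqn}. The final step is to verify, by direct substitution, that the displayed formulas satisfy all three functional equations — this is the computation carried out ``by programming'', and is routine but heavy, the intermediate rational functions being very large.

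The main obstacle will be the iterated kernel method: because the two shifted arguments lie on the same projection fibre, one must keep careful track of which evaluations are genuinely independent unknowns at each stage, the elimination of $C$ and of $A^-$ generates bulky rational expressions, and the concluding verification that all three equations hold simultaneously (and that the $w=1$ specialization reproduces Theorem~\ref{Th11324x1342}) is a substantial symbolic computation rather than a hand calculation.
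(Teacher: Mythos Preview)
Your proposal takes a genuinely different route from the paper's. The paper does \emph{not} derive the closed form: it simply records the system of three functional equations (for $A^-$, $A^+$, and $C$) coming from Lemma~\ref{12431342L1q} and \eqref{eqneg}, states the answer, and declares that ``by programming, one can show the solution of the preceding system is given by the following.'' In other words, the paper's entire proof is the final verification step you describe in your last paragraph. Your attempt to \emph{construct} the answer via an iterated kernel method is more ambitious, and would if successful explain where the displayed formula comes from rather than merely confirming it.

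That said, the constructive portion of your plan is somewhat optimistic. After substituting $C(x,vw)$ into the $A^+$ equation, the unknowns are $A^+(x,v,w)$, $A^+(wx,v,1)$, $A^+\bigl(\tfrac{vwx}{1-qvwx},1-qvwx,1\bigr)$, $A^+\bigl(x,1-qvwx,\tfrac{vw}{1-qvwx}\bigr)$, and $A(vwx,1,1)$. Your idempotence observation is correct and useful --- the involution $(v,w)\mapsto\bigl(1-qvwx,\tfrac{vw}{1-qvwx}\bigr)$ (with $x$ fixed) sends $A^+(x,v,w)$ to the fourth unknown and $A^+(wx,v,1)$ to the third, while fixing the other three --- so one obtains a second equation in the same five quantities. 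But this is a pairing/involution trick, not a kernel cancellation; it yields two linear equations in five unknowns, and further specializations (e.g.\ $w=1$ together with a genuine kernel root in $v$) are still needed to close the system. Your description of a single kernel choice in $v$ that ``eliminates the shifted evaluations'' and produces directly a relation among $A^+(x,v,w)$, $A^+(wx,v,1)$, and $A(x,1,1)$ skips over this. It is plausible the elimination can be completed along these lines, but it will be more intricate than you sketch; since you intend to verify the final answer by direct substitution anyway, that verification already constitutes the paper's proof and does not depend on the derivation succeeding.
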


Hence,
\begin{align*}
A(x,v,1)&=\frac{xv(vqx-v-x)t(xv)}{2(vq^2x^2-vqx^2-qx+vx-v-x+1)(vqx-vx-1)}\\
&+\frac{(x+(qx^2+qx+3x^2-2x-1)v)vx}{2(vqx-vx-1)(vqx-1)(vq^2x^2-vqx^2-qx+vx-v-x+1)}\\
&-\frac{(2q^2x^2+3q^2x-qx^2-qx-3q+2x-1+(1-2q)(qx-1)vqx)v^3x^2}{2(vqx-vx-1)(vqx-1)(vq^2x^2-vqx^2-qx+vx-v-x+1)}+\frac{v^2qx^2}{1-vqx},
\end{align*}
which agrees with the prior cases.

\subsection{The case $(1243,1324)$.}

There is the following recurrence for $a_n(i,j)$ when $i<j$.

\begin{lemma}\label{1243,1324q}
We have
\begin{equation}\label{i(i+1)recq}
a_n(i,i+1)=a_{n-1}(i,i+1)+\sum_{a=1}^{i-1}\sum_{b=a+1}^i\sum_{c=0}^{i-b}q^{c+1}\binom{i-a-1}{c}a_{n-c-2}(a,b),\quad 1 \leq i \leq n-2,
\end{equation}
and
\begin{equation}\label{ijrecq}
a_n(i,j)=a_{n-1}(i,j)+\sum_{a=1}^{i-1}\sum_{c=0}^{i-a-1}q^{c+1}\binom{i-a-1}{c}a_{n-c-2}(a,j-c-1), \quad 3 \leq i+2\leq j \leq n-1,
\end{equation}
with $a_n(i,n)=\sum_{j=1}^{i-1}a_{n-1}(i,j)+q\sum_{j=i+1}^{n-1}a_{n-1}(i,j)$ for $1 \leq i \leq n-1$.
\end{lemma}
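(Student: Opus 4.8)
The plan is to prove the three recurrences combinatorially, by refining the decomposition of $\mathcal{S}_{n,i,j}(1243,1324)$ for $i<j$ that underlies the $q=1$ identities of \cite{MaSh2} so that it also records the number of descents. The governing structural fact is that both $1243$ and $1324$ begin with their smallest letter: hence if $\pi_1=i$, then the subsequence of $\pi$ formed by the letters exceeding $i$ must avoid both $132$ and $213$, since any such occurrence, prefixed by the initial $i$, would create a $1243$ or a $1324$. This forces that subsequence into a rigid shape --- its first letter is $\pi_2=j$, the letters of $[j+1,n]$ then appear in increasing order, and the letters of $[i+1,j-1]$ appear in a $\{132,213\}$-avoiding order --- and it restricts how the letters of $[i-1]$ may be interleaved. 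First I would extract exactly the consequences of this shape that are needed below, keeping track of descents as letters are deleted.

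I would handle the boundary recurrence for $a_n(i,n)$ first. Deleting $n$ from the second position is a bijection from $\mathcal{S}_{n,i,n}(1243,1324)$ onto the set of $(1243,1324)$-avoiding permutations of $[n-1]$ beginning with $i$, because the largest letter placed in position two cannot be the maximal entry of either forbidden pattern (that entry must occupy position $3$ or $4$). If the old second letter is $k$, then passing from $i\,k\cdots$ to $i\,n\,k\cdots$ trades the pair $(i,k)$ for the ascent $(i,n)$ and the descent $(n,k)$, so $\text{desc}$ is unchanged when $k<i$ and grows by one when $k>i$. Splitting the preimages according to whether $k<i$ or $k>i$, and applying \eqref{eqneg} in the second case, gives exactly $a_n(i,n)=\sum_{j=1}^{i-1}a_{n-1}(i,j)+q\sum_{j=i+1}^{n-1}a_{n-1}(i,j)$.

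For $a_n(i,i+1)$ and for $a_n(i,j)$ with $i+2\le j\le n-1$, I would follow the generating-tree style argument of \cite{MaSh2}. Let $a$ be the leftmost letter of $\pi$ smaller than $i$, which exists precisely when $i\ge 2$; thus for $i=1$ only the term $a_{n-1}(1,j)$ (respectively $a_{n-1}(1,2)$) remains, matching the empty sums. One shows that the letters of $\{a+1,\dots,i-1\}$ lying before the relevant breakpoint form an arbitrary subset, of size $c$, of those $i-a-1$ letters --- this produces the factor $\binom{i-a-1}{c}$ --- and that deleting $c+2$ letters of $\pi$ (the $c$ chosen ones together with two others determined by the configuration, the precise two depending on whether $j=i+1$) yields a $(1243,1324)$-avoiding permutation of $[n-c-2]$ with first letter $a$. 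The second letter of the image is $j-c-1$ in the second recurrence, and ranges over $b$ with $a<b\le i$ and $0\le c\le i-b$ in the first, the shift being the number of deleted letters below the surviving second entry and the inequality $c\le i-b$ recording which small letters can still precede it. The factor $q^{c+1}$ is the descent count: each of the $c$ deleted letters of $\{a+1,\dots,i-1\}$ is the top of a descent that disappears, and one more descent disappears at the junction created when the head of $\pi$ is removed. The remaining term $a_{n-1}(i,j)$ (respectively $a_{n-1}(i,i+1)$) accounts for the complementary configuration in which a single letter can be deleted with the first two letters and all descents preserved.

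The main obstacle, as for the analogous recurrences in the other cases, is this descent bookkeeping together with bijectivity: one must check that the map just described is a bijection onto the stated union of smaller classes \emph{and} that it lowers the number of descents by exactly $c+1$. The lower bound $c+1$ needs each deleted small letter to genuinely sit at the top of a descent in $\pi$; the upper bound $c+1$ needs tight control of the adjacencies among the deleted letters and the affected junctions so that no further descent is destroyed and none is created. Once this is verified in the two subcases $j=i+1$ and $j\ge i+2$, together with the separate treatments of $j=n$ and of $i=1$ above, the three displayed recurrences follow; the $q=1$ specialization of \cite{MaSh2} provides a convenient running check.
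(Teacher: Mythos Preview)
Your approach is essentially the same as the paper's: both extend the $q=1$ decomposition of \cite{MaSh2} by tracking how many descents are removed under each deletion, and the paper in fact offers no proof of this lemma beyond that one-line remark. Your treatment of the boundary case $j=n$ is complete (the appeal to \eqref{eqneg} is unnecessary there), and your outline for the two main recurrences correctly identifies the bijection, the role of the subset of $\{a+1,\dots,i-1\}$ giving $\binom{i-a-1}{c}$, and the source of the factor $q^{c+1}$, with the detailed verification deferred to \cite{MaSh2} just as in the paper.
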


Define $C(x,v)=\sum_{n\geq3}\sum_{i=1}^{n-1}a_n(i,i+1)v^ix^n$. Then Lemma \ref{1243,1324q}, together with \eqref{eqneg}, yields the following system:
\begin{align}
A^-(x,v,w)&=qv^2wx^2+\frac{qvx}{1-v}A(x,vw,1)-\frac{qv^2x}{1-v}A(vx,w,1),\notag\\
C(x,v)&=qv^2x^3+qvx^2A(vx,1,1)+xC(x,v)+vx^3\label{C(x,v)eqn2}\\
&+\frac{qx^2}{qvx+v-1}(vA^+(\frac{vx}{1-qvx},1-qvx,1)-(1-qvx)A^+(x,1-qvx,\frac{v}{1-qvx})),\notag\\
A^+(x,v,w)&=wxA^-(wx,v,1)+qwxA^+(wx,v,1)+vw^2x^2-qv^2w^3x^3-vw^2x^3+wC(x,vw)\notag\\
&-qvw^2x^2A(vwx,1,1)+xA^+(x,v,w)-wxC(x,vw)\notag\\
&+\frac{qwx^2}{qvwx+v-1}((1-qvwx)A^+(x,1-qvwx,\frac{vw}{1-qvwx})-vA^+(x,v,w)).\notag
\end{align}
Note that by \eqref{C(x,v)eqn2} we have a formula for $C(x,v)$ in terms of $A$ and $A^+$.

By programming, one can show the solution of the preceding system is given by the following.

\begin{theorem}\label{Th1324x1243}
The generating function $A(x,v,w)$ is given by $A^+(x,v,w)+A^-(x,v,w)$, where
{\footnotesize\begin{align*}
&A^+(x,v,w)\\
&=\frac{(2v^2w+v-1+w(v(q-v)(2w+1)-2v+1)x-vw^2(2q(q-1)vw+2q-v-1)x^2+qw^3v^2(q-1)x^3)vw^2x^2t(vwx)}
{2((q-1)vwx-1)((q-1)vwx^2-(2q+1)vwx+(2vw-1)(x-1))(q(q-1)vw^2x^2-qwx+(wx-1)(v-1))}\\
&+\frac{(4vw-6v^2w+v-1+(2(3-q)v^2+3qv-5v+1+2(2qv^2-2qv-v^2-q+5v-2)vw)wx)vw^2x^2}
{2((q-1)vwx-1)((q-1)vwx^2-(2q+1)vwx+(2vw-1)(x-1))(q(q-1)vw^2x^2-qwx+(wx-1)(v-1))}\\
&+\frac{q(7-2q)v+(1-2q)v^2-9v+4+2(3q^2-2qv+v-2)vw)v^2w^4x^4}
{2((q-1)vwx-1)((q-1)vwx^2-(2q+1)vwx+(2vw-1)(x-1))(q(q-1)vw^2x^2-qwx+(wx-1)(v-1))}\\
&+\frac{(2q(3q-2q^2-1)vw-3q^2+2qv-q-v+3+(2q-1)(q-1)qvwx)v^3w^5x^5}
{2((q-1)vwx-1)((q-1)vwx^2-(2q+1)vwx+(2vw-1)(x-1))(q(q-1)vw^2x^2-qwx+(wx-1)(v-1))},\\
&A^-(x,v,w)\\
&=\frac{(vw^2-(qv^2w^2+qvw^2-v^2w+v-w+1)x+v(q^2vw^2-vw+q-w+1)x^2-qv^2w(q-1)x^3)qv^2wx^2t(vwx)}
{2(q(q-1)v^2wx^2-qvx+(vx-1)(w-1))((q-1)vwx-1)(q(q-1)vwx^2-qx+(vw-1)(x-1))}\\
&+\frac{(2vw-3vw^2+2w-2
+((v+1)(1+2q)-(2qv^2+v^2+2q+2v+1)w+v(v+1)(2+q)w^2+v^2(2q-1)w^3)x)qv^2wx^2}
{2(q(q-1)v^2wx^2-qvx+(vx-1)(w-1))((q-1)vwx-1)(q(q-1)vwx^2-qx+(vw-1)(x-1))}\\
&+\frac{((1+q)(1+2q)+2q(v+1)(q-2)w+(q^2v+2q(1-q)v^2-2q^2+2qv-v^2+2q-1)w^2+qv(v+1)(2q-1)w^3)qv^3wx^4}
{2(q(q-1)v^2wx^2-qvx+(vx-1)(w-1))((q-1)vwx-1)(q(q-1)vwx^2-qx+(vw-1)(x-1))}\\
&+\frac{(4q^3-q^2-2q+1-(2q^2-2q+1)(v+1)w+q^2v(2q-1)w^2-(q-1)(2q^2-2q+1)qvwx)qv^4w^2x^5}
{2(q(q-1)v^2wx^2-qvx+(vx-1)(w-1))((q-1)vwx-1)(q(q-1)vwx^2-qx+(vw-1)(x-1))}.\medskip
\end{align*}}

Moreover, $C(x,v)$ can be found by equation \eqref{C(x,v)eqn2}.
\end{theorem}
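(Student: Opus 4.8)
The plan is to solve the system of functional equations assembled from \eqref{eqneg} and Lemma~\ref{1243,1324q} --- namely the explicit expression for $A^-(x,v,w)$, equation \eqref{C(x,v)eqn2} for $C(x,v)$, and the relation for $A^+(x,v,w)$ --- and then to verify that the rational expressions displayed in the statement satisfy it. Since $A^-(x,v,w)$ is already in closed form in terms of the boundary series $A(x,vw,1)$ and $A(vx,w,1)$, and $C(x,v)$ is expressed via \eqref{C(x,v)eqn2} through $A(vx,1,1)$ and two evaluations of $A^+$, the first step is to substitute these into the $A^+$-relation to obtain a single functional equation whose only unknown generating function is $A^+$, now appearing at $(x,v,w)$, at the shifted argument $\bigl(x,\,1-qvwx,\,\tfrac{vw}{1-qvwx}\bigr)$, at $\bigl(\tfrac{vwx}{1-qvwx},\,1-qvwx,\,1\bigr)$, and at $(wx,v,1)$, together with the boundary series $A(x,v,1)$ and $A(x,1,1)$.

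The input that makes the kernel method applicable is the observation that the argument map $T\colon(v,w)\mapsto\bigl(1-qvwx,\tfrac{vw}{1-qvwx}\bigr)$ preserves the product $vw$ and is idempotent, $T^2=T$; hence applying $T$ to the reduced equation yields a companion relation in which $A^+$ at $T(v,w)$ is paired only with itself and with boundary data, and the two can be combined to express $A^+\bigl(x,1-qvwx,\tfrac{vw}{1-qvwx}\bigr)$ explicitly. Substituting this back removes the shifted evaluation and collapses the system to an equation of the shape $\mathcal K(x,v,w)\,A^+(x,v,w)=\Phi\bigl(A^+(wx,v,1),\,A(x,1,1),\,A(x,v,1)\bigr)$ with an explicit kernel $\mathcal K$. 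Setting $v=v_0(x,w)$ to be the power-series root of $\mathcal K$ eliminates $A^+(x,v,w)$; specializing $w=1$ and choosing a further kernel root $v_1(x)$ --- whose defining quadratic has discriminant $t(x)^2$ --- then produces a closed equation for $A(x,1,1)$ and, in turn, for $A(x,v,1)$. At this stage one may either finish the $w=1$ computation directly or simply invoke the formula already established in Theorem~\ref{Th11324x1342}, which $A(x,v,1)$ must agree with; treating these two boundary series as known inputs considerably shortens the work, and the radical $t(vwx)$ in the final answer enters precisely through $A(x,vw,1)$ and through $v_0(x,w)$.

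Once $A(x,v,1)$ and $A(x,1,1)$ are in hand, back-substitution runs in reverse order: first recover $A^+(x,v,w)$ from the kernel relation, then $C(x,v)$ from \eqref{C(x,v)eqn2}, then $A^-(x,v,w)$ from its closed form, and finally $A(x,v,w)=A^+(x,v,w)+A^-(x,v,w)$. This produces candidate expressions, rational in $x,v,w,q$ and linear in the single radical $t(vwx)$, for all the series involved. The proof is completed by substituting the candidates for $A^+$, $A^-$ and $C$ back into each of the three functional equations and checking that every identity holds; after clearing the lone radical these are rational identities and can be confirmed mechanically --- this is the verification the paper performs ``by programming.''

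I expect the main obstacle to be bookkeeping rather than anything conceptual: the reduced equation couples $A^+$ at four distinct arguments, two of which additionally rescale the main variable via $x\mapsto\tfrac{x}{1-qvx}$ inside $C$, so decoupling them requires the correct order of specializations and repeated use of $T^2=T$, and the resulting closed forms are large enough that the final verification is trustworthy only with symbolic computation. A secondary point requiring care is the choice of branch for each kernel root $v_0(x,w)$ and $v_1(x)$: one must select the solution that behaves correctly at $x=0$ so that the substituted expressions remain admissible formal power series and no extraneous solution is introduced.
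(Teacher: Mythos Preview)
Your proposal is correct and aligns with the paper's approach: for this case the paper simply records the system of functional equations and says ``By programming, one can show the solution of the preceding system is given by the following,'' offering no derivation of the candidate expressions at all. Your outline of how to \emph{derive} the candidates---via the idempotent substitution $T\colon(v,w)\mapsto\bigl(1-qvwx,\tfrac{vw}{1-qvwx}\bigr)$ (which indeed satisfies $T^2=T$ and preserves $vw$) followed by the kernel method, with the option of borrowing $A(x,v,1)$ from Theorem~\ref{Th11324x1342} as an ansatz---goes beyond what the paper supplies, while your final step of mechanically verifying the rational identities after clearing $t(vwx)$ is exactly the paper's proof.
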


Hence,
\begin{align*}
A(x,v,1)&=\frac{xv(vqx-v-x)t(xv)}{2(vq^2x^2-vqx^2-qx+vx-v-x+1)(vqx-vx-1)}\\
&+\frac{(x+(qx^2+qx+3x^2-2x-1)v)vx}{2(vqx-vx-1)(vqx-1)(vq^2x^2-vqx^2-qx+vx-v-x+1)}\\
&-\frac{(2q^2x^2+3q^2x-qx^2-qx-3q+2x-1+(1-2q)(qx-1)vqx)v^3x^2}{2(vqx-vx-1)(vqx-1)(vq^2x^2-vqx^2-qx+vx-v-x+1)}+\frac{v^2qx^2}{1-vqx},
\end{align*}
as in the prior cases.

\end{document}